\newtheorem{definition}{Definition}
\newtheorem{remark}{Remark}
\newtheorem{example}{Example}
\newtheorem{theorem}{Theorem}
\newtheorem{proposition}{Proposition}
\newtheorem{lemma}{Lemma}
\newcommand{\N}{\mathbb{N}}
\newcommand{\Z}{\mathbb{Z}}
\newcommand{\R}{\mathbb{R}}
\newcommand{\inc}[1]{\hyperref[def:Inc]{$(\mathrm{Inc})_{#1}$}}
\newcommand{\dec}[1]{\hyperref[def:Dec]{$(\mathrm{Dec})_{#1}$}}
\title{On generalized Sobolev-Orlicz spaces associated to the Riesz fractional gradient}
\author{Pedro Miguel Campos\footnote{CMAFcIO – Departamento de Matemática, Faculdade de Ciências, Universidade de Lisboa P-1749-016 Lisboa, Portugal \\Email address: \texttt{pmcampos@fc.ul.pt}}}
\date{}
\begin{document}

\maketitle

\begin{abstract}
    We introduce a new family of function spaces, the fractional generalized Sobolev-Orlicz spaces $\Lambda^{s,A}_0(\Omega)$, where $A$ is a generalized $\Phi$-function satisfying the $(\mathrm{Inc})_{p}$ and $(\mathrm{Dec})_{q}$ conditions for $1<p\leq q<\infty$, as an extension of the Lions-Calderón spaces (also known as Bessel potential spaces) $\Lambda^{s,p}_0(\Omega)$ when $0<s<1$ to the generalized Orlicz framework. We obtain some continuous and compact embeddings for these spaces and study the continuous dependence of the Riesz fractional gradient $D^s$ with respect to  $s\in[0,1]$ as $s\to \sigma\in[0,1]$. Finally, we apply these results to study the existence, uniqueness and continuous dependence of a family of partial differential equations depending on the Riesz fractional gradient as $s\to\sigma\in(0,1]$.
\end{abstract}

\section{Introduction}
In the last years there has been some interest in the study of fractional differential operators, and their applications. One such operator is the Riesz fractional gradient $D^s$, originally defined in \cite{shieh2015on, silhavy2020Fractional} and further studied in \cite{bellido2020gamma, comi2019BlowUp, comi2022asymptoticsI, brue2022AsymptoticsII, kreisbeck2022Quasiconvexity}. This operator is, up to multiplication by a constant, the only that transforms scalar-valued into vector-valued functions such that it is rotationally and translationally invariant, $s$-homogeneous and continuous in the sense of distributions (see \cite{silhavy2020Fractional}). Moreover, they generate the fractional Laplacian like the classical gradient generates the classical Laplacian, i.e.,
\begin{equation}
-D^s\cdot D^s u=(-\Delta)^s u.
\end{equation}
These properties suggest that $D^s$ is a good generalization of the classical gradient $D$ when $s<1$, therefore, we make the identification of $D^1=D$. On the other hand, when $s=0$, one makes the identification $D^0=-R$ where $R$ is the vector-valued Riesz transform, see \cite{brue2022AsymptoticsII}.
Due to these properties, some authors have suggested, for example in \cite{bellido2020Piola}, that such operators might be useful to study certain phenomena in physics, namely in nonlocal theories of elasticity like peridynamics. 
This led to some interesting work regarding the existence of solutions to fractional partial differential equations, associated to potentials in \cite{bellido2020gamma} and more general operators in \cite{campos2023unilateral}. 
However, it is important to remark that all the work concerning these operators that has been done, at least to the author’s knowledge, revolves around the classic Lebesgue spaces and their corresponding fractional version of the Sobolev spaces, $\Lambda^{s,p}_0(\Omega)$ (some authors also use the notation $H^{s,p}_0(\Omega)$). 
But just like in the classical case where Sobolev-Orlicz spaces $W^{1,A}_0(\Omega)$ are used to study nonhomogeneous elastic materials, it also makes sense to generalize these spaces to the fractional setting so that it can be eventually used to study such materials in the nonlocal framework mentioned above. 

The goal of this article is to provide a new functional analytic framework to solve nonlinear partial differential equations that make use of the Riesz fractional gradient.
To do so, in Section \ref{sec:generalized_orlicz_spaces} we recall the notion of $\Phi$-functions and introduce some of the technical hypothesis, that we further impose on this class of functions, based on \cite{harjulehto2019orlicz, harjulehto2023revised}.
In particular, we assume that our $\Phi$-functions are \inc{p} and \dec{q} for some $1<p\leq q<\infty$, and that they satisfy three technical hypothesis \hyperref[def:a0]{(A0)}, \hyperref[def:a1]{(A1)} and \hyperref[def:a2]{(A2)}. 
The \inc{p} and \dec{q} describe the growth of the $\Phi$-function and its conjugate, and are equivalent to the $\Delta_2$ and $\nabla_2$ conditions, while the other three assumptions are important to apply arguments of harmonic analysis to analyze these functions.
After that, we recall the notion of generalized Orlicz spaces and some of its properties. 
In Section \ref{sec:fractional_generalized_sobolev_orlicz_spaces} we introduce the fractional generalized Sobolev-Orlicz spaces $\Lambda^{s,A}_0(\Omega)$ which correspond to $W^{1,A}_0(\Omega)$ when $s=1$, and to $\Lambda^{s,p}_0(\Omega)$ when the $\Phi$-function is $A(r)=\frac{1}{p}r^{p}$.
Then, we provide results concerning the continuous and compact embeddings, which include fractional versions of the Sobolev and Poincaré inequalities, as well as the continuous dependence of $D^s$ with respect to the fractional parameter $s$. 
Finally, in Section \ref{sec:dirichlet_problem} we apply the results proved in the previous sections to study the existence of solutions in $\Lambda^{s,A}_0(\Omega)$ to the problem
\begin{equation}\label{eq:dirichlet_problem}
    \begin{cases}
        -D^s\cdot(a(x,D^s u)D^s u)=F & \mbox{ in }\Omega\\
        u=0 & \mbox{ on } \R^d\setminus \Omega,
    \end{cases}
\end{equation}
where $a$ is related to $A$ by the following identity
\begin{equation}\label{eq:relation_between_a_and_A}
    A(x,\ell)=\int_0^\ell{a(x,r)r}\,dr,
\end{equation}
as well as the continuous dependence of these solutions with respect to $s$, as $s\to\sigma\in(0,1]$. These generalize the existence and continuous dependence results obtained in \cite{campos2023unilateral} for equations in the $\Lambda^{s,p}_0(\Omega)$-framework.

\section{Generalized Orlicz spaces}\label{sec:generalized_orlicz_spaces}
We start this section by recalling the definition of the class of $\Phi$-functions. This class of functions, appears in the literature under the name of N-functions or Young functions, but we have decided to use the term $\Phi$-function because it is the terminology of \cite{harjulehto2019orlicz, harjulehto2023revised} which are the two main references that we follow for further assumptions on these functions throughout this text. 
Since for our analysis, we use the hypothesis that are expressed in \cite{harjulehto2019orlicz}, we also use their nomenclature of generalized strong $\Phi$-function (or simply $\Phi$-function), to address these functions. 

\begin{definition}[$\Phi$-function] Let $\Omega\subset\R^d$ be an open set. A function $A:\Omega\times[0,\infty)\to[0,\infty)$ is called a (generalized) $\Phi$-function, and is denoted $A\in\Phi(\Omega)$, if:
    \begin{itemize}
        \item[i)] $x\mapsto A(x,|f(x)|)$ is measurable in $\Omega$ for every measurable (scalar or vector-valued) function $f$ defined on $\Omega$;
        \item[ii)] $A(x,\cdot)$ is increasing, convex and satisfies $A(x,0)=0$, $\lim_{\ell\to0^+}{A(x,\ell)}=0$ $\lim_{\ell\to+\infty}{A(x,\ell)}=\infty$ for a.e. $x\in\Omega$.
    \end{itemize}
\end{definition}

\begin{remark}
    There are weaker notions of $\Phi$-functions, namely weak and convex $\Phi$-functions.
    These generalizations, which we do not make use of here, are useful when one wants to consider discontinuous functions $A$. 
    For more information, see \cite[Definition 2.5.2]{harjulehto2019orlicz} and the discussion that follows.
\end{remark}

Throughout this manuscript, we require further hypothesis on the $\Phi$-functions.
We collect them in the following definition.

\begin{definition}[Hypothesis on $A$]
    Let $\Omega\subset \R^d$ be an open set, $A\in\Phi(\Omega)$ and $p>1$ and $q<\infty$. Let $A^{-1}(x,\ell)$ denotes the left inverse of $A$, i.e.,
    \begin{equation*}
        A^{-1}(x,r):=\inf\{\ell\geq 0:\, A(x,\ell)\geq r\}.
    \end{equation*}
    We say that $A$ satisfies:
    \begin{itemize}
        \item[$(\mathrm{Inc})_p$]\label{def:Inc} if function $\ell\mapsto \frac{A(x,\ell)}{\ell^p}$ is increasing for a.e. $x\in\Omega$;
        \item[$(\mathrm{Dec})_q$]\label{def:Dec} if the function $\ell\mapsto \frac{A(x,\ell)}{\ell^q}$ is decreasing for a.e. $x\in\Omega$;
        \item[(A0)]\label{def:a0} if there exists a constant $\beta\in (0,1]$ such that $A(x,\beta)\leq 1\leq A(x,1/\beta)$ for a.e. $x\in\Omega$;
        \item[(A1)]\label{def:a1} if there exists a constant $0<\beta\leq 1$ such that
        \begin{equation*}
            \beta A^{-1}(x,\ell)\leq A^{-1}(y,\ell)
        \end{equation*}
        for every $1\leq \ell\leq 1/|B|$, a.e. $x,y\in B\cap\Omega$ and every ball $B$ with $|B|\leq 1$;
        \item[(A2)]\label{def:a2} if for every $\sigma>0$ there exists $\beta\in(0,1]$ and $h\in L^1(\Omega)\cap L^\infty(\Omega)$ with $h\geq 0$, such that for a.e. $x,y\in\Omega$,
        \begin{equation*}
            \beta A^{-1}(x,\ell)\leq A^{-1}(y,\ell+h(x)+h(y))
        \end{equation*}
        whenever $0\leq\ell\leq\sigma$.
    \end{itemize}
\end{definition}

The hypothesis \hyperref[def:Inc]{$(\mathrm{Inc})_p$} and \hyperref[def:Dec]{$(\mathrm{Dec})_q$} are linked to the the growth of the $\Phi$-functions which are important for the study of partial differential equations that depend on them. 
In fact, they are related with the well-known $\Delta_2$-condition. 
More precisely, any $\Phi$-function $A$ that satisfies the growth condition \hyperref[def:Dec]{$(\mathrm{Dec})_q$} for some $q<\infty$, also satisfies the $\Delta_2$-condition, i.e., there exists $K\geq 2$ such that 
\begin{equation*}
    A(x,2\ell)\leq KA(x,\ell) \qquad\mbox{ for a.e. } x\in\Omega \mbox{ and all } \ell\geq 0.
\end{equation*}
Similarly, if $A$ satisfies \inc{p} for some $p>1$, its conjugate, i.e. the function
\begin{equation}\label{eq:convex_conjugate}
    A'(x,\ell):=\sup\{r\ell-A(x,r):\, r\geq 0\},
\end{equation}
satisfies the $\Delta_2$-condition. 
Moreover, it is prove in \cite[Lemma 2.2.6 (b)]{harjulehto2019orlicz} and \cite[Corollary 2.4.11]{harjulehto2019orlicz}, that both implications are in fact equivalences, that is, if $A$ satisfies the $\Delta_2$-condition then it is \dec{q} for some $q<\infty$, and if $A'$ satisfies the $\Delta_2$-condition then $A$ is \inc{p} for some $p>1$.

\begin{remark}
    For applications to partial differential equations, one usually considers $\Phi$-functions $A$ that are related to a function $a(x,r)$  through \eqref{eq:relation_between_a_and_A}. In this case, if $a$ is measurable and bounded function in $x$ for all $r>0$ and Lipschitz continuous in $r$ for a.e. $x\in\Omega$, and there exist constants $a_-,a_+>0$ such that
    \begin{equation}\label{eq:hypothesis_on_a}
        a_-\leq \frac{r \partial_r a(x,r)}{a(x,r)}+1\leq a_+,
    \end{equation}
    then $A$ is \inc{a_- +1} and \dec{a_+ +1}. This well-known result is possible to prove by multiplying each term of \eqref{eq:hypothesis_on_a} by $r$, integrating with respect to $r$ ranging from $0$ to $\ell$, and then comparing with the derivatives with respect to $\ell$ of $\frac{A(x,\ell)}{\ell^{a_- +1}}$ and of $\frac{A(x,\ell)}{\ell^{a_+ +1}}$. Moreover, through the process  of this proof one also obtains that
    \begin{equation}\label{eq:pointwise_relationship_between_a_and_A}
        (a_- + 1)A(x,\ell)\leq \ell^2 a(x,\ell)\leq (a_+ + 1)A(x,\ell).
    \end{equation}
\end{remark}

The condition \hyperref[def:a0]{(A0)} restricts our $\Phi$-functions to the ``unweighted'' case. 
To better understand what this means, we note that when one is dealing with the $\Phi$-function $A(x,\ell)=\alpha(x)\ell^p$, this hypothesis imposes that the weight $\alpha(x)$ is comparable to $1$, i.e., there exists a constant $C>1$ such that $1/C\leq \alpha(x)\leq C$ for a.e. $x\in\R^d$.

The last two hypothesis \hyperref[def:a1]{(A1)} and \hyperref[def:a2]{(A2)} are technical conditions needed to apply results from harmonic analysis to the study of $\Phi$-functions.
In particular, they help us establish continuous and compact embeddings for the fractional generalized Sobolev-Orlicz spaces that we introduce in the next section.
It is however important to remark that the definition that we give for \hyperref[def:a2]{(A2)} is the one given in \cite{harjulehto2023revised} and not the one from \cite{harjulehto2019orlicz}. 
The reason why we do this is because according to \cite{harjulehto2023revised}, the definition of (A2) in \cite{harjulehto2019orlicz} contains a flaw.
Nevertheless, the results from \cite{harjulehto2019orlicz} that we make use of, hold true for the definition of \hyperref[def:a2]{(A2)} given here.

This list of hypothesis, allow us to work with the following families of $\Phi$-functions:

\begin{example}[Variable order exponent]
    Consider a function $p(x):\R^d\to [1,\infty]$ such that $p^-=\mathrm{ess\,inf}_{x\in\R^d}{p(x)}>1$ and $p^+=\mathrm{ess\, sup}_{x\in\R^d}{p(x)}<\infty$. Assume also that $1/p$ is log-Hölder continuous and satisfies the Nekvinda's decay condition, i.e.,
    \begin{equation*}
        \left|\frac{1}{p(x)}-\frac{1}{p(y)}\right|\leq \frac{C}{\log{(e+\frac{1}{|x-y|})}},\quad\forall x\neq y \mbox{ in } \R^d
    \end{equation*}
    and
    \begin{equation*}
        \exists c>0 \mbox{ and } p_\infty\in[1,\infty]: \quad\int_{\{p(x)\neq p_\infty\}}{c^\frac{1}{|\frac{1}{p(x)}-\frac{1}{p_\infty}|}}\,dx<\infty
    \end{equation*}
    respectively.
    Assume that $a(x,r)=\alpha(x)p(x)r^{p(x)-2}$ with $\frac{1}{c}\leq \alpha(x)\leq c$ for some $c\geq 1$ for a.e. $x\in\R^d$. Then $A(x,\ell)=\alpha(x)\ell^{p(x)}\in\Phi(\R^d)$ satisfies \inc{p_-}, \dec{p_+}, \hyperref[def:a0]{(A0)}, \hyperref[def:a1]{(A1)} and \hyperref[def:a2]{(A2)} by \cite[Lemma 7.1.1, Proposition 7.1.2 and Proposition 7.1.3]{harjulehto2019orlicz}.
\end{example}

\begin{example}[Logarithmic perturbation of the variable order exponent]
    Assuming the same hypothesis on $p(\cdot)$ as in previous example, and let $a(x,r)=p(x)r^{p(x)-2}\log{(e+r)}+t^{p(x)-1}/(e+t)$. Then, $A(x,\ell)=\ell^{p(x)}\log{(e+r)}\in\Phi(\R^d)$ satisfies \inc{p_-}, \dec{p_+}, \hyperref[def:a0]{(A0)}, \hyperref[def:a1]{(A1)} and \hyperref[def:a2]{(A2)} as one can see in \cite[Table 7.1]{harjulehto2019orlicz}.  
\end{example}

\begin{example}[Double phase]
    Let $a(x,r)=pr^{p-2}+q\alpha(x)r^{q-2}$ for some $1<p<q<\infty$ and with $\alpha\in L^\infty(\R^d)\cap C^{\frac{d}{p}(q-p)}(\R^d)$. Then $A(x,\ell)=\ell^{p}+\alpha(x)\ell^{q}$ satisfies \inc{p}, \dec{q}, \hyperref[def:a0]{(A0)}, \hyperref[def:a1]{(A1)} and \hyperref[def:a2]{(A2)} by \cite[Propositions 7.2.1 and 7.2.2]{harjulehto2019orlicz}.
\end{example}

\vspace{2mm}

With this short presentation of the $\Phi$-functions that important for this work, we recall the definition of the generalized Orlicz spaces.

\begin{definition}[Generalized Orlicz space]
    Let $\Omega\subset \R^d$ be an open set and $A\in \Phi(\Omega)$. We define the generalized Orlicz space
    \begin{equation*}
        L^A(\Omega)=\{f:\Omega\to\R \mbox{ measurable}:\, \int_{\Omega}{A(x,|\rho f(x)|)}\,dx<\infty \mbox{ for some } \rho >0\}.
    \end{equation*}
    We endow this space with the Luxemburg norm
    \begin{equation*}
        \|f\|_{L^A(\Omega)}=\inf\left\{\rho>0:\int_{\Omega}{A\left(x,\left|\frac{f(x)}{\rho}\right|\right)}\,dx\leq 1\right\}
    \end{equation*}
    When dealing with vector-valued functions, we use the notation $L^A(\Omega;\R^d)$ for the respective generalized Orlicz space.
\end{definition}

Since we assume that $A$ and $A'$ satisfy the $\Delta_2$ condition, there is a relationship between the modular $J_A$ and the norm $\|\cdot\|_{L^A(\R^d)}$.

\begin{lemma}\label{lemma:relation_norm_modular}
    Let $A\in\Phi(\R^d)$, satisfying \hyperref[def:Inc]{$(\mathrm{Inc})_p$} and \hyperref[def:Dec]{$(\mathrm{Dec})_q$} with $1<p< q<\infty$. Then, for every $\xi\in L^A(\R^d)$,
    \begin{equation}\label{eq:relation_norm_modular}
        \min\left\{J_A(\xi)^{1/p}, J_A(\xi)^{1/q}\right\}\leq \|\xi\|_{L^A(\R^d)}\leq \max\left\{J_A(\xi)^{1/p}, J_A(\xi)^{1/q}\right\}\leq J_A(\xi)+1
    \end{equation}
    and 
    \begin{equation}\label{eq:reverse_relation_norm_modular}
         \frac{1}{2}\min\{\|\xi\|^{p}_{L^A(\R^d)},\|\xi\|^q_{L^A(\R^d)}\}\leq J_A(\xi)\leq 2 \max\left\{\|\xi\|^p_{L^A(\R^d)}, \|\xi\|^q_{L^A(\R^d)}\right\}
    \end{equation}
    where $J_A(\xi)=\int_{\R^d}{A(x,|\xi|)}\,dx$. The result is also valid for all $\xi\in L^A(\R^d;\R^d)$ with the suitable adaptations.
\end{lemma}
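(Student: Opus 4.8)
The strategy is to reduce the statement to two elementary facts: the pointwise ``homogeneity'' estimates carried by \inc{p} and \dec{q}, and the unit-ball property of the Luxemburg norm.

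First I would record the homogeneity bounds. Since $\ell\mapsto A(x,\ell)/\ell^{p}$ is increasing and $\ell\mapsto A(x,\ell)/\ell^{q}$ is decreasing, for a.e.\ $x$, all $\ell\ge 0$ and all $\lambda\ge 1$ one has
\begin{equation*}
\lambda^{p}A(x,\ell)\le A(x,\lambda\ell)\le \lambda^{q}A(x,\ell),
\end{equation*}
while for $0<\lambda\le 1$ both inequalities reverse. Integrating in $x$, the same holds with $J_A$ in place of $A(x,\cdot)$. Moreover, since \dec{q} is equivalent to the $\Delta_2$-condition, $J_A(\xi)<\infty$ for every $\xi\in L^{A}(\R^d)$, so all quantities appearing below are finite; and under $\Delta_2$ the modular is continuous, whence the standard unit-ball property $\|\eta\|_{L^A(\R^d)}\le 1\iff J_A(\eta)\le 1$ holds, together with $\|\eta\|_{L^A(\R^d)}=1\iff J_A(\eta)=1$ for $\eta\ne 0$.

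To prove \eqref{eq:reverse_relation_norm_modular}, write $\|\xi\|:=\|\xi\|_{L^A(\R^d)}$; the case $\xi=0$ is trivial, so assume $\|\xi\|>0$, which by the above forces $J_A(\xi/\|\xi\|)=1$. Applying the homogeneity bounds for $J_A$ with $\lambda=\|\xi\|$ to the function $\xi/\|\xi\|$ gives $\|\xi\|^{p}\le J_A(\xi)\le\|\xi\|^{q}$ when $\|\xi\|\ge 1$, and $\|\xi\|^{q}\le J_A(\xi)\le\|\xi\|^{p}$ when $\|\xi\|\le 1$; in either regime $\min\{\|\xi\|^{p},\|\xi\|^{q}\}\le J_A(\xi)\le\max\{\|\xi\|^{p},\|\xi\|^{q}\}$, which is even slightly stronger than claimed. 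For \eqref{eq:relation_norm_modular} I would run the symmetric argument, testing the Luxemburg norm with the scaling factor $J_A(\xi)^{1/p}$ or $J_A(\xi)^{1/q}$. If $J_A(\xi)\ge 1$ then $\lambda:=J_A(\xi)^{-1/p}\le 1$, and the reversed \inc{p} bound yields $J_A(\lambda\xi)\le\lambda^{p}J_A(\xi)=1$, i.e.\ $\|\xi\|\le J_A(\xi)^{1/p}$; likewise $\mu:=J_A(\xi)^{-1/q}\le 1$ and the reversed \dec{q} bound gives $J_A(\mu\xi)\ge\mu^{q}J_A(\xi)=1$, hence $\|\xi\|\ge J_A(\xi)^{1/q}$ via the unit-ball property. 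The case $J_A(\xi)<1$ is identical after swapping the roles of $p$ and $q$, so $\min\{J_A(\xi)^{1/p},J_A(\xi)^{1/q}\}\le\|\xi\|\le\max\{J_A(\xi)^{1/p},J_A(\xi)^{1/q}\}$, and the last inequality in \eqref{eq:relation_norm_modular} follows from $\max\{t^{1/p},t^{1/q}\}\le\max\{t,1\}\le t+1$ for $t\ge 0$. The vector-valued version needs no change, since $A$ depends on $\xi$ only through $|\xi|$.

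I do not expect a genuine obstacle here: the only ingredient that is not pure bookkeeping is the unit-ball property together with continuity of the modular, and both are standard consequences of the convexity of $A(x,\cdot)$ and the $\Delta_2$-condition, available from \cite{harjulehto2019orlicz}. The one thing to be careful with is consistently tracking which of the two homogeneity inequalities applies, according to whether the scaling factor in play is $\ge 1$ or $\le 1$.
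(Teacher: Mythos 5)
Your proof is correct, but it takes a more self-contained route than the paper. The paper simply cites \cite[Lemma 3.2.9]{harjulehto2019orlicz} for the two leftmost inequalities in \eqref{eq:relation_norm_modular} and then obtains the rightmost inequality and \eqref{eq:reverse_relation_norm_modular} by the elementary case analysis $J_A(\xi)\leq 1$ versus $J_A(\xi)\geq 1$. You instead re-derive the cited estimate from scratch: you extract the scaling bounds $\lambda^{p}A(x,\ell)\leq A(x,\lambda\ell)\leq\lambda^{q}A(x,\ell)$ for $\lambda\geq 1$ (reversed for $\lambda\leq 1$) from \inc{p} and \dec{q}, combine them with the unit-ball property of the Luxemburg norm (valid here thanks to the $\Delta_2$-condition implied by \dec{q}, which also guarantees $J_A(\xi)<\infty$ on all of $L^A(\R^d)$), and test with the scalings $J_A(\xi)^{-1/p}$, $J_A(\xi)^{-1/q}$ and $\|\xi\|_{L^A(\R^d)}$ respectively; your bookkeeping of the regimes $\lambda\gtrless 1$ and $J_A(\xi)\gtrless 1$ is consistent throughout. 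What this buys you is independence from the external lemma and, as you note, a slightly sharper form of \eqref{eq:reverse_relation_norm_modular}, namely $\min\{\|\xi\|^{p}_{L^A(\R^d)},\|\xi\|^{q}_{L^A(\R^d)}\}\leq J_A(\xi)\leq\max\{\|\xi\|^{p}_{L^A(\R^d)},\|\xi\|^{q}_{L^A(\R^d)}\}$ without the factors $\tfrac12$ and $2$ (which the paper's statement includes only as harmless slack); what the paper's route buys is brevity, since the heavy lifting is delegated to the reference. The only points worth making explicit in a final write-up are the degenerate cases ($\xi=0$, equivalently $J_A(\xi)=0$, which is immediate since \inc{p} and \dec{q} together force $A(x,\ell)>0$ for $\ell>0$) and the precise justification of $J_A(\xi/\|\xi\|_{L^A(\R^d)})=1$, which indeed follows from \dec{q} exactly as you indicate.
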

\begin{proof}
    For the two leftmost inequalities of \eqref{eq:relation_norm_modular} see \cite[Lemma 3.2.9]{harjulehto2019orlicz}. The rightmost inequality of \eqref{eq:relation_norm_modular} follows from the fact that when $J_A(\xi)\leq 1$ we have 
    \begin{equation*}
        \max\left\{J_A(\xi)^{1/p}, J_A(\xi)^{1/q}\right\}\leq J_A(\xi)^{1/q}\leq 1
    \end{equation*}
    and when $J_A(\xi)\geq 1$ we have 
    \begin{equation*}
        \max\left\{J_A(\xi)^{1/p}, J_A(\xi)^{1/q}\right\}\leq J_A(\xi)^{1/p}\leq J_A(\xi).
    \end{equation*}
    The same argument of analysing the cases in which $J_A(\xi)$ is smaller or bigger than one, allows us to obtain \eqref{eq:reverse_relation_norm_modular} from \eqref{eq:relation_norm_modular}.
\end{proof}
\begin{remark}
    When the $\Phi$-function $A$ satisfies \inc{p} and \dec{q} with $p=q$, then it follows that $A(x,r)=Cr^p$ for some constant $C>0$. In this case, the we have $\|f\|^p_{L^p(\R^d)}=\left(\int_\Omega{|f|^p}\,dx\right)^{1/p}$ which is a stronger relationship between the Luxemburg and the modular than the one obtained in Lemma \ref{lemma:relation_norm_modular} for the general case.
\end{remark}

Another consequence of the growth conditions \inc{p} and \dec{q} imposed on $A$ is that it allows us to compare the generalized Orlicz spaces with the usual Lebesgue spaces.

\begin{lemma}\label{lemma:relation_classical_and_orlicz}
    Let $A\in \Phi(\R^d)$ satisfy \hyperref[def:a0]{(A0)}, \hyperref[def:Inc]{$(\mathrm{Inc})_p$} and \hyperref[def:Dec]{$(\mathrm{Dec})_q$} for some $1<p< q< \infty$. Then
    \begin{equation*}
        L^p(\R^d)\cap L^q(\R^d)\subset L^A(\R^d)\subset L^p(\R^d)+L^q(\R^d).
    \end{equation*}
    Moreover, if $A\in \Phi(\R^d)$ only satisfies \hyperref[def:a0]{(A0)} and \hyperref[def:Inc]{$(\mathrm{Inc})_p$}, and we restrict it to a bounded open set $\Omega\subset\R^d$, then
    \begin{equation*}
        L^q(\Omega)\subset L^A(\Omega)\subset L^p(\Omega).
    \end{equation*}
    Both chains of inclusions are also true in the vector-valued case.
\end{lemma}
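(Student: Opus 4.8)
The plan is to reduce everything to pointwise two‑sided comparisons between $A(x,\ell)$ and the power functions $\ell^{p}$ and $\ell^{q}$, with constants independent of $x$, and then to pass from these modular estimates to the claimed inclusions by decomposing a function along the level sets $\{|f|\le 1\}$ and $\{|f|>1\}$. Recall that \inc{p} and \dec{q} say precisely that $\ell\mapsto A(x,\ell)/\ell^{p}$ is increasing and $\ell\mapsto A(x,\ell)/\ell^{q}$ is decreasing; \hyperref[def:a0]{(A0)} will be used to pin $A(x,\cdot)$ down at a fixed abscissa so that these monotonicities become genuine power bounds.

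First I would prove the pointwise estimates. Fix $\beta\in(0,1]$ as in \hyperref[def:a0]{(A0)}, so $A(x,\beta)\le 1\le A(x,1/\beta)$ for a.e.\ $x\in\R^{d}$. Comparing the decreasing ratio $A(x,\cdot)/(\cdot)^{q}$ at $\beta$ and at $1$ gives $A(x,1)\le\beta^{-q}A(x,\beta)\le\beta^{-q}$, and comparing it at $1$ and at $1/\beta$ gives $A(x,1)\ge\beta^{q}A(x,1/\beta)\ge\beta^{q}$; hence $\beta^{q}\le A(x,1)\le\beta^{-q}$ for a.e.\ $x$. Comparing now $A(x,\cdot)/(\cdot)^{p}$ (resp.\ $A(x,\cdot)/(\cdot)^{q}$) at $\ell$ and at $1$ and inserting these bounds on $A(x,1)$, one obtains
\[ \beta^{q}\,\ell^{p}\le A(x,\ell)\le\beta^{-q}\,\ell^{q}\ \ (\ell\ge 1),\qquad \beta^{q}\,\ell^{q}\le A(x,\ell)\le\beta^{-q}\,\ell^{p}\ \ (0\le\ell\le 1), \]
for a.e.\ $x\in\R^{d}$.

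The inclusions on $\R^{d}$ are then immediate. If $f\in L^{p}(\R^{d})\cap L^{q}(\R^{d})$, splitting and using the upper bounds,
\[ J_{A}(f)=\int_{\{|f|\le 1\}}A(x,|f|)\,dx+\int_{\{|f|>1\}}A(x,|f|)\,dx\le\beta^{-q}\bigl(\|f\|_{L^{p}(\R^{d})}^{p}+\|f\|_{L^{q}(\R^{d})}^{q}\bigr)<\infty, \]
so $\rho=1$ is admissible in the definition of $L^{A}$ and $f\in L^{A}(\R^{d})$. Conversely, if $f\in L^{A}(\R^{d})$, pick $\rho>0$ with $J_{A}(\rho f)<\infty$ and write $g:=\rho f=g\,\mathbf{1}_{\{|g|>1\}}+g\,\mathbf{1}_{\{|g|\le 1\}}=:g_{1}+g_{2}$; the lower bounds give $\int_{\R^{d}}|g_{1}|^{p}\,dx\le\beta^{-q}J_{A}(g)<\infty$ and $\int_{\R^{d}}|g_{2}|^{q}\,dx\le\beta^{-q}J_{A}(g)<\infty$, whence $g\in L^{p}(\R^{d})+L^{q}(\R^{d})$ and $f=\rho^{-1}g\in L^{p}(\R^{d})+L^{q}(\R^{d})$. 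Since only $|f(x)|$ enters, the vector‑valued statements follow verbatim.

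For the bounded case, since $1<p<q<\infty$ one has $L^{p}(\Omega)\cap L^{q}(\Omega)=L^{q}(\Omega)$ and $L^{p}(\Omega)+L^{q}(\Omega)=L^{p}(\Omega)$, so $L^{q}(\Omega)\subset L^{A}(\Omega)\subset L^{p}(\Omega)$ is the restriction of the previous chain to $\Omega$; moreover $L^{A}(\Omega)\subset L^{p}(\Omega)$ already follows from \hyperref[def:a0]{(A0)} and \inc{p} alone, since $A(x,\ell)\ge\beta^{p}\ell^{p}$ for $\ell\ge 1/\beta$ (comparing $A(x,\cdot)/(\cdot)^{p}$ at $\ell$ and $1/\beta$) handles the super‑level set, while on its complement $|f|^{p}\le\beta^{-p}$, which is integrable because $|\Omega|<\infty$. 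The only delicate point is the pointwise step: \hyperref[def:a0]{(A0)} controls $A(x,\cdot)$ only at $\beta$ and $1/\beta$, so one must propagate this to the whole rays $[0,1]$ and $[1,\infty)$ through the monotonicity of the rescaled functions and verify that the resulting constants stay uniform in $x$ (they do, because $\beta$ is); after that, the two inclusion arguments are routine level‑set bookkeeping, with no modular‑to‑norm conversion needed since the parameter $\rho$ in the definition of $L^{A}$ is simply carried through.
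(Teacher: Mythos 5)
Your proof is correct, and it takes a genuinely different route from the paper: the paper disposes of this lemma by citation alone (\cite[Lemma 3.7.7]{harjulehto2019orlicz} for the chain on $\R^d$ and \cite[Corollary 2.7.9]{harjulehto2019orlicz} for the bounded case), whereas you give a self-contained derivation. Your key step — using \hyperref[def:a0]{(A0)} together with \dec{q} to pin down $\beta^{q}\le A(x,1)\le\beta^{-q}$, then propagating along $[0,1]$ and $[1,\infty)$ with \inc{p} and \dec{q} to get the uniform two-sided power bounds, followed by the level-set decomposition of the modular — is sound, elementary, and has the advantage of producing explicit constants depending only on $\beta,p,q$ (and, combined with Lemma \ref{lemma:relation_norm_modular}, it would even yield norm estimates for the embeddings); what the citation buys instead is brevity and the weak-$\Phi$-function generality of the reference.

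One caveat concerns the bounded case. As literally stated, the lemma claims both inclusions $L^{q}(\Omega)\subset L^{A}(\Omega)\subset L^{p}(\Omega)$ under only \hyperref[def:a0]{(A0)} and \inc{p}; but the left inclusion genuinely requires an upper growth bound: e.g.\ $A(x,\ell)=\ell^{p}e^{\ell}$ satisfies \hyperref[def:a0]{(A0)} and \inc{p}, yet $L^{q}(\Omega)\not\subset L^{A}(\Omega)$. Your argument for that inclusion does in fact use \dec{q} (it comes from restricting the chain on $\R^{d}$), and only the right inclusion $L^{A}(\Omega)\subset L^{p}(\Omega)$ — which you correctly obtain from $A(x,\ell)\ge\beta^{p}\ell^{p}$ for $\ell\ge1/\beta$ plus $|\Omega|<\infty$ — holds under the weaker hypotheses. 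This matches the cited corollary in \cite{harjulehto2019orlicz} and is the right reading of the statement, but you should say explicitly that you retain \dec{q} for the left inclusion rather than present it as following from the reduced hypotheses. Finally, when you ``restrict the previous chain to $\Omega$'', note the harmless extension-by-zero step (justified since $A(x,0)=0$), or simply apply your pointwise bounds directly on $\Omega$.
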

\begin{proof}
    See \cite[Lemma 3.7.7]{harjulehto2019orlicz} for the first assertion and \cite[Corollary 2.7.9]{harjulehto2019orlicz} for the second.
\end{proof}

\section{Fractional generalized Sobolev-Orlicz spaces}\label{sec:fractional_generalized_sobolev_orlicz_spaces}
In this section, we consider a new fractional version of the generalized Sobolev-Orlicz spaces based on \cite{shieh2015on}. 
The idea behind these spaces is that we replace the usage of the (classical) gradient by the fractional Riesz gradient $D^s$, which can be defined for test functions $\varphi\in C^\infty_c(\R^d)$, by
\begin{equation*}
    D^s \varphi=D(I_{1-s}\varphi)
\end{equation*}
where $I_{1-s}$ is the Riesz potential of order $1-s$, $0<1-s<1$,
\begin{equation*}\label{eq:riez_potential}
    I_{1-s}\varphi(x)=(I_{1-s}*\varphi)(x)=\frac{\mu(s,d)}{d+s-1}\int_{\R^d}{\frac{\varphi(y)}{|x-y|^{d+s-1}}}\,dy,
\end{equation*}
with the normalizing constant being given by
\begin{equation*}
    \mu(d,s)=\frac{2^s\Gamma\left(\frac{d+s+1}{2}\right)}{\pi^{d/2}\Gamma\left(\frac{1-s}{2}\right)}.
\end{equation*}
The definition of the normalizing constant $\mu(d,s)$ can be extended naturally (using the same characterization) to the case in which $-1\leq s\leq 1$ with the case $s=1$ being defined through the limit: $\mu(d,1)=\lim_{s\to 1}{\mu(d,s)}=0$. This extension is useful in the formula for the fractional Fundamental Theorem of Calculus described in \cite[Theorem 1.12]{shieh2015on} and \cite[Proposition 15.8]{ponce2016elliptic}, which says that it is possible to recover the function $\varphi$ from the fractional Riesz gradient $D^s \varphi$ through the following formula
\begin{equation}\label{eq:fundamental_theorem_calculus}
    \varphi(x)=\mu(d,-s)\int_{\R^d}{D^s \varphi(y)\frac{x-y}{|x-y|^{d-s+1}}}\,dy=I_s(\mathcal{R}\cdot D^s \varphi)(x),
\end{equation}
where $\mathcal{R}$ is the vector-valued Riesz transform, which is a Calderón-Zygmund singular integral operator in the sense of \cite[p. 115]{harjulehto2019orlicz}. For future reference, we say that an operator $T$ is a Calderón-Zygmund singular integral operator if there exists a kernel $K:(\R^d\times\R^d)\setminus\{(x,x):\, x\in\R\}\to \R$ such that 
\begin{equation*}
    T\varphi(x)=\int_{\R^d}{K_{s}(x,y)\varphi(y)}\,dy, \quad \forall x\notin \mathrm{supp}{(\varphi)} \mbox{ and } \varphi\in C^\infty_c(\R^d)
\end{equation*}
and the kernel $K$ satisfies
\begin{equation}\label{eq:estimate_function_strong_mihklin}
    |K(x,y)|\leq C|x-y|^{-d},
\end{equation}
and for some $\varepsilon>0$, 
\begin{equation}\label{eq:estimate_derivative_strong_mihklin}
    |K(x,y)-K(x,y+h)|+|K(x,y)-K(x+h,y)|\leq C\frac{|h|^\varepsilon}{|x-y|^{d+\varepsilon}}
\end{equation}
for all $h$ such that $|h|\leq \frac{|x-y|}{2}$.

\begin{definition}[Fractional Sobolev-Orlicz space]
    Let us consider $\Omega$ to be an open set, $A\in \Phi(\R^d)$, and $s\in[0,1]$. We define the space $\Lambda^{s,A}_0(\Omega)$ as the completion of $C^\infty_c(\Omega)$, interpreted as functions extended by $0$ to the whole $\R^d$, with respect to the norm
    \begin{equation*}
        \|u\|_{\Lambda^{s,A}_0(\Omega)}=\|u\|_{L^A(\R^d)}+\|D^s u\|_{L^A(\R^d;\R^d)}.
    \end{equation*}
    We also denote by $(\Lambda^{s,A}_0(\Omega))'=\Lambda^{-s,A'}(\Omega)$ the dual of $\Lambda^{s,A}_0(\Omega)$.
\end{definition}

To make sense of the action of the Riesz fractional gradient in these spaces, i.e., $D^s u$ for $u\in \Lambda^{s,A}_0(\Omega)$, we extend its definition by continuity. Indeed, since for each $u\in \Lambda^{s,A}_0(\Omega)$ there exists a sequence $\{u_n\}\subset C^\infty_c(\Omega)$ such that $u_n\to u$ in $L^A(\R^d)$ and $\{D^s u_n\}$ is a Cauchy sequence in $L^A(\R^d;\R^d)$, we define $D^s u$ as the $L^A(\R^d;\R^d)$ limit of the sequence $\{D^s u_n\}$.

As a consequence of this characterization of the functions $D^s u$ with $u\in \Lambda^{s,A}_0(\Omega)$, we have that the duality between the fractional gradient and the fractional divergence still holds in $\Lambda^{s,A}_0(\Omega)$. In fact, for any element $u\in\Lambda^{s,A}_0(\Omega)$ and any sequence of test functions $\{u_n\}\subset C^\infty_c(\Omega)$ such that $u_n\to u$ in $\Lambda^{s,A}_0(\Omega)$, we have that for all $\Psi\in C^\infty_c(\Omega;\R^d)$ 
\begin{equation}
    \int_{\R^d}{D^s u\cdot \Psi}=\lim_{n\to\infty}{\int_{\R^d}{D^s u_n\cdot \Psi}}=-\lim_{n\to\infty}{\int_{\R^d}{u_n D^s\cdot \Psi}}=-\int_{\R^d}{u D^s\cdot \Psi}.
\end{equation}
This duality property, motivates the following characterization of $\Lambda^{-s,A'}(\Omega)$.

\begin{proposition}\label{prop:characterization_dual}
    Let $s\in (0,1]$ and $A\in\Phi(\R^d)$. If $F\in \Lambda^{-s,A'}(\Omega)$, then, there exist functions $f\in L^{A'}(\Omega)$ and $\boldsymbol{f}\in L^{A'}(\R^d;\R^d)$ such that
    \begin{equation*}
        \langle F, g\rangle_{\Lambda^{-s,A'}(\Omega)\times\Lambda^{s,A}_0(\Omega)}=\int_{\Omega}{f g}\,dx+\int_{\R^d}{\boldsymbol{f}\cdot D^s g}\,dx,\quad \forall g\in \Lambda^{s,A}_0(\Omega).
    \end{equation*}
\end{proposition}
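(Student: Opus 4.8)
The plan is to realize $\Lambda^{s,A}_0(\Omega)$ as a closed subspace of a product space on which the Riesz representation of the dual is transparent, and then pull the representation back. Concretely, I would consider the map
\[
    \iota:\Lambda^{s,A}_0(\Omega)\to L^A(\R^d)\times L^A(\R^d;\R^d),\qquad \iota(u)=(u,D^s u).
\]
By the very definition of the norm on $\Lambda^{s,A}_0(\Omega)$ (and the fact that, by the construction via Cauchy sequences, $D^s u$ is well defined in $L^A(\R^d;\R^d)$), $\iota$ is a linear isometry onto its image $X:=\iota(\Lambda^{s,A}_0(\Omega))$, which is closed since $\Lambda^{s,A}_0(\Omega)$ is complete. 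Hence every $F\in(\Lambda^{s,A}_0(\Omega))'=\Lambda^{-s,A'}(\Omega)$ induces, via $\iota^{-1}$, a bounded linear functional $\widetilde F$ on $X$, and by Hahn–Banach $\widetilde F$ extends to a bounded linear functional $G$ on all of $L^A(\R^d)\times L^A(\R^d;\R^d)$ with the same norm.

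Next I would identify the dual of the product. Since $A$ and $A'$ satisfy the $\Delta_2$-condition (equivalently the \inc{p}/\dec{q} growth), the generalized Orlicz space $L^A$ is separable and reflexive, and its dual is $L^{A'}$ with the pairing $\langle g,h\rangle=\int g h$; the dual of a finite product is the product of the duals. Therefore there exist $f\in L^{A'}(\Omega)$ (the component dual to $L^A(\Omega)$; note that since elements of $\Lambda^{s,A}_0(\Omega)$ vanish outside $\Omega$ we may take the first component supported in $\Omega$) and $\boldsymbol f\in L^{A'}(\R^d;\R^d)$ such that
\[
    G(v,\boldsymbol w)=\int_{\Omega} f v\,dx+\int_{\R^d}\boldsymbol f\cdot\boldsymbol w\,dx
\]
for all $(v,\boldsymbol w)\in L^A(\R^d)\times L^A(\R^d;\R^d)$. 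Restricting to $X$, i.e. taking $(v,\boldsymbol w)=(g,D^s g)$ with $g\in\Lambda^{s,A}_0(\Omega)$, gives
\[
    \langle F,g\rangle=\widetilde F(\iota(g))=G(g,D^s g)=\int_{\Omega} f g\,dx+\int_{\R^d}\boldsymbol f\cdot D^s g\,dx,
\]
which is exactly the claimed formula; density of $C^\infty_c(\Omega)$ and continuity of both sides handle the general $g$.

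The technical points to be careful about, rather than a single hard obstacle, are: (i) citing precisely that, under \inc{p}, \dec{q} (hence $\Delta_2$ for $A$ and $A'$), $(L^A)^*\cong L^{A'}$ with the integral pairing — this is standard in the generalized Orlicz setting and is exactly the regime set up in Section \ref{sec:generalized_orlicz_spaces}, so it may be quoted from \cite{harjulehto2019orlicz}; (ii) checking that $\iota$ is genuinely an isometric embedding with closed range, which follows immediately from the definition of $\|\cdot\|_{\Lambda^{s,A}_0(\Omega)}$ and completeness; and (iii) observing that the first component of the representing pair can be taken in $L^{A'}(\Omega)$ rather than $L^{A'}(\R^d)$, because functions $g\in\Lambda^{s,A}_0(\Omega)$ are extended by zero outside $\Omega$, so only the values of the first test function on $\Omega$ matter and one restricts $f$ accordingly. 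No use of $s\in(0,1]$ versus $s=0$ is essential here beyond having $D^s$ well defined, but keeping $s>0$ matches the standing hypotheses of the section. This argument is the natural analogue of the classical identification of $W^{-1,p'}$ with divergences of $L^{p'}$ fields, transported to the fractional, generalized-Orlicz setting.
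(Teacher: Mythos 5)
Your argument is essentially the paper's own proof: the paper likewise embeds $\Lambda^{s,A}_0(\Omega)$ isometrically into $L^A(\Omega)\times L^A(\R^d;\R^d)$ via $g\mapsto[g,D^s g]$, extends the induced functional by Hahn--Banach, and applies the Riesz representation theorem for generalized Orlicz spaces \cite[Theorem 3.4.6]{harjulehto2019orlicz} to obtain $f$ and $\boldsymbol{f}$. The proposal is correct and matches that route, differing only in bookkeeping remarks (support of $f$ in $\Omega$, invoking the $\Delta_2$ conditions for the duality).
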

\begin{proof}
    Let $P:\Lambda^{s,A}_0(\Omega)\to L^A(\Omega)\times L^A(\R^d;\R^d)$ which is defined by $P(g)=[g,D^s g]$. Since
    \begin{equation*}
        \|P(g)\|_{L^A(\R^d)\times L^A(\R^d;\R^d)}=\|g\|_{\Lambda^{s,A}_0(\Omega)}
    \end{equation*}
    then, $P$ maps isometrically $\Lambda^{s,A}_0(\Omega)$ onto a subspace $W\subset L^A(\R^d;\R^{d+1})$. Define $F^*$ on $W$ as $\langle F^*, P(g)\rangle=\langle F, g\rangle$, which satisfies $\|F^*\|_{W'}=\|F\|_{\Lambda^{-s,A'}(\Omega)}$. By the Hahn-Banach theorem, there exists an extension $\Tilde{F}$ of $F^*$ in $L^A(\Omega)\times L^A(\R^d;\R^d)$ such that $\|F^*\|_{W'}=\|\Tilde{F}\|_{L^A(\Omega)\times L^A(\R^d;\R^d)}$. By the Riesz representation theorem for generalized Orlicz spaces \cite[Theorem 3.4.6]{harjulehto2019orlicz}, we have that there exists $f\in L^{A'}(\Omega)$ and $\boldsymbol{f}\in L^{A'}(\R^d;\R^d)$ such that
    \begin{equation*}
        \langle \Tilde{F}, h\rangle_{L^{A'}\times L^A}=\int_{\Omega}{f h}\,dx+\int_{\R^d}{\boldsymbol{f}\cdot\boldsymbol{h}}\,dx,\quad \forall h=[h, \boldsymbol{h}]\in  L^A(\Omega)\times L^A(\R^d;\R^d).
    \end{equation*}
    Then, the result follows from
    \begin{multline*}
        \langle F, g\rangle_{\Lambda^{-s,A'}(\Omega)\times\Lambda^{s,A}_0(\Omega)}
        =\langle F^*, P(g)\rangle_{W'\times W}
        =\langle \Tilde{F}, P(g)\rangle_{L^{A'}\times L^A}\\
        =\int_{\Omega}{f g}\,dx+\int_{\R^d}{\boldsymbol{f}\cdot D^s g}\,dx,\quad \forall g\in \Lambda^{s,A}_0(\Omega)
    \end{multline*}
\end{proof}

\subsection{Continuous embeddings and continuity in $s$}
With the fractional Sobolev-Orlicz space defined, we now prove some important embedding results that are going to be crucial in the next sections.
Our first result is the Sobolev inequality for fractional generalized Sobolev-Orlicz spaces.

\begin{theorem}[Sobolev inequality]
    Let $p,q,r\geq 1$ such that $s\in(0,1)$ such that $\gamma:=\frac{s}{d}=\frac{1}{p}-\frac{1}{q}$ and $r\in(\gamma,\frac{1}{p}]$. Consider two $\Phi$-functions $A,B\in\Phi(\R^d)$ satisfying \hyperref[def:a0]{(A0)}, \hyperref[def:a1]{(A1)} and \hyperref[def:a2]{(A2)}. Assume also that $A$ is \inc{p} and \dec{1/r}, $B$ is \inc{q} and \dec{\frac{1}{r-\gamma}}, and that there exist two constants $c_1,c_2>0$ such that
    \begin{equation*}
        c_1 A^{-1}(x,t)\leq t^\gamma B^{-1}(x,t)\leq c_2 A^{-1}(x,t)\qquad\forall \ell>0 \mbox{ and a.e. } x\in\R^d.
    \end{equation*}
    Then, $\Lambda^{s,A}_0(\Omega)\subset L^B(\Omega)$ with the inequality
    \begin{equation*}
        \|u\|_{L^B(\Omega)}\leq C\|D^s u\|_{L^A(\R^d;\R^d)}
    \end{equation*}
\end{theorem}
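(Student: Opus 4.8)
The strategy is to reduce the Sobolev inequality in the fractional generalized Sobolev-Orlicz setting to a mapping property of the Riesz potential $I_s$ on generalized Orlicz spaces, via the fractional Fundamental Theorem of Calculus \eqref{eq:fundamental_theorem_calculus}. Concretely, for $u\in C^\infty_c(\Omega)$ (and then by density for general $u\in\Lambda^{s,A}_0(\Omega)$) we have the representation $u = I_s(\mathcal R\cdot D^s u)$. Since $\mathcal R$ is a vector-valued Calderón–Zygmund singular integral operator, the boundedness of singular integrals on generalized Orlicz spaces (this is where hypotheses \hyperref[def:a0]{(A0)}, \hyperref[def:a1]{(A1)}, \hyperref[def:a2]{(A2)} together with \inc{p} and \dec{q} on $A$ enter; see \cite[Theorem 6.3.x / Section 6.3]{harjulehto2019orlicz}) gives $\|\mathcal R\cdot D^s u\|_{L^A(\R^d)}\leq C\|D^s u\|_{L^A(\R^d;\R^d)}$. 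So it suffices to prove the Sobolev-type bound
\begin{equation*}
    \|I_s g\|_{L^B(\Omega)}\leq C\|g\|_{L^A(\R^d)}
\end{equation*}
for scalar $g\in L^A(\R^d)$, under the stated relations between $A$, $B$, and the exponents.

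First I would record the scaling/consistency of the hypotheses: with $\gamma=s/d=\tfrac1p-\tfrac1q$ and $r\in(\gamma,\tfrac1p]$, the assumption that $A$ is \inc{p} and \dec{1/r} and $B$ is \inc{q} and \dec{1/(r-\gamma)} makes both $\Phi$-functions admissible (note $p\le 1/r$ and $q\le 1/(r-\gamma)$ so these growth intervals are nonempty), and the two-sided comparison $c_1 A^{-1}(x,t)\le t^\gamma B^{-1}(x,t)\le c_2 A^{-1}(x,t)$ is precisely the condition that pairs $A$ and $B$ as a "Sobolev couple" for the potential of order $s$ — it is the generalized-Orlicz analogue of $\tfrac1q=\tfrac1p-\tfrac sd$. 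The plan is then to invoke the mapping property of the Riesz potential $I_s\colon L^A(\R^d)\to L^B(\R^d)$ which holds exactly under (A0), (A1), (A2) and this inverse-comparison condition; this is the content of the generalized-Orlicz Sobolev embedding for Riesz potentials (cf. \cite[Section 6.3 / Theorem 6.3.x]{harjulehto2019orlicz}, where it is proved by combining a pointwise estimate of $I_s g$ by the maximal function $M g$ with the boundedness of $M$ on $L^A$ and $L^B$). Composing this with the singular-integral bound above yields
\begin{equation*}
    \|u\|_{L^B(\R^d)} = \|I_s(\mathcal R\cdot D^s u)\|_{L^B(\R^d)} \leq C\|\mathcal R\cdot D^s u\|_{L^A(\R^d)} \leq C\|D^s u\|_{L^A(\R^d;\R^d)},
\end{equation*}
and restricting to $\Omega$ gives the claimed inequality; the inclusion $\Lambda^{s,A}_0(\Omega)\subset L^B(\Omega)$ follows since the bound passes to the completion.

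The main obstacle is verifying that the hypotheses as stated are exactly what is needed to quote the Riesz-potential mapping property from \cite{harjulehto2019orlicz}: one must check that "$A$ \inc{p}, \dec{1/r}" and "$B$ \inc{q}, \dec{1/(r-\gamma)}" together with the $A^{-1}$–$B^{-1}$ comparison imply the precise structural condition used there (often phrased via $A^{-1}(x,t)/B^{-1}(x,t)\simeq t^{-s/d}$ on the relevant range of $t$, plus (A0)–(A2) for both functions), and to confirm the harmonic-analysis input — boundedness of Calderón–Zygmund operators and of the Hardy–Littlewood maximal operator on $L^A$ — is available under (A0), (A1), (A2), \inc{p}, \dec{q}. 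A secondary technical point is the density/completion step: one should note that $D^s u$ for $u\in\Lambda^{s,A}_0(\Omega)$ is defined as an $L^A$-limit of $D^s u_n$, that $u_n\to u$ in $L^A$, and hence that both the representation formula and the final inequality extend from $C^\infty_c(\Omega)$ by continuity of all operators involved. None of these require new ideas beyond carefully matching the statement to the cited results, so the proof is essentially an assembly argument.
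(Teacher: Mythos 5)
Your proposal follows essentially the same route as the paper: represent $u=I_s(\mathcal{R}\cdot D^s u)$ via the fractional fundamental theorem of calculus, apply the Riesz potential embedding $I_s\colon L^A(\R^d)\to L^B(\R^d)$ (the paper cites \cite[Corollary 5.4.5]{harjulehto2019orlicz}), and then bound $\|\mathcal{R}\cdot D^s u\|_{L^A(\R^d)}$ by $\|D^s u\|_{L^A(\R^d;\R^d)}$ using the Calderón--Zygmund boundedness \cite[Corollary 5.4.3]{harjulehto2019orlicz}. The only difference is that you make the density/completion step explicit, which the paper leaves implicit; otherwise the argument is the same.
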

\begin{proof}
    From \cite[Corollary 5.4.5]{harjulehto2019orlicz},
    \begin{equation*}
        \|I_s f\|_{L^B(\R^d)}\leq C\|f\|_{L^A(\R^d)},\quad\forall f\in L^A(\R^d).
    \end{equation*}
    Let $u\in C^\infty_c(\Omega)$. If we set $f=\mathcal{R}\cdot D^s u$, with $\mathcal{R}$ being the vectorial Riesz transform, then the second identity of \eqref{eq:fundamental_theorem_calculus} yields
    \begin{equation*}
        \|u\|_{L^B(\Omega)}\leq C\|\mathcal{R}\cdot D^s u\|_{L^A(\R^d)}.
    \end{equation*}
    Since the $\mathcal{R}$ is a Calderón-Zygmund singular integral operator, we can then apply \cite[Corollary 5.4.3]{harjulehto2019orlicz} to obtain
    \begin{equation*}
        \|\mathcal{R}\cdot D^s u\|_{L^A(\R^d)}\leq C\|D^s u\|_{L^A(\R^d;\R^d)}.
    \end{equation*}
\end{proof}

Another result that we prove here is the fractional Poincaré inequality for the fractional generalized Orlicz-Sobolev spaces.

\begin{theorem}[Poincaré inequality]\label{thm:poincare_inequality}
    Let $\Omega\subset\R^d$ be a bounded open set, $s\in(0,1)$, and $A\in\Phi(\R^d)$ satisfying \hyperref[def:a0]{(A0)}, \hyperref[def:a1]{(A1)} and \hyperref[def:a2]{(A2)}. Then, there exists a positive constant $C>0$, independent of $s$ and $u$, such that
    \begin{equation*}
        \|u\|_{L^A(\Omega)}\leq \frac{C}{1-2^{-s}}\|D^s u\|_{L^A(\Omega_1)},\quad \forall u\in \Lambda^{s,A}_0(\Omega).
    \end{equation*}
\end{theorem}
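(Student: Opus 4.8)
The plan is to reduce to test functions, represent $u$ through $D^{s}u$ by the fractional fundamental theorem of calculus, and then estimate the resulting Riesz potential by a dyadic decomposition whose geometric tail produces the factor $(1-2^{-s})^{-1}$. First, since $\Lambda^{s,A}_0(\Omega)$ is the completion of $C^{\infty}_c(\Omega)$ and both sides of the asserted inequality are continuous for $\|\cdot\|_{\Lambda^{s,A}_0(\Omega)}$ (indeed $\|u\|_{L^A(\Omega)}\le\|u\|_{L^A(\R^d)}$ and $\|D^{s}u\|_{L^A(\Omega_1)}\le\|D^{s}u\|_{L^A(\R^d;\R^d)}$), it suffices to treat $u\in C^{\infty}_c(\Omega)$. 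For such $u$, formula \eqref{eq:fundamental_theorem_calculus} yields the pointwise bound
\begin{equation*}
|u(x)|\le\mu(d,-s)\int_{\R^d}\frac{|D^{s}u(y)|}{|x-y|^{d-s}}\,dy,\qquad x\in\R^d,
\end{equation*}
so that $|u|$ is dominated by a constant multiple of the Riesz potential of $|D^{s}u|$, with $\mu(d,-s)$ bounded between two positive constants for $s\in(0,1)$.

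Next I would fix $x\in\Omega$ and split $\R^{d}$ into the dyadic shells $S_k=\{2^{-k-1}\le|x-y|<2^{-k}\}$, $k\in\Z$. For $k\ge 0$ one has $S_k\subset B_1(x)\subseteq\Omega_1$, and bounding the kernel on $S_k$ by its supremum times $|B_{2^{-k}}(x)|$ shows that the corresponding part of the integral is at most $C_d\,2^{-ks}M\!\big(\chi_{\Omega_1}|D^{s}u|\big)(x)$, where $M$ denotes the Hardy--Littlewood maximal operator; summing the geometric series $\sum_{k\ge 0}2^{-ks}=(1-2^{-s})^{-1}$ produces the announced $s$-dependence. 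The finitely many shells that meet $\Omega_1$ but lie outside $B_1(x)$ are handled in the same way and contribute at most $\frac{C_{\Omega}}{1-2^{-s}}M(\chi_{\Omega_1}|D^{s}u|)(x)$ (using $s^{-1}\le C\,(1-2^{-s})^{-1}$). For the shells lying outside $\Omega_1$ one uses that, $u$ being supported in $\Omega$, on them $D^{s}u=D(I_{1-s}u)$ is smooth and, by differentiating the Riesz kernel of $I_{1-s}$, satisfies $|D^{s}u(y)|\le C\|u\|_{L^{1}(\Omega)}\operatorname{dist}(y,\Omega)^{-(d+s)}$; since also $|x-y|\ge\operatorname{dist}(y,\Omega)\ge 1$ there, this far contribution is at most $C_{\Omega}\|u\|_{L^{1}(\Omega)}$ with $C_{\Omega}$ independent of $s$. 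Collecting the three regions gives, for a.e.\ $x\in\Omega$,
\begin{equation*}
|u(x)|\le\frac{C_{\Omega}}{1-2^{-s}}\,M\!\big(\chi_{\Omega_1}|D^{s}u|\big)(x)+C_{\Omega}\,\|u\|_{L^{1}(\Omega)}.
\end{equation*}

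It then remains to take $L^{A}(\Omega)$-norms. Here I would invoke the boundedness of $M$ on $L^{A}(\R^{d})$ --- which is exactly what the hypotheses \hyperref[def:a0]{(A0)}, \hyperref[def:a1]{(A1)}, \hyperref[def:a2]{(A2)} together with \inc{p} ($p>1$) are needed for, cf.\ \cite{harjulehto2019orlicz} --- and the embedding $L^{A}(\Omega)\hookrightarrow L^{1}(\Omega)$ for bounded $\Omega$ coming from Lemma \ref{lemma:relation_classical_and_orlicz} and H\"older's inequality, obtaining
\begin{equation*}
\|u\|_{L^{A}(\Omega)}\le\frac{C_{\Omega}}{1-2^{-s}}\,\|D^{s}u\|_{L^{A}(\Omega_1)}+C_{\Omega}'\,\|u\|_{L^{A}(\Omega)},
\end{equation*}
with $C_{\Omega}'$ independent of $s$. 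The step I expect to be the main obstacle is the absorption of the last term, i.e.\ ensuring $C_{\Omega}'<1$: the crucial point is that every $s$-dependent constant occurring above ($\mu(d,\pm s)$, the constant in $I_{1-s}$, and $\|M\|_{L^{A}\to L^{A}}$) remains bounded on $(0,1)$, so $C_{\Omega}'$ can be pushed below $1$ by taking $\Omega_1$ a sufficiently wide neighbourhood of $\Omega$ (which shrinks the far-shell coefficient) or, if $\Omega_1$ is prescribed, by first running the argument on a finite cover of $\overline{\Omega}$ by small pieces and iterating. Once $C_{\Omega}'<1$ is secured, absorbing and relabelling constants yields $\|u\|_{L^{A}(\Omega)}\le\frac{C}{1-2^{-s}}\|D^{s}u\|_{L^{A}(\Omega_1)}$ with $C$ independent of $s$ and $u$, and density of $C^{\infty}_c(\Omega)$ concludes the proof.
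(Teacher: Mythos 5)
Your proposal is correct in substance and follows the same skeleton as the paper's proof (reduce to $C^\infty_c(\Omega)$, represent $u$ via \eqref{eq:fundamental_theorem_calculus}, dyadic decomposition of the near part giving the geometric series $\sum_k 2^{-sk}=(1-2^{-s})^{-1}$, a far-field pointwise bound on $D^su$ in terms of $\|u\|_{L^1(\Omega)}$, and absorption of the resulting $\|u\|_{L^A(\Omega)}$ term), but it differs in two technical choices that deserve comment. First, you control the dyadic pieces by the Hardy--Littlewood maximal operator on $L^A(\R^d)$, which requires \inc{p} with $p>1$ in addition to (A0)--(A2); the paper instead bounds them by the dyadic averaging operators $T_{k}$ of \cite[Lemma 6.1.4]{diening2017lebesgue}, whose uniform $L^A$-boundedness \cite[Theorem 4.4.3]{harjulehto2019orlicz} needs only (A0)--(A2), so it matches the hypotheses as stated in the theorem (though \inc{p} is a standing assumption elsewhere in the paper, so your route is acceptable in context). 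Second, and more importantly, you place the near/far splitting at the fixed set $\Omega_1$, which is exactly why your absorption constant $C'_\Omega$ has no reason to be smaller than $1$: with a fixed splitting radius there is no smallness parameter, and the inequality cannot be closed as written. The workable fix is the first one you mention, and it is precisely what the paper does: split at a large ball $B_{2R}$, observe that the far contribution carries a factor $R^{-d}$ (your $\operatorname{dist}(y,\Omega)^{-(d+s)}|x-y|^{-(d-s)}$ estimate gives this decay once the far region is $\{\operatorname{dist}(y,\Omega)\geq R\}$), and choose $R$ so that $CR^{-d}\leq 1/2$; note that the paper's own proof then concludes with $\|D^su\|_{L^A(B_{3R})}$ on the right-hand side, so enlarging the domain of the fractional gradient is consistent with what is actually proved. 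Your alternative suggestion of covering $\overline{\Omega}$ by small pieces and iterating would not work: the restrictions of $u$ to the pieces do not vanish outside them, so the Poincar\'e-type estimate does not localize and there is no iteration scheme that produces the missing smallness. With the large-$R$ absorption in place of that alternative, your argument is complete.
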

\begin{proof}
    Let $u\in C^\infty_c(\Omega)$. Consider also $R>1$ to be sufficiently large so that $\Omega\subset B_R(0)$. Making use of the first identity in \eqref{eq:fundamental_theorem_calculus},
    \begin{equation}\label{eq:split_norm_for_poincare}
        \begin{aligned}
            &\|u\|_{L^A(\Omega)}\leq \mu(d,-s)\left\|\left(\int_{B_{2R}}{\frac{|D^s u(y)|}{|x-y|^{d-s}}}\,dy+\int_{B^s_{2R}}{\frac{|D^s u(y)|}{|x-y|^{d-s}}}\,dy\right)\right\|_{L^A(\Omega)}\\
            &\leq \mu(d,-s)\left(\left\|\int_{B_{2R}}{\frac{|D^s u(y)|}{|x-y|^{d-s}}}\,dy\right\|_{L^A(\Omega)}+\left\|\int_{B^c_{2R}}{\frac{|D^s u(y)|}{|x-y|^{d-s}}}\,dy\right\|_{L^A(\Omega)}\right).
        \end{aligned}
    \end{equation}
    
    Just like in \cite{bellido2020gamma}, we proceed by analyzing the two norms on the right-hand-side of the previous inequality separately.
    
    Let $y\in B^c_{2R}(0)$. Then, from the integral definition of $D^s$ for regular functions
    \begin{align*}
        |D^s u(y)|\leq \mu(d,s)\int_\Omega{\frac{|u(z)|}{|y-z|^{d+s}}}\,dz.
    \end{align*}
    Observing that for $z\in\Omega\subset B_R(0)$ and $y\in B^c_{2R}(0)$, we have $1/2|y|\leq |y-z|$, then Young's inequality yields
    \begin{align*}
        |D^s u(y)|&\leq \mu(d,s)\int_\Omega{|u(z)|2^{d+s}|y|^{-d-s}}\,dz\leq \frac{2^{d+1}\mu(d,s)}{|y|^{d+s}}\int_{\Omega}{|u(z)|}\,dz\\
        & \leq \frac{2^{d+2}\mu(d,s)}{|y|^{d+s}}\|\chi_{\Omega}\|_{L^{A'}(\Omega)}\|u\|_{L^A(\Omega)}.
    \end{align*}
    This pointwise estimate allows us to obtain
    \begin{align*}
        &\int_{B^c_{2R}}{\frac{|D^s u(y)|}{|x-y|^{d-s}}}\,dy\\
        &\qquad\qquad\leq 2^{d+2}\mu(d,s)S_d\|\chi_{\Omega}\|_{L^{A'}(\Omega)}\|u\|_{L^A(\Omega)}\int_{B^c_{2R}}{\frac{1}{|y|^{d+s}}\frac{1}{|x-y|^{d-s}}}\,dy\\
        &\qquad\qquad\leq \frac{2^{3-d}\mu(d,s)}{d}S_dR^{-d}\|\chi_{\Omega}\|_{L^{A'}(\Omega)} \|u\|_{L^A(\Omega)},
    \end{align*}
    which then implies that
    \begin{equation}\label{eq:estimate_orlicz_norm_outside_ball}
        \begin{aligned}
            \left\|\int_{B^c_{2R}}{\frac{|D^s u(y)|}{|x-y|^{d-s}}}\,dy\right\|_{L^A(\Omega)}\leq \frac{2^{3-d}\mu(d,s)}{d}S_dR^{-d}\|u\|_{L^A(\Omega)}\|\chi_{\Omega}\|_{L^{A'}(\Omega)}\|\chi_{\Omega}\|_{L^{B}(\Omega)}.
        \end{aligned}
    \end{equation}

    For the other norm on the right-hand side of \eqref{eq:split_norm_for_poincare}, we make use of \cite[Lemma 6.1.4]{diening2017lebesgue}. In fact, this lemma allows us to state that for almost every $x\in\R^d$,
    \begin{align*}
        \int_{B_{2R}}{\frac{|D^s u(y)|}{|x-y|^{N-s}}}\,dy&\leq \int_{\{|x-y|<3R\}}{\frac{|D^s u(y)|}{|x-y|^{N-s}}}\,dy\\
        &\leq \frac{\pi^{n/2}}{\Gamma(n/2+1)} (3R)^s\sum_{k=0}^\infty{2^{-sk}T_{k+k_0}|D^s u|(x)},
    \end{align*}
    with $k_0\in \Z$ being such that $2^{-k_0-1}\leq 3R< 2^{-k_0}$, and 
    \begin{equation*}
        T_{k+k_0}|D^s u|(x)=\sum_{\substack{Q \text{ dyadic}\\ \text{diam}(Q)=2^{-k-k_0}}}{\frac{\chi_{Q}(x)}{|Q|}\int_{2Q}{|D^su(y)|\,dy}}.
    \end{equation*}
    Using the triangle inequality and the uniform boundedness of $T_{k+k_0}: L^A(\Omega)\to L^A(B_{3R})$ proved in \cite[Theorem 4.4.3]{harjulehto2019orlicz}, there is a positive constant $C>0$, independent of $s$ and $u$ such that
    \begin{equation}\label{eq:estimate_orlicz_norm_inside_ball}
        \begin{aligned}
            \left\|\int_{B_{2R}}{\frac{|D^s u(y)|}{|x-y|^{N-s}}}\,dy\right\|_{L^A(\Omega)}&
            \leq 2^d(3R)^s\sum_{k=0}^\infty{2^{-sk}\|T_{k+k_0}|D^s u|\|_{L^A(\Omega)}}\\
            &\leq CR^s\sum_{k=0}^\infty{2^{-sk}\|D^s u\|_{L^A(B_{3R})}}\\
            &\leq \frac{CR^s}{1-2^{-s}}\|D^s u\|_{L^A(B_{3R})}\leq \frac{CR}{1-2^{-s}}\|D^s u\|_{L^A(B_{3R})}.
        \end{aligned}
    \end{equation}

    Applying \eqref{eq:estimate_orlicz_norm_outside_ball} and \eqref{eq:estimate_orlicz_norm_inside_ball} into \eqref{eq:split_norm_for_poincare}, and using the fact that $\sup_{s\in[-1,1]}{\mu(N,s)}<\infty$, we deduce that there exists a single positive constant $C>0$, independent of $s$ and $u$, such that
    \begin{equation*}
        \|u\|_{L^A(\Omega)}\leq C\left(\frac{R}{1-2^{-s}}\|D^s u\|_{L^A(B_{3R})}+R^{-d}\|u\|_{L^A(\Omega)}\right).
    \end{equation*}
    Then, we just need to choose $R$ sufficiently large so that $CR^{-d}\leq 1/2$.
\end{proof}

\begin{remark}
    The extreme cases $s=1$ and $s=0$ are well known in the literature. For instance, the case $s=1$, corresponds to the classical Poincaré inequality when we make the identification $D^1=D$, \cite[Theorem 6.2.8]{harjulehto2019orlicz}, while the case $s=0$ corresponds to the fact that $D^0=-\mathcal{R}$ transforms continuously functions in $L^A(\R^d)$ to functions in $L^A(\R^d;\R^d)$, \cite[Corollary 5.4.3]{harjulehto2019orlicz}.
\end{remark}

\begin{remark}
    The Poincaré inequality obtained in Theorem \ref{thm:poincare_inequality} can be used to improve the characterization of the elements of $\Lambda^{-s,A'}(\Omega)$ obtained in Proposition \ref{prop:characterization_dual}. In fact, if besides the assumptions of Proposition \ref{prop:characterization_dual} we also assume that $\Omega$ is a bounded open set, then it is possible to prove that for each $F\in \Lambda^{-s,A'}(\Omega)$, there exists $\boldsymbol{f}\in L^{A'}(\R^d;\R^d)$ such that
    \begin{equation*}
        \langle F, g\rangle_{\Lambda^{-s,A'}(\Omega)\times\Lambda^{s,A}_0(\Omega)}=\int_{\R^d}{\boldsymbol{f}\cdot D^s g}\,dx,\quad \forall g\in \Lambda^{s,A}_0(\Omega).
    \end{equation*}
    This is a simple consequence of the fact that the Poincaré inequality allows us to state that $u\mapsto \|D^s u\|_{L^A(\R^d;\R^d)}$ is a norm for $\Lambda^{s,A}_0(\Omega)$.
\end{remark}

We can extend the previous method to obtain an order on the family of fractional generalized Orlicz-Sobolev spaces, $\Lambda^{s,A}_0(\Omega)$, with respect to the fractional parameter $s$.

\begin{theorem}\label{thm:spaces_decrease}
    Let $\Omega$ be a bounded open set, $0<\sigma<s\leq 1$, and $A\in\Phi(\R^d)$ satisfying \hyperref[def:a0]{(A0)}, \hyperref[def:a1]{(A1)} and \hyperref[def:a2]{(A2)}. There exists a constant $C>0$ independent of $s$ and $\sigma$ such that
    \begin{multline*}
        \|D^\sigma u\|_{L^A(\R^d)}\\
        \leq C\left(\frac{1}{d-1+\sigma}\left(1+\frac{1}{1-2^{-\sigma}}\right)+\frac{1}{\sigma}\|\chi_\Omega\|_{L^{A'}}\frac{1}{1-2^{-\sigma}}\right)\|D^s u\|_{L^A(\R^d)}
    \end{multline*}
\end{theorem}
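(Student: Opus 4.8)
The plan is to mimic the proof of Theorem~\ref{thm:poincare_inequality}, with the Fundamental Theorem of Calculus \eqref{eq:fundamental_theorem_calculus} replaced by the identity expressing $D^\sigma u$ directly in terms of $D^s u$. By density it suffices to prove the inequality for $u\in C^\infty_c(\Omega)$, and we fix $R>1$, depending only on $\Omega$, with $\Omega\subset B_R(0)$. Since $D^\sigma=D\circ I_{1-\sigma}$ and $D^s=D\circ I_{1-s}$, and Riesz potentials commute with $D$ and obey the semigroup law $I_a\circ I_b=I_{a+b}$ (here $a=s-\sigma$, $b=1-s$, $a+b=1-\sigma<d$), for $u\in C^\infty_c(\Omega)$ one has
\[
D^\sigma u=I_{s-\sigma}\bigl(D^s u\bigr),\qquad\text{that is}\qquad D^\sigma u(x)=\frac{\mu(d,1-s+\sigma)}{d-s+\sigma}\int_{\R^d}\frac{D^s u(y)}{|x-y|^{\,d-s+\sigma}}\,dy,
\]
the Riesz potential $I_{s-\sigma}$ of order $s-\sigma\in(0,1)$ acting componentwise. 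As $s\le1$ we have $d-s+\sigma\ge d-1+\sigma$, and $1-s+\sigma\in[\sigma,1)$, so the prefactor is bounded by $C(d)/(d-1+\sigma)$ because $\sup_{t\in[-1,1]}\mu(d,t)<\infty$; this will be the origin of the factor $\tfrac1{d-1+\sigma}$. (The identity is classical for Schwartz functions; it is made precise through the above semigroup/commutation relations and then extended to $\Lambda^{s,A}_0(\Omega)$ by density.)

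This does not conclude the argument, since $I_{s-\sigma}$ is smoothing and hence not bounded $L^A(\R^d)\to L^A(\R^d)$; instead one uses that $u$ is supported in $B_R$. Taking absolute values and splitting the $y$-integral as $\int_{B_{2R}}+\int_{B_{2R}^c}$, it remains to control the $L^A(\R^d)$-norms of $x\mapsto\int_{B_{2R}}|D^s u(y)|\,|x-y|^{-(d-s+\sigma)}\,dy$ and $x\mapsto\int_{B_{2R}^c}|D^s u(y)|\,|x-y|^{-(d-s+\sigma)}\,dy$. For the first, local, term I would proceed exactly as in the proof of Theorem~\ref{thm:poincare_inequality}: on a fixed larger ball, enlarge the domain of integration and apply \cite[Lemma~6.1.4]{diening2017lebesgue} together with the uniform $L^A$-boundedness of the dyadic averaging operators $T_k$ from \cite[Theorem~4.4.3]{harjulehto2019orlicz} (or, equivalently, the Orlicz Riesz-potential inequality on a bounded set plus a Sobolev embedding there); outside that ball the kernel is bounded and the contribution is estimated by $\|\chi_{B_{2R}}\|_{L^{A'}}\|D^s u\|_{L^A}$ via Hölder's inequality in $L^A$. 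Summing the resulting geometric series — after elementary manipulations using $\sigma<s\le1$ — yields a bound of the form $C\bigl(1+\tfrac1{1-2^{-\sigma}}\bigr)\|D^s u\|_{L^A(\R^d)}$.

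For the second, tail, term, note that $\Omega\subset B_R\subset B_{2R}$ forces $u\equiv0$ on $B_{2R}^c$, so the (non-singular, since $y\notin\mathrm{supp}\,u$) integral formula for $D^s$ gives
\[
|D^s u(y)|\le\mu(d,s)\int_\Omega\frac{|u(z)|}{|y-z|^{\,d+s}}\,dz\le\frac{C\,\mu(d,s)}{|y|^{\,d+s}}\,\|\chi_\Omega\|_{L^{A'}}\,\|u\|_{L^A(\Omega)}\qquad(y\in B_{2R}^c)
\]
by Hölder in $L^A$. Inserting this decay, integrating in $y$ over $B_{2R}^c$ (which produces the factor $\tfrac1\sigma$) and taking the $L^A$-norm in $x$, one gets the bound $\tfrac{C}{\sigma}\|\chi_\Omega\|_{L^{A'}}\|u\|_{L^A(\Omega)}$; the Poincaré inequality of Theorem~\ref{thm:poincare_inequality} then replaces $\|u\|_{L^A(\Omega)}$ by $\tfrac{C}{1-2^{-s}}\|D^s u\|_{L^A}\le\tfrac{C}{1-2^{-\sigma}}\|D^s u\|_{L^A}$, the last inequality using $\sigma<s$. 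Multiplying the two contributions by the prefactor $C(d)/(d-1+\sigma)$ and absorbing the uniformly bounded constants into $C$ yields the asserted estimate.

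The main obstacle is precisely this tail term: unlike in the classical situation, $D^s u$ is not compactly supported even when $u$ is, so the Riesz potential cannot simply be restricted to a bounded set, and it is the device used to remedy this — exploiting the explicit pointwise decay of $D^s u$ away from $\mathrm{supp}\,u$ and then re-absorbing the residual $\|u\|_{L^A(\Omega)}$ through the already-established Poincaré inequality — that forces both $\|\chi_\Omega\|_{L^{A'}}$ and the factor $\tfrac1{1-2^{-\sigma}}$ into the final constant. The remaining work is the careful bookkeeping of the normalising constants (the $\mu(d,\cdot)$ factors, the $(d-1+\sigma)^{-1}$ coming from $I_{s-\sigma}$, and the geometric sums) needed to recover the stated $\sigma$-dependence rather than a merely qualitative bound.
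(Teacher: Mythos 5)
Your overall strategy (semigroup identity $D^\sigma u=I_{s-\sigma}D^s u$, the dyadic machinery from the Poincar\'e proof, the pointwise decay of $D^s u$ away from $\mathrm{supp}\,u$, and re-absorption of $\|u\|_{L^A(\Omega)}$ via Theorem \ref{thm:poincare_inequality}) is the right one, but your decomposition has a genuine gap: you split only the $y$-integral into $B_{2R}$ and $B_{2R}^c$ and then try to estimate the $L^A(\R^d)$-norm in $x$, over \emph{all} of $\R^d$, of each piece. Once absolute values are inside, each piece behaves for large $|x|$ like $|x|^{-(d-s+\sigma)}$ times a fixed positive constant, and this decay is too slow: for instance with $A(x,t)=t^p$, $p$ close to $1$ and $d-s+\sigma$ close to $d-1$ (so $p(d-s+\sigma)\le d$), neither piece even belongs to $L^A(\R^d)$, so the bounds you assert for them cannot hold — in particular, ``outside that ball the kernel is bounded and the contribution is estimated by $\|\chi_{B_{2R}}\|_{L^{A'}}\|D^s u\|_{L^A}$'' is only a pointwise bound, which gives no control of an $L^A$-norm over an unbounded region, and the same problem affects your tail term after ``taking the $L^A$-norm in $x$''. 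The finiteness of $\|D^\sigma u\|_{L^A(\R^d)}$ at the far field rests on cancellation ($\int_{\R^d}D^s u\,dy=0$; equivalently, on representing $D^\sigma u$ directly through $u$ with kernel decay $|x-y|^{-(d+\sigma)}$), which your splitting destroys. The paper's proof avoids this by splitting the $x$-domain first: on the bounded set $\Omega'=\Omega+B_1$ it uses $D^\sigma u=I_\delta D^s u$, $\delta=s-\sigma$, with the Poincar\'e-proof estimates, while on the unbounded set $(\Omega')^c$ it uses $|D^\sigma u(x)|\le C\int_\Omega |u(y)|\,|x-y|^{-(d+\sigma)}\,dy$, whose $L^A$-norm in $x$ is finite and produces exactly the factors $\tfrac1\sigma$ and $\|\chi_\Omega\|_{L^{A'}}$, followed by Poincar\'e. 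Your key devices must be deployed inside that $x$-splitting, not the $y$-splitting.

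There is a second, quantitative gap: you bound the normalizing constant of $I_{s-\sigma}$ only by $\sup_{t\in[-1,1]}\mu(d,t)<\infty$, but the dyadic/geometric-series estimate for the local part then leaves a factor $\tfrac{1}{1-2^{-\delta}}\approx\tfrac2\delta$ with $\delta=s-\sigma$, which blows up as $s\to\sigma$ and cannot be reduced to $C\bigl(1+\tfrac{1}{1-2^{-\sigma}}\bigr)$ by ``elementary manipulations'' (fix $\sigma$ and let $s\downarrow\sigma$). The paper cancels it using the sharp asymptotics $\mu(d,1-\delta)\le C\delta$ from \cite[Lemma 2.4]{bellido2020gamma}, so that $\tfrac{\mu(d,1-\delta)}{d-\delta}\cdot\tfrac2\delta\le\tfrac{C}{d-1+\sigma}$; without this ingredient your constant depends on $s-\sigma$, contradicting the claimed independence of $s$.
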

\begin{proof}
    Let $u\in C^\infty_c(\Omega)$. Consider the set $\Omega'=\Omega+B_1$ and a constant $R>1$ sufficiently large so that $\Omega'\subset B_R(0)$. The idea is to prove that 
    \begin{equation*}
        \|D^\sigma u\|_{L^A(\R^d)}\leq \|D^\sigma u\|_{L^A(\Omega')}+\|D^\sigma u\|_{L^A((\Omega')^c)}\leq C\|D^s u\|_{L^A(\R^d)}.
    \end{equation*}
    We start by proving that $\|D^\sigma u\|_{L^A(\Omega')}\leq C\|D^s u\|_{L^A(\R^d)}$. From the semigroup property of the Riesz potential, we can write
    \begin{equation*}
        D^\sigma u=I_{\delta}D^s u
    \end{equation*}
    where $\delta=s-\sigma>0$. Following the same steps as in the proof of Theorem \ref{thm:poincare_inequality} and using the fact that $\frac{1}{1-2^{-\delta}}\leq \frac{2}{\delta}$ and \cite[Lemma 2.4]{bellido2020gamma} for the existence of a constant $C>0$ independent of $\delta$ such that $\mu(d,1-\delta)<C\delta$,
    \begin{align*}
        &\|D^\sigma u\|_{L^A(\Omega')}\\
        &\quad\leq \frac{\mu(d,1-\delta)}{d-\delta}\left(\left\|\int_{B_{2R}}{\frac{|D^s u(y)|}{|x-y|^{d-\delta}}}\,dy\right\|_{L^A(\Omega')}+\left\|\int_{B^c_{2R}}{\frac{|D^s u(y)|}{|x-y|^{d-\delta}}}\,dy\right\|_{L^A(\Omega')}\right)\\
        &\quad\leq C\frac{\mu(d,1-\delta)}{d-\delta}\left(\frac{R}{1-2^{-\delta}}\|D^s u\|_{L^A(\R^d)}+R^{-d-\sigma}\|u\|_{L^A(\Omega)}\right)\\
        &\quad\leq C\frac{\mu(d,1-\delta)}{d-\delta}\left(\frac{1}{1-2^{-\delta}}+\frac{1}{1-2^{-\sigma}}\right)\|D^s u\|_{L^A(\R^d)}\\
        &\quad\leq C\frac{\mu(d,1-\delta)}{d-1+\sigma}\left(\frac{2}{\delta}+\frac{1}{1-2^{-\sigma}}\right)\|D^s u\|_{L^A(\R^d)}\\
        &\quad\leq \frac{C}{d-1+\sigma}\left(1+\frac{1}{1-2^{-\sigma}}\right)\|D^s u\|_{L^A(\R^d)}.
    \end{align*}
    To prove that $\|D^\sigma u\|_{L^A((\Omega')^c)}\leq C\|D^s u\|_{L^A(\R^d)}$ holds, we first apply Jensen's inequality, Lemma \ref{lemma:relation_classical_and_orlicz}, and \cite[Lemma 3.7.7]{harjulehto2019orlicz}
    \begin{equation}\label{eq:estimate_poincare_complement_augmented_domain}
        \begin{aligned}
            &\|D^\sigma u\|_{L^A((\Omega')^c)}\\
            &\leq C\left\|\int_\Omega{\frac{|u(y)|}{|x-y|^{d+\sigma}}}\,dy\right\|_{L^A((\Omega')^c)}
            \leq C\int_{\Omega}{|u(y)|\left\|\frac{1}{|x-y|^{(d+\sigma)}}\right\|_{L^A((\Omega')^c)}}\,dy\\
            &\leq C\int_{\Omega}{|u(y)|\max\left\{\left\|\frac{1}{|x-y|^{(d+\sigma)}}\right\|_{L^p((\Omega')^c)},\left\|\frac{1}{|x-y|^{(d+\sigma)}}\right\|_{L^q((\Omega')^c)} \right\}}\,dy.
        \end{aligned}
    \end{equation}
    Then, using the fact that $(\Omega')^c-y\leq B_1^c$, we obtain that
    \begin{align*}
        &\max\left\{\left\|\frac{1}{|x-y|^{(d+\sigma)}}\right\|_{L^p((\Omega')^c)},\left\|\frac{1}{|x-y|^{(d+\sigma)}}\right\|_{L^q((\Omega')^c)} \right\}\\
        &\qquad\qquad\leq \max\left\{\left\|\frac{1}{|x|^{(d+\sigma)}}\right\|_{L^p(B_1^c)},\left\|\frac{1}{|x|^{(d+\sigma)}}\right\|_{L^q(B_1^c)} \right\}\\
        &\qquad\qquad\leq \max\left\{\left(\frac{C}{(d+\sigma)p-d}\right)^{1/p},\left(\frac{C}{(d+\sigma)q-d}\right)^{1/q} \right\}\\
        &\qquad\qquad\leq \max\left\{\left(\frac{C}{\sigma}\right)^{1/p},\left(\frac{C}{\sigma}\right)^{1/q} \right\}\leq \max\left\{1,\frac{C}{\sigma}\right\}.
    \end{align*}
    Applying this estimate to \eqref{eq:estimate_poincare_complement_augmented_domain}, yields
    \begin{align*}
        \|D^\sigma u\|_{L^A(\Omega_C^c)}&\leq \max\{1,\frac{C}{\sigma}\}\int_\Omega{|u(y)|}\,dy\leq 2\max\{1,\frac{C}{\sigma}\}\|\chi_\Omega\|_{L^{A'}}\|u\|_{L^A(\Omega)}\\
        &\leq \frac{C}{\sigma}\|\chi_\Omega\|_{L^{A'}}\frac{1}{1-2^{-\sigma}}\|D^s u\|_{L^A(\R^d)}.
    \end{align*}
\end{proof}

We also present an interpolation inequality, of the Gagliardo-Nirenberg type, which generalizes \cite[Theorem 4.5]{brue2022AsymptoticsII} for $\Lambda^{s,A}_0(\Omega)$. To prove such inequality, we make use of the following lemma:

\begin{lemma}\label{lemma:mihklin_hormander_for_orlicz}
    Let $0\leq \sigma\leq s\leq 1$ and $A\in\Phi(\R^d)$ satisfying \hyperref[def:a0]{(A0)}, \hyperref[def:a1]{(A1)}, \hyperref[def:a2]{(A2)}, \inc{p} and \dec{q}. Consider the Fourier symbol $m_{s,\sigma}(\xi)=\frac{|\xi|^\sigma}{1+|\xi|^s}$ and let us denote by $T_{s,\sigma}\varphi= \varphi*\mathcal{F}^{-1}(m_{s,\sigma})$ for $\varphi\in\mathcal{S}(\R^d)$ the operator associated to the symbol. Then,
    \begin{equation*}
        \|T_{s,\sigma} v\|_{L^A(\R^d)}\leq C\|v\|_{L^A(\R^d)},\quad\forall v\in L^A(\R^d).
    \end{equation*}
\end{lemma}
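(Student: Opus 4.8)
The plan is to reduce the statement to the multiplier theorem for generalized Orlicz spaces, which for Calderón–Zygmund type operators is exactly the content of \cite[Theorem 5.3.3 or Corollary 5.4.3]{harjulehto2019orlicz}: it suffices to show that $m_{s,\sigma}$ is a Hörmander–Mihlin multiplier with constants uniform in $s,\sigma$, i.e. that for every multi-index $\alpha$ with $|\alpha|\leq \lfloor d/2\rfloor+1$ one has
\begin{equation*}
    |\partial^\alpha m_{s,\sigma}(\xi)|\leq C_\alpha|\xi|^{-|\alpha|}\qquad\text{for all }\xi\neq 0,
\end{equation*}
with $C_\alpha$ independent of $\sigma\in[0,1]$ and $s\in[0,1]$ (the regime $\sigma\leq s$ being what makes this bounded). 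Once this pointwise symbol estimate is in hand, the associated convolution operator $T_{s,\sigma}$ has a kernel satisfying \eqref{eq:estimate_function_strong_mihklin} and \eqref{eq:estimate_derivative_strong_mihklin} — this is the classical passage from a Mihlin multiplier to a Calderón–Zygmund kernel — so that $T_{s,\sigma}$ is a Calderón–Zygmund singular integral operator in the sense defined above \eqref{eq:estimate_function_strong_mihklin}, and the boundedness $\|T_{s,\sigma}v\|_{L^A(\R^d)}\leq C\|v\|_{L^A(\R^d)}$ follows directly from the harmonic-analytic machinery of \cite{harjulehto2019orlicz}, which applies because $A$ is assumed to satisfy \hyperref[def:a0]{(A0)}, \hyperref[def:a1]{(A1)}, \hyperref[def:a2]{(A2)}, \inc{p} and \dec{q}.

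**Main steps.** First I would treat the symbol estimate on the two dyadic regions $|\xi|\leq 1$ and $|\xi|\geq 1$ separately. For $|\xi|\geq 1$ one has $1+|\xi|^s\sim |\xi|^s$ uniformly in $s$, so $m_{s,\sigma}(\xi)\sim |\xi|^{\sigma-s}$, which is bounded since $\sigma\leq s$; differentiating, each derivative in $\xi$ produces a factor of size $|\xi|^{-1}$ times bounded combinations of $|\xi|^\sigma$, $|\xi|^s$, and powers of $1+|\xi|^s$, and the homogeneity degrees always work out because $\sigma,s\in[0,1]$ keep the exponents in a bounded range — this is a routine Leibniz/quotient-rule induction, and the constants depend only on $d$. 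For $|\xi|\leq 1$ one has $1+|\xi|^s\sim 1$, so $m_{s,\sigma}(\xi)\sim|\xi|^\sigma\leq 1$; here the only subtlety is that $|\xi|^\sigma$ is not smooth at the origin, but its $\alpha$-th derivative is bounded by $C_\alpha|\xi|^{\sigma-|\alpha|}\leq C_\alpha|\xi|^{-|\alpha|}$ on $|\xi|\leq 1$, again with $C_\alpha$ independent of $\sigma$ (one can check $\sigma(\sigma-1)\cdots(\sigma-k+1)$ is bounded for $\sigma\in[0,1]$). Combining the two regions gives the uniform Mihlin bound. Second, I would invoke the standard kernel estimates that turn this symbol bound into \eqref{eq:estimate_function_strong_mihklin}–\eqref{eq:estimate_derivative_strong_mihklin}. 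Third, I would apply \cite[Corollary 5.4.3]{harjulehto2019orlicz} (or the relevant multiplier/CZO boundedness result cited in the excerpt) to conclude.

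**Main obstacle.** The only genuine difficulty is keeping the constants uniform as $s\to 0$, $s\to 1$, $\sigma\to 0$, or $\sigma\to s$. The delicate point is the low-frequency region, where $|\xi|^\sigma$ degenerates: as $\sigma\to 0$ the symbol tends to $\chi_{\R^d}$ (a bounded multiplier), and one must make sure no derivative bound blows up — this is handled by the observation that the "bad" prefactors $\log|\xi|$ that appear from differentiating $|\xi|^\sigma$ always come multiplied by a positive power of $|\xi|$ or are controlled on the bounded region $|\xi|\le 1$, and by noting $\sigma\mapsto\binom{\sigma}{k}$ stays bounded. A secondary bookkeeping point is that when $\sigma=s$ (allowed by the hypothesis $0\le\sigma\le s\le1$) the high-frequency behaviour is $m_{s,s}(\xi)\to 1$ as $|\xi|\to\infty$, which is still a perfectly good bounded multiplier, so no issue arises there either. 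Everything else is a mechanical Mihlin-type verification, and I would not carry out the full Leibniz expansion in the write-up, only indicate the two-region split and the uniformity mechanism.
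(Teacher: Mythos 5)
Your proposal takes essentially the same route as the paper: prove the uniform Mihlin-type symbol bound $|\partial^\alpha m_{s,\sigma}(\xi)|\leq C_\alpha|\xi|^{-|\alpha|}$ with constants independent of $0\leq\sigma\leq s\leq 1$ (the paper does this globally via Fa\`a di Bruno's formula and an induction on the derivatives of $y\mapsto y^\sigma/(1+y^s)$ rather than your split into $|\xi|\leq 1$ and $|\xi|\geq 1$, but the substance is the same), then pass to a Calder\'on--Zygmund kernel satisfying \eqref{eq:estimate_function_strong_mihklin}--\eqref{eq:estimate_derivative_strong_mihklin} (the paper cites \cite[Proposition 4.27]{abels2012pseudodifferential} for this step, which requires the symbol bounds for all orders up to roughly $d+2$ --- your computation does deliver these, so the restriction to $|\alpha|\leq\lfloor d/2\rfloor+1$ in your write-up is only a harmless misstatement), and conclude by \cite[Corollary 5.4.3]{harjulehto2019orlicz}. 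This is correct and matches the paper's proof.
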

\begin{proof}
    If we prove that $T_{s,\sigma}$ is a Calderón-Zygmund singular integral operator, then one just needs to apply \cite[Corollary 5.4.3]{harjulehto2019orlicz} to prove the lemma.

    In order to prove that $T_{s,\sigma}$ is a Calderón-Zygmund singular integral operator, we start by recalling the Faà di Bruno's formula for multivariable functions, \cite{hardy2006Combinatorics}. Let $f:\mathrm{dom}(f)\subset\R\to\R$ and $g:\mathrm{dom}(g)\subset\R^d\to\mathrm{dom}(f)$ both $N$-times continuously differentiable, then
    \begin{equation}\label{eq:faa_di_bruno_formula}
        \partial^\alpha(f\circ g)(\xi)=\sum_{\pi\in\Pi_\alpha}{f^{(|\pi|)}(g(\xi))\prod_{\beta\in\pi}{\partial^\beta g(\xi)}}
    \end{equation}
    where $\Pi_\alpha$ is the set of all partitions of the multiindex $\alpha$, and $\beta\in \pi$ runs over all the multiindices that form the partition $\pi$. For our particular case, we set $f:\R^+\to\R$ and $g:\R^d\setminus\{0\}\to\R^+$ to be defined as
    \begin{equation*}
        f(y)=\frac{y^\sigma}{1+y^s},\quad \mbox{ and }\quad g(\xi)=|\xi|.
    \end{equation*}
    In order to compute \eqref{eq:faa_di_bruno_formula}, first we need to study the derivatives of $f$ and $g$.

    For $g$, it is well-known that for each multiindex $\beta$ there exists a constant $C{|\beta|}>0$ such that
    \begin{equation*}
        |\partial^\beta g(\xi)|\leq C_{|\beta|}|\xi|^{1-|\beta|},\quad\forall \xi\in\R^d\setminus\{0\}.
    \end{equation*}
    Hence, there exits a constant $C_\pi>0$ dependent on the partition $\pi$ such that
    \begin{equation}\label{eq:estimate_for_faa_product}
        \left|\prod_{\beta\in\pi}{\partial^\beta g(\xi)}\right|\leq C_{\pi}|\xi|^{|\pi|-|\alpha|}.
    \end{equation}

    For the derivatives of $f$ we observe that from the generalized product rule,
    \begin{equation*}
        (y^\sigma)^{(|\pi|)}=\left(\frac{y^\sigma}{1+y^s}(1+y^s)\right)^{(|\pi|)}=\sum_{j=0}^{|\pi|}{{|\pi| \choose j} \left(\frac{y^\sigma}{1+y^s}\right)^{(|\pi|-j)}(1+y^s)^{(j)}},
    \end{equation*}
    and hence
    \begin{equation*}
        \left(\frac{y^\sigma}{1+y^s}\right)^{(|\pi|)}=\frac{1}{1+y^s}\left((y^\sigma)^{(|\pi|}-\sum_{j=1}^{|\pi|}{{|\pi|\choose j}\left(\frac{y^\sigma}{1+y^s}\right)^{(|\pi|-j)}(1+y^s)^{(j)}}\right).
    \end{equation*}
    This recursive formula for the higher derivative of the quotient allows us to obtain estimates for $\left(\frac{y^\sigma}{1+y^s}\right)^{(|\pi|)}$ through induction. In fact, observe that for $y\in\R^+$ and $k=0$, we have 
    \begin{equation*}
        \left(\frac{y^\sigma}{1+y^s}\right)^{(0)}\leq 2.
    \end{equation*}
    Let us then suppose that for all $k<n$ there exists a constant $C_k>0$ independent of $s$ and $\sigma$, but dependent of $k$ such that 
    \begin{equation*}
        \left(\frac{y^\sigma}{1+y^s}\right)^{(k)}\leq C_k y^{-k}.
    \end{equation*}
    Then,
    \begin{align*}
        &\left(\frac{y^\sigma}{1+y^s}\right)^{(n)}=\frac{1}{1+y^s}\left((y^\sigma)^{(n)}-\sum_{j=1}^{n}{{n\choose j}\left(\frac{y^\sigma}{1+y^s}\right)^{(n-j)}(1+y^s)^{(j)}}\right)\\
        &\qquad\leq \frac{1}{1+y^s}\left(\prod_{p=0}^{n-1}{(\sigma-p)}y^{\sigma-n}-\sum_{j=1}^{n}{{n\choose j}C_{n-j}y^{j-n}\prod_{q=0}^{j-1}{(s-q)}y^{s-j}}\right)\\
        &\qquad\leq \left(\prod_{p=0}^{n-1}{(\sigma-p)}y^{-n}-\sum_{j=1}^{n}{{n\choose j}C_{n-j}y^{j-n}\prod_{q=0}^{j-1}{(s-q)}y^{-j}}\right)\\
        &\qquad\leq \left(\left(\sup_{\sigma\in[0,1]}{\prod_{p=0}^{n-1}{(\sigma-p)}}\right)+\sum_{j=1}^{n}{{n\choose j}C_{n-j}\left(\sup_{s\in[0,1]}{\prod_{q=0}^{j-1}{(s-q)}}\right)}\right)y^{-n}\\
        &\qquad = C_n y^{-n}.
    \end{align*}
    Hence for every partition $\pi$,
    \begin{equation}\label{eq:estimate_for_faa_quotient}
        \left(\frac{y^\sigma}{1+y^s}\right)^{(|\pi|)}\leq C_{|\pi|} y^{-|\pi|}.
    \end{equation}
    Applying both \eqref{eq:estimate_for_faa_product} and \eqref{eq:estimate_for_faa_quotient} to \eqref{eq:faa_di_bruno_formula}, we obtain that every multiindex $\alpha$,
    \begin{equation*}
        |\partial^\alpha m_{s,\sigma}(\xi)|\leq \sum_{\pi\in\Pi_\alpha}{C_{\pi}|\xi|^{-|\pi|}|\xi|^{|\pi|-|\alpha|}}=C_\alpha |\xi|^{-|\alpha|}, \quad\forall \xi\in\R^d\setminus\{0\}.
    \end{equation*}
    As a simple consequence of this estimates, we can use \cite[Proposition 4.27]{abels2012pseudodifferential} to obtain a kernel $k_{s,\sigma}\in C^1(\R^d\setminus\{0\})$ to the operator $T_{s,\sigma}$, i.e.,
    \begin{equation*}
        T_{s,\sigma}\varphi(x)=\int_{\R^d}{k_{s,\sigma}(x-y)\varphi(y)}\,dy, \quad \mbox{ for all } x\notin \mathrm{supp}{(\varphi)}, \mbox{ with } \varphi\in\mathcal{S}(\R^d), 
    \end{equation*}
    satisfying the estimates
    \begin{equation}\label{eq:partial_derivatives_kernel}
        |\partial^\gamma k_{s,\sigma}(z)|\leq C|z|^{-d-|\gamma|} \quad \mbox{ for all } z\neq 0 \mbox{ and } |\gamma|\leq 1.
    \end{equation}
    By defining $K_{s,\sigma}(x,y)=k_{s,\sigma}(x-y)$, we can observe that \eqref{eq:estimate_function_strong_mihklin} follows immediatly from \eqref{eq:partial_derivatives_kernel} with $|\gamma|=0$, while \eqref{eq:estimate_derivative_strong_mihklin} follows from
    the fact that for $|h|\leq \frac{1}{2}|x-y|$ and $t\in[-1,1]$ we have $|x-y-th|\geq \frac{1}{2}|x-y|$ and consequently
    \begin{align*}
        &|k_{s,\sigma}(x-y)-k_{s,\sigma}(x-y-h)|+|k_{s,\sigma}(x-y)-k_{s,\sigma}(x+h-y)|\\
        &\qquad\qquad\leq \sup_{\theta\in[0,1]}{|Dk_{s,\sigma}(x-y-\theta h)|}|h|+\sup_{\theta\in[0,1]}{|Dk_{s,\sigma}(x-y+\theta h)|}|h|\\
        &\qquad\qquad\leq C\frac{|h|}{|x-y|^{d+1}}.
    \end{align*}
\end{proof}

\begin{proposition}[Interpolation inequality]
    Let $A\in\Phi(\R^d)$ satisfying \hyperref[def:a0]{(A0)}, \hyperref[def:a1]{(A1)}, \hyperref[def:a2]{(A2)}, \inc{p} and \dec{q} with $1<p\leq q<\infty$. For every $0\leq r\leq s\leq t\leq 1$ and every $u\in \Lambda_0^{t,A}(\Omega)$, 
    \begin{equation}\label{eq:interpolation_inequality}
        \|D^s u\|_{L^A(\R^d)}\leq C\|D^r u\|^{\frac{t-s}{t-r}}_{L^A(\R^d)}\|D^{t} u\|^{\frac{s-r}{t-r}}_{L^A(\R^d)}.
    \end{equation}
    And in particular, when $r=0$, we have
    \begin{equation}\label{eq:interpolation_inequality_with_0}
        \|D^s u\|_{L^A(\R^d)}\leq C\|u\|^{\frac{t-s}{t}}_{L^A(\R^d)}\|D^{t} u\|^{\frac{s}{t}}_{L^A(\R^d)}.
    \end{equation}
\end{proposition}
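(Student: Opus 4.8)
The plan is to reduce the inequality to a one–parameter family of Fourier–multiplier bounds, each produced by rescaling one of the Calderón–Zygmund operators of Lemma~\ref{lemma:mihklin_hormander_for_orlicz}, and then to optimize over the scaling parameter. First I would prove \eqref{eq:interpolation_inequality} for $u\in C^\infty_c(\Omega)$ and obtain the general case by density, applying the estimate to differences of an approximating sequence and invoking the continuity of $D^{r}$ and $D^{s}$ on $\Lambda^{t,A}_0(\Omega)$ (which follows from Theorem~\ref{thm:spaces_decrease}, resp.\ from the $L^A$–boundedness of $\mathcal R=-D^0$ when $r=0$). The cases $s=r$ and $s=t$ are trivial, since then the right–hand side of \eqref{eq:interpolation_inequality} reduces to $C\|D^s u\|_{L^A(\R^d)}$, so I may assume $r<s<t$; moreover, writing on the Fourier side $\widehat{D^\tau u}(\xi)=i\frac{\xi}{|\xi|}|\xi|^\tau\hat u(\xi)$ up to a normalizing factor lying in a fixed compact subset of $(0,\infty)$ for $\tau\in[0,1]$ (absorbed henceforth into the constants), we see that $D^r u=0$ or $D^t u=0$ would force $u\equiv 0$, so both $\|D^r u\|_{L^A(\R^d)}$ and $\|D^t u\|_{L^A(\R^d)}$ may be assumed positive.

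Fix $u\in C^\infty_c(\Omega)$ with $r<s<t$. Since the angular factor $i\frac{\xi}{|\xi|}\hat u(\xi)$ is common to the three transforms, $\widehat{D^t u}(\xi)=|\xi|^{t-r}\,\widehat{D^r u}(\xi)$ (up to the bounded normalizing constants), and for each $\rho>0$ one has the elementary identity
\begin{equation*}
    |\xi|^{s-r}=\frac{(\rho|\xi|)^{s-r}}{1+(\rho|\xi|)^{t-r}}\left(\rho^{\,r-s}+\rho^{\,t-s}|\xi|^{t-r}\right).
\end{equation*}
Denoting by $m_{t-r,\,s-r}$ the symbol of Lemma~\ref{lemma:mihklin_hormander_for_orlicz} (legitimate because $0\le s-r\le t-r\le 1$), this identity translates into
\begin{equation*}
    D^s u=T^{\rho}_{t-r,\,s-r}\!\left(\rho^{\,r-s}\,D^r u+\rho^{\,t-s}\,D^t u\right),
\end{equation*}
where $T^{\rho}_{t-r,\,s-r}$ is the convolution operator whose kernel is $\rho^{-d}\,k_{t-r,\,s-r}(\rho^{-1}\,\cdot\,)$, with $k_{t-r,\,s-r}$ the Calderón–Zygmund kernel constructed in the proof of Lemma~\ref{lemma:mihklin_hormander_for_orlicz}. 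As the bounds \eqref{eq:estimate_function_strong_mihklin}--\eqref{eq:estimate_derivative_strong_mihklin} are invariant under the dilation $k\mapsto\rho^{-d}k(\rho^{-1}\,\cdot\,)$, the kernels $\rho^{-d}k_{t-r,\,s-r}(\rho^{-1}\,\cdot\,)$ satisfy them with constants independent of $\rho$; hence $\sup_{\rho>0}\|T^{\rho}_{t-r,\,s-r}\|_{L^A(\R^d)\to L^A(\R^d)}<\infty$ by \cite[Corollary~5.4.3]{harjulehto2019orlicz}, and therefore
\begin{equation*}
    \|D^s u\|_{L^A(\R^d)}\le C\left(\rho^{\,r-s}\,\|D^r u\|_{L^A(\R^d)}+\rho^{\,t-s}\,\|D^t u\|_{L^A(\R^d)}\right)\qquad\text{for every }\rho>0.
\end{equation*}

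Choosing $\rho=\big(\|D^r u\|_{L^A(\R^d)}\,/\,\|D^t u\|_{L^A(\R^d)}\big)^{1/(t-r)}$ to equalize the two summands gives
\begin{equation*}
    \|D^s u\|_{L^A(\R^d)}\le 2C\,\|D^r u\|_{L^A(\R^d)}^{\frac{t-s}{t-r}}\,\|D^t u\|_{L^A(\R^d)}^{\frac{s-r}{t-r}},
\end{equation*}
which is \eqref{eq:interpolation_inequality}. The estimate \eqref{eq:interpolation_inequality_with_0} is the case $r=0$: one repeats the computation with the symbol $i\frac{\xi}{|\xi|}\hat u(\xi)$ (that of $-\mathcal R u$, up to a constant) in the role of $\widehat{D^0 u}$, and replaces $\|D^0 u\|_{L^A(\R^d)}=\|\mathcal R u\|_{L^A(\R^d)}$ by $\|u\|_{L^A(\R^d)}$ via the $L^A$–boundedness of the Riesz transform. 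The step I expect to require the most care is the uniform–in–$\rho$ boundedness of $T^{\rho}_{t-r,\,s-r}$ on $L^A(\R^d)$; but this is essentially automatic, precisely because the Calderón–Zygmund kernel conditions \eqref{eq:estimate_function_strong_mihklin}--\eqref{eq:estimate_derivative_strong_mihklin} are scale-invariant, so the bound from \cite[Corollary~5.4.3]{harjulehto2019orlicz} does not degenerate as $\rho\to0$ or $\rho\to\infty$. Everything else is the algebra of the multiplier identity together with the one-variable optimization.
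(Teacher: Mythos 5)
Your argument is correct, and it rests on the same engine as the paper's proof --- the multiplier $m_{\cdot,\cdot}$ of Lemma \ref{lemma:mihklin_hormander_for_orlicz}, its Calder\'on--Zygmund bound on $L^A$ via \cite[Corollary 5.4.3]{harjulehto2019orlicz}, and the identity $D^\tau u=\mathcal R(-\Delta)^{\tau/2}u$ --- but the decisive step is implemented differently. The paper first proves the additive bound $\|(-\Delta)^{s/2}u\|_{L^A}\le C(\|u\|_{L^A}+\|(-\Delta)^{t/2}u\|_{L^A})$ with the \emph{fixed} operator $T_{t,s}$ and then passes to the multiplicative form by ``optimizing the dilation of $u$''; you instead keep $u$ fixed and dilate the symbol, using $|\xi|^{s-r}=m_{t-r,s-r}(\rho\xi)\bigl(\rho^{r-s}+\rho^{t-s}|\xi|^{t-r}\bigr)$, so the optimization happens in the parameter $\rho$ of a family of operators whose Calder\'on--Zygmund constants are dilation-invariant. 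This buys two things: (i) it avoids rescaling $u$ inside an $L^A$-norm, which in a generalized Orlicz space with $x$-dependent $A$ is not homogeneous under dilations, so your route makes the ``optimization'' step unambiguous where the paper's is only sketched; (ii) it treats general $r$ directly, rather than reducing to $r=0$ and invoking $(-\Delta)^{t/2}D^r u=\mathcal R(-\Delta)^{(r+t)/2}u$ as the paper does. Your key claim, uniformity in $\rho$ of the $L^A\to L^A$ norm, is indeed safe: $m_{t-r,s-r}(\rho\cdot)$ satisfies the Mikhlin bounds $|\partial^\alpha m(\rho\xi)|\le C_\alpha|\xi|^{-|\alpha|}$ with the same constants, so the whole proof of Lemma \ref{lemma:mihklin_hormander_for_orlicz} reruns verbatim and the constant in \cite[Corollary 5.4.3]{harjulehto2019orlicz} depends only on those kernel constants and on the fixed assumptions on $A$. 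One small touch-up: for the density step do not lean on Theorem \ref{thm:spaces_decrease} (it assumes $\Omega$ bounded, which the proposition does not); it is cleaner to note that the case $r=0$ applied to differences of an approximating sequence already shows $D^r u_n$ and $D^s u_n$ are Cauchy in $L^A$, after which the general inequality passes to the limit.
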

\begin{proof}
    This proof is inspired by the proof of \cite[Theorem 4.5]{brue2022AsymptoticsII}. Since $C^\infty_c(\Omega)$ is dense in $\Lambda^{s,A}_0(\Omega)$, we can assume that $u\in C^\infty_c(\Omega)$. From the definition of $T_{t,s}$ in terms of the Fourier symbol $m_{t,s}$, we have that $(-\Delta)^{s/2}f=T_{t,s}\circ(Id+(-\Delta)^{t/2})$. Hence, there exists a constant $C>0$ independent of $s$ such that
    \begin{align*}
        \|(-\Delta)^{s/2}u\|_{L^A(\R^d)}&=\|T_{t,s}\circ(Id+(-\Delta)^{t/2})u\|_{L^A(\R^d)}\\
        &\leq C\|u+(-\Delta)^{t/2}u\|_{L^A(\R^d)}\\
        &\leq C(\|u\|_{L^A(\R^d)}+\|(-\Delta)^{t/2}u\|_{L^A(\R^d)})\\
        &\leq C\|u\|^{\frac{t-s}{t}}_{L^A(\R^d)}\|(-\Delta)^{t/2}u\|^{\frac{s}{t}}_{L^A(\R^d)},
    \end{align*}
    where we used Lemma \eqref{lemma:mihklin_hormander_for_orlicz} for the first inequality, and an argument where we optimize the dilation of $u$ for the last inequality.
    From \eqref{eq:fundamental_theorem_calculus} and the fact that the Riesz transform satisfies $\mathcal{R}\cdot\mathcal{R}=-\mathrm{Id}$ by \cite[Proposition 5.1.16]{grafakos2014classical}, then $D^s u=\mathcal{R}(-\Delta)^{s/2}u$. Moreover, because $\mathcal{R}$ is a Calderón-Zygmund singular integral operator, then \cite[Corollary 5.4.3]{harjulehto2019orlicz} allows us to conclude that
    \begin{equation*}
        \|D^\sigma u\|_{L^A(\R^d)}\leq C\|\mathcal{R}u\|^{\frac{s-\sigma}{s}}_{L^A(\R^d)}\|D^s u\|^{\frac{s}{\sigma}}_{L^A(\R^d)}\leq C\|u\|^{\frac{s-\sigma}{s}}_{L^A(\R^d)}\|D^s u\|^{\frac{s}{\sigma}}_{L^A(\R^d)}.
    \end{equation*}
    The inequality \eqref{eq:interpolation_inequality} can be obtained in a similar way, if we also use the identity $(-\Delta)^{t/2}D^r u=\mathcal{R}(-\Delta)^{\frac{r+t}{2}}u$ for test functions $u\in C^\infty_c(\Omega)$. Compare with the proof of \cite[Theorem 4.5]{brue2022AsymptoticsII}.
\end{proof}

As a consequence of this interpolation inequality, we prove two approximation results for the fractional gradient $D^s u$ by other fractional gradients. The first one regards the continuity of the application that maps $s\in [0,1]$ into $D^s u$ for some \textit{a priori} fixed sufficiently regular function $u$, while the second one is concerned with the existence of a sequence $\{u_n\}$ such that $D^{s_n}u_n$ converges to $D^\sigma u$ in $L^A$ for any given $u\in\Lambda^{\sigma,A}_0(\Omega)$ and $s_n\to\sigma$.

\begin{proposition}\label{prop:conv_frac_grad_fixed_function_stationary}
    Let $A\in\Phi(\R^d)$ satisfying \hyperref[def:a0]{(A0)}, \hyperref[def:a1]{(A1)}, \hyperref[def:a2]{(A2)}, \inc{p} and \dec{q} with $1<p\leq q<\infty$. Let us also consider a sequence $\{s_n\}\subset [0,1]$ such that $s_n\to\sigma\in[0,1]$ and $\overline{s}=\sup\{s_n\}$. If $u\in \Lambda^{\overline{s},A}_0(\Omega)$, then $D^{s_n} u\to D^\sigma u$ in $L^A(\R^d;\R^d)$.
\end{proposition}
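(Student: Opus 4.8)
The plan is to reduce the statement, by density, to the continuity of $s\mapsto D^s\varphi$ in $L^A$ for a \emph{single} test function $\varphi$, and to settle that case by dominated convergence in $L^A$.

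We may assume $\overline{s}>0$, since $\overline{s}=0$ forces $s_n\equiv\sigma\equiv0$. Fix $\varepsilon>0$; as $C^\infty_c(\Omega)$ is dense in $\Lambda^{\overline{s},A}_0(\Omega)$ by definition, I would pick $\varphi\in C^\infty_c(\Omega)$ with $\|u-\varphi\|_{\Lambda^{\overline{s},A}_0(\Omega)}$ small and split
\[
\|D^{s_n}u-D^\sigma u\|_{L^A(\R^d;\R^d)}\leq\|D^{s_n}(u-\varphi)\|_{L^A}+\|D^{s_n}\varphi-D^\sigma\varphi\|_{L^A}+\|D^\sigma(\varphi-u)\|_{L^A}.
\]
For the first and third terms, note $s_n,\sigma\in[0,\overline{s}]$, so the interpolation inequality \eqref{eq:interpolation_inequality_with_0} with $t=\overline{s}$ — whose constant is independent of the intermediate exponent, as is clear from its proof (via Lemma~\ref{lemma:mihklin_hormander_for_orlicz} and \cite[Corollary 5.4.3]{harjulehto2019orlicz}) — gives
\[
\|D^{s_n}(u-\varphi)\|_{L^A}\leq C\,\|u-\varphi\|_{L^A}^{1-s_n/\overline{s}}\,\|D^{\overline{s}}(u-\varphi)\|_{L^A}^{s_n/\overline{s}}\leq C\,\|u-\varphi\|_{\Lambda^{\overline{s},A}_0(\Omega)},
\]
and likewise for $\|D^\sigma(\varphi-u)\|_{L^A}$; choosing $\varphi$ close enough to $u$ makes both terms $<\varepsilon/3$, uniformly in $n$. (Equivalently, one may bound $v\mapsto D^{t}v$ from $\Lambda^{\overline{s},A}_0(\Omega)$ to $L^A$ uniformly in $t\in[0,\overline{s}]$ by writing $D^{t}v=\pm\,\mathcal{R}\,(-\Delta)^{t/2}v$ with $(-\Delta)^{t/2}=T_{\overline{s},t}\circ(\mathrm{Id}+(-\Delta)^{\overline{s}/2})$ and invoking Lemma~\ref{lemma:mihklin_hormander_for_orlicz} and the $L^A$-boundedness of $\mathcal{R}$.)

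The remaining, and principal, task is to show $D^{s_n}\varphi\to D^\sigma\varphi$ in $L^A(\R^d;\R^d)$ for the now-fixed $\varphi$. On the Fourier side $D^s\varphi=\mathcal{F}^{-1}\!\big(i\tfrac{\xi}{|\xi|}|\xi|^{s}\,\mathcal{F}\varphi\big)$ (consistently with $D^0=-\mathcal{R}$), and $\big|i\tfrac{\xi}{|\xi|}|\xi|^{s}\mathcal{F}\varphi(\xi)\big|=|\xi|^{s}|\mathcal{F}\varphi(\xi)|\leq\max\{1,|\xi|\}\,|\mathcal{F}\varphi(\xi)|\in L^1(\R^d)$ uniformly in $s\in[0,1]$, so dominated convergence on the frequency side gives $\|D^{s_n}\varphi-D^\sigma\varphi\|_{L^\infty}\leq\big\|(|\xi|^{s_n}-|\xi|^{\sigma})\mathcal{F}\varphi\big\|_{L^1}\to0$, hence in particular $D^{s_n}\varphi\to D^\sigma\varphi$ pointwise. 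To pass to $L^A$ I would exhibit a majorant independent of $s$: the bound above yields $\sup_{s\in[0,1]}\|D^s\varphi\|_{L^\infty}<\infty$, while for $x$ outside a ball $B_{R_0}\supset\mathrm{supp}\,\varphi$ the representation $D^s\varphi(x)=-\mu(d,s)\int_{\R^d}\varphi(y)\,\tfrac{x-y}{|x-y|^{d+s+1}}\,dy$ together with $\sup_{s\in[-1,1]}\mu(d,s)<\infty$ gives $|D^s\varphi(x)|\leq C|x|^{-d}$ for $|x|\geq2R_0$, uniformly in $s$. Thus $|D^s\varphi|\leq G$ for every $s\in[0,1]$, where $G:=C\min\{1,(2R_0/|x|)^{d}\}\in L^p(\R^d)\cap L^q(\R^d)\subset L^A(\R^d)$ by Lemma~\ref{lemma:relation_classical_and_orlicz}. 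Since $A$ satisfies \dec{q} it obeys the $\Delta_2$-condition, so $A(x,|D^{s_n}\varphi-D^\sigma\varphi|)\leq A(x,2G)\leq K\,A(x,G)\in L^1(\R^d)$; the classical dominated convergence theorem then gives $J_A(D^{s_n}\varphi-D^\sigma\varphi)\to0$, whence $\|D^{s_n}\varphi-D^\sigma\varphi\|_{L^A}\to0$ by \eqref{eq:relation_norm_modular}. For $n$ large this makes the middle term $<\varepsilon/3$, and assembling the three estimates completes the proof.

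The hard part will be exactly this last step — the continuity in $s$ for the fixed test function, and within it the construction of an $s$-uniform majorant in $L^A$ (the pointwise convergence being immediate on the Fourier side). The subtlety is that the decay of $D^s\varphi$ at infinity degenerates to the borderline, non-integrable rate $|x|^{-d}$ as $s\to0$, and it is precisely here that the inclusion $L^p\cap L^q\subset L^A$ with $p>1$, i.e.\ condition \inc{p}, is needed to place the majorant in $L^A$. A minor secondary point is to justify the Fourier-multiplier manipulations on all of $\Lambda^{\overline{s},A}_0(\Omega)$ rather than only on $C^\infty_c(\Omega)$, which follows by density from the uniform operator bounds already used.
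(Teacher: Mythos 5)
Your proposal is correct, and its outer skeleton is exactly the paper's: approximate $u$ by a test function, control the two error terms $\|D^{s_n}(u-\varphi)\|_{L^A}$ and $\|D^{\sigma}(u-\varphi)\|_{L^A}$ by the interpolation inequality with $t=\overline{s}$ (using, as you note, that its constant is uniform in the intermediate exponent), and reduce everything to the convergence $D^{s_n}\varphi\to D^{\sigma}\varphi$ in $L^A$ for a fixed $\varphi\in C^\infty_c(\Omega)$. Where you genuinely diverge is in this last, core step. The paper does not prove it directly: it invokes the known test-function result of Bru\`e--Calzi--Comi--Stefani (Theorem C.1 of \cite{brue2022AsymptoticsII}) in both $L^p$ and $L^q$ and transfers the convergence to $L^A$ through the continuous embedding $L^p\cap L^q\subset L^A$ of Lemma \ref{lemma:relation_classical_and_orlicz}. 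You instead prove the test-function case from scratch: uniform convergence via the Fourier symbol $|\xi|^{s}\,i\xi/|\xi|$ and dominated convergence on the frequency side, an $s$-uniform majorant $G\simeq\min\{1,|x|^{-d}\}$ built from the $L^\infty$ bound near the support and the kernel decay $|x|^{-d-s}$ with $\sup_{s}\mu(d,s)<\infty$ away from it, and then dominated convergence at the level of the modular $J_A$, using \dec{q} ($\Delta_2$) to dominate $A(x,2G)$ and \eqref{eq:relation_norm_modular} to pass from $J_A\to0$ to norm convergence. Both routes ultimately lean on Lemma \ref{lemma:relation_classical_and_orlicz} (the paper to transfer norm convergence, you only to place $G\in L^p\cap L^q\subset L^A$), but yours is self-contained, avoids the external citation, and makes explicit where \inc{p}, i.e.\ $p>1$, is genuinely needed: the majorant's decay degenerates to the borderline rate $|x|^{-d}$ as $s\to0$, which is integrable to the $p$-th and $q$-th powers precisely because $p,q>1$. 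The price is a longer argument for a fact the paper simply imports; the gain is independence from \cite{brue2022AsymptoticsII} and a proof that lives entirely in the generalized Orlicz framework.
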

\begin{proof}
    The result was proven in \cite[Theorem C.1]{brue2022AsymptoticsII} for test functions in the case in which $A(t)=\frac{1}{p}|t|^p$. For the the more general class of functions that we are considering, the idea is to approximate $u$ by suitable test functions $\{u_k\}\subset C^\infty_c(\Omega)$ where we can use Lemma \ref{lemma:relation_classical_and_orlicz} to obtain convergence in $L^A$ from the convergence in $L^p$ and $L^q$.
    
    In fact, for the construction of the sequence of test functions, we observe that from the definition of $\Lambda^{\overline{s},A}_0(\Omega)$, there exists a sequence of test functions $\{u_k\}\subset C^\infty_c(\Omega)$ such that 
    \begin{equation*}
        \|u_k-u\|_{L^A(\R^d)}+\|D^{\overline{s}} u_k-D^{\overline{s}} u\|_{L^A(\R^d;\R^d)}\leq \frac{1}{2k}.
    \end{equation*}
    From triangle inequality and the interpolation inequality \eqref{eq:interpolation_inequality}, 
    \begin{align*}
        &\|D^{s_n} u-D^\sigma u\|_{L^A(\R^d;\R^d)}\\
        &\quad\leq \|D^{s_n} u-D^{s_n} u_k\|_{L^A(\R^d;\R^d)}+\|D^{s_n} u_k-D^\sigma u_k\|_{L^A(\R^d;\R^d)}\\
        &\qquad\quad+\|D^\sigma u_k-D^\sigma u\|_{L^A(\R^d;\R^d)}\\
        &\quad\leq C\|u-u_k\|^{\frac{\overline{s}-s_n}{\overline{s}}}_{L^A(\R^d;\R^d)}\|D^{\overline{s}} u-D^{\overline{s}} u_k\|^{\frac{s_n}{\overline{s}}}_{L^A(\R^d;\R^d)}+\|D^{s_n} u_k-D^\sigma u_k\|_{L^A(\R^d;\R^d)}\\
        &\qquad\quad+C\|u-u_k\|^{\frac{\overline{s}-\sigma}{\overline{s}}}_{L^A(\R^d;\R^d)}\|D^{\overline{s}} u-D^{\overline{s}} u_k\|^{\frac{\sigma}{\overline{s}}}_{L^A(\R^d;\R^d)}\\
        &\quad\leq \frac{C}{k}+\|D^{s_n} u_k-D^\sigma u_k\|_{L^A(\R^d;\R^d)}
    \end{align*}
    which is possible because $0\leq s_n,\sigma\leq \overline{s}$. As mentioned before, from Lemma \ref{lemma:relation_classical_and_orlicz} and \cite[Theorem C.1]{brue2022AsymptoticsII},
    \begin{multline*}
        \lim_{n\to\infty}{\|D^{s_n} u_k-D^\sigma u_k\|_{L^A(\R^d)}}\\
        \leq C\lim_{n\to\infty}{(\|D^{s_m} u_m-D^\sigma u_m\|_{L^p(\R^d)}+\|D^{s_m} u_m-D^\sigma u_m\|_{L^q(\R^d)})}=0
    \end{multline*}
    for all $k\in\N$. Hence,
    \begin{align*}
        &\lim_{n\to\infty}{\|D^{s_n} f-D^\sigma f\|_{L^A(\R^d;\R^d)}}\leq  \frac{C}{k}
    \end{align*}
    which concludes the proof.
\end{proof}

\begin{proposition}\label{prop:existence_approximating_function}
    Let $0\leq \sigma\leq 1$ and let $\{s_n\}\subset[0,1]$ be such that $s_n\to \sigma$. For any $u\in \Lambda^{\sigma,A}_0(\Omega)$, there exists a sequence $\{u_n\}$ with $u_n\in \Lambda^{s_n,A}_0(\Omega)$, such that $u_n\to u$ in $L^A(\Omega)$ and $D^{s_n}u_n\to D^\sigma u$ in $L^A(\R^d;\R^d)$.
\end{proposition}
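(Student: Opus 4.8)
The plan is to produce the recovery sequence by a diagonal argument that combines the density of $C^\infty_c(\Omega)$ in $\Lambda^{\sigma,A}_0(\Omega)$ with the convergence $D^{s_n}v\to D^\sigma v$ for a fixed sufficiently regular $v$, which is exactly the content of Proposition \ref{prop:conv_frac_grad_fixed_function_stationary}. First I would fix $u\in\Lambda^{\sigma,A}_0(\Omega)$ and, using that $\Lambda^{\sigma,A}_0(\Omega)$ is by definition the completion of $C^\infty_c(\Omega)$ under $\|\cdot\|_{\Lambda^{\sigma,A}_0(\Omega)}$, pick test functions $\{v_k\}\subset C^\infty_c(\Omega)$ with
\[
    \|v_k-u\|_{L^A(\R^d)}+\|D^\sigma v_k-D^\sigma u\|_{L^A(\R^d;\R^d)}\leq \frac{1}{k}.
\]
Passing through test functions is essential here: each $v_k$ belongs to $\Lambda^{t,A}_0(\Omega)$ for every $t\in[0,1]$, in particular to $\Lambda^{\overline{s},A}_0(\Omega)$ with $\overline{s}=\sup_n s_n$, whereas $u$ itself need not lie in $\Lambda^{\overline{s},A}_0(\Omega)$ when $\overline{s}>\sigma$, since the family of spaces decreases in $s$ by Theorem \ref{thm:spaces_decrease}.

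For each fixed $k$, Proposition \ref{prop:conv_frac_grad_fixed_function_stationary} applied to $v_k\in\Lambda^{\overline{s},A}_0(\Omega)$ gives $D^{s_n}v_k\to D^\sigma v_k$ in $L^A(\R^d;\R^d)$ as $n\to\infty$. I would therefore choose a strictly increasing sequence of integers $N_1<N_2<\cdots$ with
\[
    \|D^{s_n}v_k-D^\sigma v_k\|_{L^A(\R^d;\R^d)}\leq \frac{1}{k}\qquad\text{whenever } n\geq N_k,
\]
and set $u_n:=v_k$ for $N_k\leq n<N_{k+1}$ (and $u_n:=v_1$ for $n<N_1$). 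As a test function, each $u_n$ lies in $\Lambda^{s_n,A}_0(\Omega)$, as demanded by the statement.

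It then remains to verify the two convergences. For $n$ with $N_k\leq n<N_{k+1}$ one has $\|u_n-u\|_{L^A(\Omega)}=\|v_k-u\|_{L^A(\R^d)}\leq 1/k$ and, by the triangle inequality,
\[
    \|D^{s_n}u_n-D^\sigma u\|_{L^A(\R^d;\R^d)}\leq \|D^{s_n}v_k-D^\sigma v_k\|_{L^A(\R^d;\R^d)}+\|D^\sigma v_k-D^\sigma u\|_{L^A(\R^d;\R^d)}\leq \frac{2}{k},
\]
and since $n\to\infty$ forces $k\to\infty$ both bounds tend to $0$, which finishes the argument. The only genuinely delicate step is the diagonal extraction — arranging the thresholds $N_k$ so that the single sequence $\{u_n\}$ inherits simultaneously the $L^A$-closeness of $u_n$ to $u$ (from the choice of $v_k$) and the $L^A$-closeness of $D^{s_n}u_n$ to $D^\sigma v_k$ (from Proposition \ref{prop:conv_frac_grad_fixed_function_stationary}) — but once these two ingredients are recorded no new estimate is needed, so I expect the obstacle to be one of bookkeeping rather than of substance.
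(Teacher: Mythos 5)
Your proposal is correct and follows essentially the same route as the paper's proof: approximate $u$ by test functions $v_k\in C^\infty_c(\Omega)$, apply Proposition \ref{prop:conv_frac_grad_fixed_function_stationary} to each fixed $v_k$, and extract the recovery sequence by a diagonal choice of thresholds, concluding with the triangle inequality. Your explicit remark that the test functions lie in $\Lambda^{\overline{s},A}_0(\Omega)$ (so the fixed-function convergence result applies) is a point the paper leaves implicit, but the argument is otherwise the same.
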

\begin{proof}
    Let $u\in \Lambda^{\sigma,A}_0(\Omega)$. By definition of $\Lambda^{\sigma,A}_0(\Omega)$ there exist a sequence of test functions $\{\varphi_j\}\subset C^\infty_c(\Omega)$ such that $\varphi_j\to u$ in $\Lambda^{\sigma,A}_0(\Omega)$ as $j\to\infty$. For each $m\in\N$, there exists $q_1(m)\in\N$ such that $j\geq q_1(m)$, for which
    \begin{equation*}
        \|\varphi_j-u\|_{L^A(\Omega)}+\|D^\sigma \varphi_j- D^\sigma u\|_{L^A(\R^d;\R^d)}<\frac{1}{2m}
    \end{equation*}
    For each $j\geq q_1(m)$, by Proposition \ref{prop:conv_frac_grad_fixed_function_stationary} there exists a positive integer $q_2(m,j)\geq j$ such that for all $k\geq q_2(m,j)$ and
    \begin{equation*}
        \|D^{s_k} \varphi_j- D^\sigma \varphi_j\|_{L^p(\R^d;\R^d)}\leq\frac{1}{2m}.
    \end{equation*}
    By defining the sequence
    \begin{equation*}
        u_n=\begin{cases}
            \varphi_1 & \mbox{ if } n< q(1)\\
            \varphi_{q(k)} & \mbox{ if } q(k)\leq n<q(k+1), \quad \mbox{ for } k\in \N
        \end{cases}
    \end{equation*}
    where $q(k)=q_2(k,q_1(k))$, we have that for $n\geq q_2(m)\geq q_1(m)$,
    \begin{align*}
        &\|u_n-u\|_{L^A(\Omega)}+\|D^{s_n} u_n- D^\sigma u\|_{L^A(\R^d;\R^d)}\\
        &\quad\leq \|u_n-u\|_{L^A(\Omega)}+\|D^{s_n} u_n-D^\sigma u_n\|_{L^A(\R^d;\R^d)}+\|D^\sigma u_n- D^\sigma u\|_{L^A(\R^d;\R^d)}\\
        &\quad\leq 1/m,
    \end{align*}
    which implies that $u_n\to u$ in $L^A(\Omega)$ and $D^{s_n} u_n\to D^\sigma u$ in $L^A(\R^d;\R^d)$.
\end{proof}

\subsection{Compactness results}
Regarding the compactness results that are available to the fractional generalized Sobolev-Orlicz spaces, we mention and prove two. The first is a generalization of the Rellich-Kondrachov's theorem based on \cite[Theorem 6.3.7]{harjulehto2019orlicz}.
\begin{theorem}[Compact Sobolev Embedding]\label{thm:compact_sobolev_embedding}
    Let $\Omega\subset\R^d$ be an open bounded set and $0<s<1$. Let $A\in\Phi(\R^d)$ satisfy \hyperref[def:a0]{(A0)}, \hyperref[def:a1]{(A1)}, \hyperref[def:a2]{(A2)}, \inc{p} and \dec{q} for some $1< p<q<\infty$. Then
    \begin{equation*}
        \Lambda^{s,A}_0(\Omega)\Subset L^A(\Omega)
    \end{equation*}
    where $\Subset$ stands for compact inclusion.
\end{theorem}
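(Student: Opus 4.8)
The plan is to show that the inclusion sends bounded sets to totally bounded subsets of $L^A(\Omega)$; since $C^\infty_c(\Omega)$ is dense in $\Lambda^{s,A}_0(\Omega)$ and all the estimates below are uniform, it suffices to treat a sequence $\{u_n\}\subset C^\infty_c(\Omega)$ with $\|u_n\|_{\Lambda^{s,A}_0(\Omega)}\le M$. First I would rewrite each $u_n$ via the fractional fundamental theorem of calculus \eqref{eq:fundamental_theorem_calculus}: $u_n=I_s f_n$ with $f_n:=\mathcal{R}\cdot D^s u_n$. Since $\mathcal{R}$ is a Calderón-Zygmund operator, \cite[Corollary 5.4.3]{harjulehto2019orlicz} gives $\|f_n\|_{L^A(\R^d)}\le C\|D^s u_n\|_{L^A(\R^d;\R^d)}\le CM$. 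Moreover, using $\mathcal{R}\cdot D^s=-(-\Delta)^{s/2}$ on test functions together with $\mathrm{supp}\,u_n\subset\Omega$, the same computation as in the outer estimate of Theorem \ref{thm:poincare_inequality} yields a pointwise tail bound $|f_n(y)|\le C\|u_n\|_{L^A(\Omega)}\|\chi_\Omega\|_{L^{A'}(\Omega)}\,|y|^{-d-s}$ for $|y|\ge R_0$, where $R_0$ is fixed with $\Omega\subset B_{R_0/2}$; in particular the right-hand side is bounded uniformly in $n$.

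Next I would split the Riesz potential into a near and a far part, $I_s=T^\delta_{\mathrm{near}}+T^\delta_{\mathrm{far}}$, obtained by a smooth cut-off of the kernel $c_{d,s}|z|^{s-d}$ at scale $\delta$. For the near part, exactly as in the proof of Theorem \ref{thm:poincare_inequality} (via \cite[Lemma 6.1.4]{diening2017lebesgue}) one dominates $T^\delta_{\mathrm{near}}|g|(x)\le C\delta^s\sum_{k\ge 0}2^{-sk}T_{k+k_\delta}|g|(x)$ by the dyadic averaging operators $T_j$; since these are uniformly bounded on $L^A(\R^d)$ by \cite[Theorem 4.4.3]{harjulehto2019orlicz}, summing the geometric series gives
\begin{equation*}
    \|T^\delta_{\mathrm{near}}g\|_{L^A(\R^d)}\le\frac{C\,\delta^s}{1-2^{-s}}\,\|g\|_{L^A(\R^d)},\qquad g\in L^A(\R^d),
\end{equation*}
with $C$ independent of $\delta$ and $g$ (recall $s$ is fixed). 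Hence $\|T^\delta_{\mathrm{near}}f_n\|_{L^A(\Omega)}\le C_s\,\delta^s M$ uniformly in $n$.

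For the far part I would show that, for each fixed $\delta>0$, the family $\{(T^\delta_{\mathrm{far}}f_n)|_\Omega\}_n$ is relatively compact in $L^A(\Omega)$. Writing $f_n=f_n\chi_{B_{R_0}}+f_n\chi_{B_{R_0}^c}$: on $\overline{\Omega}$ the kernel of $T^\delta_{\mathrm{far}}$ paired against $\chi_{B_{R_0}}$ is smooth with the $y$-variable confined to the bounded set $B_{R_0}$, so Hölder's inequality in $L^A$--$L^{A'}$ (as used in Theorem \ref{thm:poincare_inequality}) shows $\{T^\delta_{\mathrm{far}}(f_n\chi_{B_{R_0}})\}_n$ is uniformly bounded and equi-Lipschitz on $\overline{\Omega}$; for the remaining piece the decay $|f_n(y)|\le C|y|^{-d-s}$ and the inequality $|x-y|\gtrsim|y|$ valid for $x\in\Omega$, $|y|\ge R_0$, give uniform bounds on both the sup-norm and the Lipschitz constant of $T^\delta_{\mathrm{far}}(f_n\chi_{B_{R_0}^c})$ on $\overline{\Omega}$. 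By Arzelà--Ascoli both families are relatively compact in $C(\overline{\Omega})$, hence in $L^A(\Omega)$ because $\Omega$ is bounded and therefore $C(\overline{\Omega})\hookrightarrow L^A(\Omega)$ continuously (a consequence of the convexity of $A$ and \hyperref[def:a0]{(A0)}). Finally, given $\varepsilon>0$ I would pick $\delta$ so small that $C_s\delta^s M<\varepsilon/2$; then $\{u_n|_\Omega\}$ lies within distance $\varepsilon/2$ of the relatively compact set $\{(T^\delta_{\mathrm{far}}f_n)|_\Omega\}$, and so admits a finite $\varepsilon$-net in $L^A(\Omega)$. This makes the image of any bounded set totally bounded, and since $L^A(\Omega)$ is complete, the inclusion is compact.

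The step I expect to be the main obstacle is the near-part operator bound $\|T^\delta_{\mathrm{near}}\|_{L^A(\R^d)\to L^A(\R^d)}\lesssim\delta^s$: it is what drives the whole argument, and it hinges on carefully tracking the $\delta$-scaling in the dyadic domination of the truncated Riesz potential and on the uniform boundedness of the averaging operators $T_j$ on $L^A$ --- precisely the point at which the harmonic-analysis hypotheses \hyperref[def:a0]{(A0)}, \hyperref[def:a1]{(A1)} and \hyperref[def:a2]{(A2)} are indispensable. A secondary technical point is the uniform tail bound on $\mathcal{R}\cdot D^s u_n$, which relies on the identity $\mathcal{R}\cdot D^s=-(-\Delta)^{s/2}$ for test functions and on $u_n$ having compact support.
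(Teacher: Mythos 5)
Your proposal is correct, but it proves the theorem by a genuinely different route than the paper. The paper argues sequentially: it takes $u_m\rightharpoonup 0$ in $\Lambda^{s,A}_0(\Omega)$, writes $u_m=I_s v_m$ with $v_m=(-\Delta)^{s/2}u_m$, mollifies, and combines the Shieh--Spector type estimate $\|u^\varepsilon_m-u_m\|_{L^A(\R^d)}\leq C\varepsilon^s\|D^s u_m\|_{L^A(\R^d)}$ with pointwise convergence of $u^\varepsilon_m$ and dominated convergence, so compactness ultimately rests on weak sequential compactness of bounded sets in $\Lambda^{s,A}_0(\Omega)$. You instead prove total boundedness directly: after the same reduction $u_n=I_s(\mathcal{R}\cdot D^s u_n)$, you truncate the Riesz kernel at scale $\delta$, make the near part uniformly small in operator norm ($\lesssim\delta^s$, by exactly the dyadic-averaging argument via \cite[Lemma 6.1.4]{diening2017lebesgue} and \cite[Theorem 4.4.3]{harjulehto2019orlicz} already used in the proof of Theorem \ref{thm:poincare_inequality}, so the step you flag as the main obstacle is in fact already available in the paper), and obtain compactness of the far part from the off-support decay of $\mathcal{R}\cdot D^s u_n=-(-\Delta)^{s/2}u_n$ together with Arzel\`a--Ascoli and the embedding $C(\overline{\Omega})\hookrightarrow L^A(\Omega)$. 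Comparing the two: the paper's proof is shorter given its Poincar\'e machinery, but it needs reflexivity-type weak compactness and a bound of the form $\|v_m(\cdot-\varepsilon z)\|_{L^A(\R^d)}\leq C\|v_m\|_{L^A(\R^d)}$, a delicate point because generalized Orlicz norms are not translation invariant when $A$ depends on $x$; your kernel-truncation argument avoids both translations and weak compactness, at the price of extra bookkeeping in the far-part step (the smooth cutoff so that the far kernel is Lipschitz up to the truncation scale, the finiteness $\|\chi_{B_{R_0}}\|_{L^{A'}(\R^d)}<\infty$ coming from \hyperref[def:a0]{(A0)}, and the density reduction from $\Lambda^{s,A}_0(\Omega)$ to test functions, all of which are justified as you indicate).
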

\begin{proof}
    Let $u_m\rightharpoonup u$ in $\Lambda^{s,A}_0(\Omega)$. Without loss of generality we can assume that $u=0$. Because by definition $C^\infty_c(\Omega)$ is dense in $\Lambda^{s,A}_0(\Omega)$, then by extension $u_m=I_s v_m$ where $v_m=(-\Delta)^{s/2} u_m$. Moreover, from the fractional fundamental theorem of calculus \eqref{eq:fundamental_theorem_calculus} and from the boundedness of the Riesz transform in $L^A(\R^d)$ \cite[Corollary 5.4.3]{harjulehto2019orlicz}, the norm $\|v_m\|_{L^A(\R^d)}$ is comparable with the norm of $\|D^s u_m\|_{L^A(\R^d)}$. Let us also define $u^\varepsilon_m=\eta_\varepsilon* u_m$ where $\eta\in C^{\infty}_c(B_1)$ and $\eta_\varepsilon(x)=\frac{1}{\varepsilon^n}\eta(\frac{x}{\varepsilon})$. We study the behaviour of
    \begin{equation*}
        \|u_m\|_{L^A(\Omega)}\leq \|u^\varepsilon_m-u_m\|_{L^A(\R^d)}+\|u^\varepsilon_m\|_{L^A(\R^d)}. 
    \end{equation*}
    
    Just like in the proof of \cite[Theorem 2.2]{shieh2018on},
    \begin{equation*}
        |u^\varepsilon_m(x)-u_m(x)|=\varepsilon^s\int_{B_1}{\int_{\R^d}{\eta(y)\left|\frac{1}{|z-y|^{d-s}}-\frac{1}{|z|^{d-s}}\right||v_m(x-\varepsilon z)|}\,dz}\,dy.
    \end{equation*}
    From the properties of the norm of $L^A(\R^d)$,
    \begin{align*}
        &\|u^\varepsilon_m(x)-u_m(x)\|_{L^A(\R^d)}\\
        &\leq\varepsilon^s \int_{B(0,1)}{\eta(y)\int_{\R^d}{\left|\frac{1}{|z-y|^{d-s}}-\frac{1}{|z|^{d-s}}\right|\|v_m(\cdot-\varepsilon z)\|_{L^A(\R^d)}}\,dz}\,dy\\
        &\leq\varepsilon^s \int_{B(0,1)}{\eta(y)\int_{\R^d}{\left|\frac{1}{|z-y|^{d-s}}-\frac{1}{|z|^{d-s}}\right|}\,dz}\,dy\|v_m\|_{L^A(\R^d)}.
    \end{align*}
    Once again referencing to the proof of \cite[Theorem 2.2]{shieh2018on},
    \begin{equation*}
        \sup_{y\in B(0,1)}{\int_{\R^d}{\left|\frac{1}{|z-y|^{d-s}}-\frac{1}{|z|^{d-s}}\right|}\,dz}<\infty,
    \end{equation*}
    so there exists a positive constant $C>0$ independent of $\varepsilon$ and $m$ such that
    \begin{align*}
        &\|u^\varepsilon_m(x)-u_m(x)\|_{L^A(\R^d)}\\
        &\leq \varepsilon^s\left( \sup_{y\in B(0,1)}{\int_{\R^d}{\left|\frac{1}{|z-y|^{d-s}}-\frac{1}{|z|^{d-s}}\right|}\,dz}\right)\left(\int_{B(0,1)}{\eta(y)}\,dy\right)\|v_m\|_{L^A(\R^d)}\\
        &\leq C\varepsilon^s\|D^s u_m\|_{L^A(\R^d)}.
    \end{align*}

    On the other hand, since $u_m\rightharpoonup 0$ in $\Lambda^{s,A}_0(\Omega)$, then for any fixed $\varepsilon>0$, we have by definition of convolution $u^\varepsilon_m(x)\to 0$ as $m\to \infty$. Moreover since $|\eta_\varepsilon|\leq C\varepsilon^{-d}$ and $\mathrm{supp}{\,u^\varepsilon_m}\subset \Omega_\varepsilon=\{x\in \R^d:\, \mathrm{dist}(x,\Omega)\leq \varepsilon\}$, then by Young's and Poincaré's inequalities, as well as the uniform boundedness of $D^s u_m$ in $L^A(\R^d)$,
    \begin{multline*}
        |u^\varepsilon_m(x)|\leq 2\|u_m\|_{L^A(\Omega)}\|\eta_\varepsilon(x-\cdot)\|_{L^{A'}(\Omega)}\\
        \leq C(s,\Omega)\varepsilon^{-n} \|D^s u_m\|_{L^A(\Omega_1)} \|\chi_{\Omega_\varepsilon}\|_{L^{A'}(\Omega)}\leq C(s,\Omega)\varepsilon^{-n}.
    \end{multline*}
    From the dominated convergence theorem, this implies that $\|u^\varepsilon_m\|_{L^A(\Omega)}\to 0$ as $m\to \infty$, for any $s>0$ fixed. Consequently,
    \begin{align*}
        &\limsup_{m\to\infty}{\|u_m\|_{L^A(\Omega)}}
        =\lim_{\varepsilon}{\limsup_{m\to\infty}{\|u_m\|_{L^A(\Omega)}}}\\
        &\qquad\quad\leq\lim_{\varepsilon}{\left( \varepsilon^s\limsup_{m\to\infty}{\|D^su_m\|_{L^A(\R^d)}}+\lim_{m\to\infty}{\|u^\varepsilon_m\|_{L^A(\Omega_1)}}\right)}=C\lim_{\varepsilon\to 0}{\varepsilon^s}=0. 
    \end{align*}
\end{proof}

The second and last compactness result of this subsection is a generalization of \cite[Theorem 4.2]{bellido2020gamma} to the fractional generalized Orlicz-Sobolev framework. This result has important consequences, as one can see from Section \ref{sec:dirichlet_problem}, to the study of the continuous dependence of the solution of \eqref{eq:dirichlet_problem} with respect to the fractional parameter $s$.

\begin{theorem}\label{thm:convergence_bellido}
    Let $\Omega\subset\R^d$ be an open bounded set and $A\in\Phi(\R^d)$ satisfying \hyperref[def:a0]{(A0)}, \hyperref[def:a1]{(A1)}, \hyperref[def:a2]{(A2)}, \inc{p} and \dec{q} for some $1<p\leq q<\infty$. Consider a sequence $\{s_n\}\subset (0,1]$ with $s_n\to \sigma$ as $n\to\infty$. If for a sequence of functions $\{u_n\}$, with $u_n\in \Lambda^{s_n,A}_0(\Omega)$ for each $n$, we have $\|D^{s_n} u_{s_n}\|_{L^A(\R^d)}\leq C$ with $C$ independent of $n$, then we can extract a subsequence which we still denote by $u_n$ such that
    \begin{equation*}
        u_n\to u \mbox{ in } L^A(\Omega) \quad \mbox{ and } \quad D^{s_n} u_n\rightharpoonup D u\mbox{ in } L^A(\R^d),
    \end{equation*}
    for some function $u\in \Lambda^{\sigma,A}_0(\Omega)$.
\end{theorem}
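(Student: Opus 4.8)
The plan is to mimic the mollification argument behind Theorem \ref{thm:compact_sobolev_embedding}, now with an order $s_n$ that varies with $n$, in order to obtain the strong compactness in $L^A(\Omega)$, and then to pin down the weak $L^A$-limit of $\{D^{s_n}u_n\}$ through the duality between $D^{s}$ and $D^{s}\!\cdot$ together with the continuity in $s$ supplied by Proposition \ref{prop:conv_frac_grad_fixed_function_stationary}. Throughout I write $D^\sigma u$ for the limiting gradient (which equals $Du$ when $\sigma=1$).

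First I would record the uniform bounds and extract weak limits. Since $\sigma\in(0,1]$, we may assume $s_n\geq s_0$ for some $s_0\in(0,\sigma]$ and all $n$; the Poincaré inequality (Theorem \ref{thm:poincare_inequality}) then gives $\|u_n\|_{L^A(\Omega)}\leq C$ uniformly in $n$, and since $A,A'\in\Delta_2$ the space $L^A$ is reflexive, so along a subsequence $u_n\rightharpoonup u$ in $L^A(\Omega)$ and $D^{s_n}u_n\rightharpoonup V$ in $L^A(\R^d;\R^d)$, with $u$ (like each $u_n$) supported in $\overline\Omega$. For the strong convergence I would repeat the proof of Theorem \ref{thm:compact_sobolev_embedding}: writing $u_n=I_{s_n}v_n$ with $v_n=\mathcal{R}\cdot D^{s_n}u_n$ (so $\|v_n\|_{L^A(\R^d)}\leq C\|D^{s_n}u_n\|_{L^A}\leq C$ by the $L^A$-boundedness of the fixed Riesz transform), the estimate in that proof yields $\|\eta_\varepsilon*u_n-u_n\|_{L^A(\R^d)}\leq C_{s_n}\,\varepsilon^{s_n}\|v_n\|_{L^A}$ with $C_{s_n}=\sup_{y\in B_1}\int_{\R^d}\bigl||z-y|^{-(d-s_n)}-|z|^{-(d-s_n)}\bigr|\,dz$, and $C_{s_n}$ stays bounded for $s_n$ in the compact interval $[s_0,1]$, so $\limsup_n\|\eta_\varepsilon*u_n-u_n\|_{L^A}\leq C\varepsilon^\sigma$ for $0<\varepsilon<1$. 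For fixed $\varepsilon$ one has $\eta_\varepsilon*u_n\to\eta_\varepsilon*u$ pointwise (test $u_n\rightharpoonup u$ against $\eta_\varepsilon(x-\cdot)\in L^{A'}(\Omega)$), with a uniform dominating function $C(\varepsilon)\chi_{\Omega_\varepsilon}$, so dominated convergence and $L^q(\Omega)\hookrightarrow L^A(\Omega)$ (Lemma \ref{lemma:relation_classical_and_orlicz}) give $\eta_\varepsilon*u_n\to\eta_\varepsilon*u$ in $L^A(\Omega)$; a triangle inequality, $\limsup_n$, and then $\varepsilon\to0$ (using $\eta_\varepsilon*u\to u$ in $L^A$, valid since $A\in\Delta_2$) yield $u_n\to u$ in $L^A(\Omega)$.

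Then I would identify $V=D^\sigma u$ and place $u$ in $\Lambda^{\sigma,A}_0(\Omega)$. Testing the duality $\int_{\R^d}D^{s_n}u_n\cdot\Psi=-\int_{\R^d}u_n\,(D^{s_n}\!\cdot\Psi)$ against $\Psi\in C^\infty_c(\R^d;\R^d)$: the left side tends to $\int V\cdot\Psi$, while by Proposition \ref{prop:conv_frac_grad_fixed_function_stationary} applied to the conjugate $A'$ (which again satisfies \hyperref[def:a0]{(A0)}, \hyperref[def:a1]{(A1)}, \hyperref[def:a2]{(A2)} and is \inc{q'} and \dec{p'}, with $1<q'\leq p'<\infty$) together with Lemma \ref{lemma:relation_classical_and_orlicz}, $D^{s_n}\!\cdot\Psi\to D^\sigma\!\cdot\Psi$ in $L^{A'}(\R^d)$, so the right side tends to $-\int u\,(D^\sigma\!\cdot\Psi)$; hence $D^\sigma u=V$ in the sense of distributions, and in particular $D^\sigma u=V\in L^A(\R^d;\R^d)$. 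For the membership I would pick $\varphi_n\in C^\infty_c(\Omega)$ with $\|u_n-\varphi_n\|_{L^A}+\|D^{s_n}(u_n-\varphi_n)\|_{L^A}<1/n$, so $\varphi_n\to u$ in $L^A(\Omega)$ and $D^{s_n}\varphi_n\rightharpoonup V$, and transfer the weak convergence from $D^{s_n}$ to $D^\sigma$: along a subsequence with $s_n\geq\sigma$ this is immediate from Theorem \ref{thm:spaces_decrease}, which bounds $\|D^\sigma\varphi_n\|_{L^A}\leq C(\sigma)\|D^{s_n}\varphi_n\|_{L^A}\leq C$; when $s_n\uparrow\sigma$ one instead proves $\|D^\sigma\varphi-D^{s}\varphi\|_{L^A}\leq C\,\omega(|\sigma-s|)\bigl(\|\varphi\|_{L^A}+\|D^\sigma\varphi\|_{L^A}\bigr)$ with $\omega(0^+)=0$, by writing $(-\Delta)^{\sigma/2}-(-\Delta)^{s/2}=m_s(D)\,\bigl(\mathrm{Id}+(-\Delta)^{\sigma/2}\bigr)$ with $m_s(\xi)=(|\xi|^\sigma-|\xi|^{s})/(1+|\xi|^\sigma)$ a Calderón--Zygmund symbol whose constants vanish as $s\to\sigma$ (the Faà di Bruno analysis of Lemma \ref{lemma:mihklin_hormander_for_orlicz}), and absorbing $\|D^\sigma\varphi_n\|_{L^A}$ to get the same uniform bound. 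In either case $D^\sigma\varphi_n\rightharpoonup V$, so $[\varphi_n,D^\sigma\varphi_n]\rightharpoonup[u,V]$ in $L^A(\Omega)\times L^A(\R^d;\R^d)$, and since the closure of $\{[\varphi,D^\sigma\varphi]:\varphi\in C^\infty_c(\Omega)\}$ is convex and hence weakly closed (equivalently, applying Mazur's lemma to convex combinations of the $\varphi_n$), it follows that $u\in\Lambda^{\sigma,A}_0(\Omega)$ with $D^\sigma u=V$.

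The compactness step is a routine adaptation of Theorem \ref{thm:compact_sobolev_embedding}, the only new point being the uniform control of the constants $C_{s_n}$ and of the Poincaré constant on the compact interval $[s_0,1]$ (which is where $\sigma>0$ enters). The main obstacle is the membership step: not the distributional identity $D^\sigma u=V$, but obtaining a uniform $L^A$-bound on $\{D^\sigma\varphi_n\}$ when $s_n$ approaches $\sigma$ from below — there Theorem \ref{thm:spaces_decrease} no longer applies and one is forced to quantify the continuity in $s$ of $D^s$ on test functions in the $L^A$-topology (alternatively, this step is trivial if one invokes a characterization of $\Lambda^{\sigma,A}_0(\Omega)$ as the space of zero-extended functions with $D^\sigma u\in L^A$, at the cost of extra regularity on $\partial\Omega$).
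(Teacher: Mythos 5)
Your overall skeleton (uniform Poincaré bound, strong $L^A(\Omega)$-compactness, then identification of the weak limit by duality with $D^{s_n}\!\cdot\Psi\to D^\sigma\!\cdot\Psi$) is the same as the paper's, and the duality/identification step is fine. But your compactness step has a genuine gap exactly where the paper's proof differs from yours. You redo the mollification estimate at the varying orders $s_n$ and claim that $C_{s}=\sup_{y\in B_1}\int_{\R^d}\bigl||z-y|^{-(d-s)}-|z|^{-(d-s)}\bigr|\,dz$ is bounded for $s\in[s_0,1]$. It is not: for large $|z|$ the integrand is of order $(d-s)|z|^{-(d-s+1)}$, so $C_s\sim C/(1-s)$ as $s\to1^-$ and $C_1=\infty$; since the statement allows $s_n\in(0,1]$ and $\sigma=1$ (the case actually used in Theorem \ref{theorem:continuous_dependence_s}), the bound $\limsup_n\|\eta_\varepsilon*u_n-u_n\|_{L^A}\leq C\varepsilon^\sigma$ fails there. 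The paper avoids this entirely: it first fixes one $s_*\in(0,1)$ with $s_*<s_n$ for all $n$ (possible because $\sigma>0$), uses Theorem \ref{thm:spaces_decrease} to get a uniform bound of $\{u_n\}$ in the single space $\Lambda^{s_*,A}_0(\Omega)$, and then applies the compact embedding Theorem \ref{thm:compact_sobolev_embedding} at that fixed order $s_*<1$. Ironically you invoke Theorem \ref{thm:spaces_decrease} later, but not at the one place where it is indispensable; with that device your varying-order mollification analysis becomes unnecessary (your version is fine only when $\sigma<1$).

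The second problem is your membership argument in the case $s_n\uparrow\sigma$. The proposed estimate $\|D^\sigma\varphi-D^{s}\varphi\|_{L^A}\leq C\,\omega(|\sigma-s|)(\|\varphi\|_{L^A}+\|D^\sigma\varphi\|_{L^A})$ with $\omega(0^+)=0$ is false: the symbol $m_s(\xi)=(|\xi|^\sigma-|\xi|^{s})/(1+|\xi|^\sigma)$ satisfies $|m_s(\xi)|\approx 1-e^{-1}$ at frequencies $|\xi|\sim e^{1/(\sigma-s)}$, so already the $L^2$ operator norm, let alone the Mikhlin/Calder\'on--Zygmund constants, does not vanish as $s\to\sigma$; the Fa\`a di Bruno analysis of Lemma \ref{lemma:mihklin_hormander_for_orlicz} gives uniform boundedness, not smallness. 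More fundamentally, no uniform bound on $\|D^\sigma\varphi_n\|_{L^A}$ can follow from $\|D^{s_n}\varphi_n\|_{L^A}\leq C$ when $s_n<\sigma$ (concentrate $\varphi_n$ at frequency $R_n$ with $R_n^{\sigma-s_n}\to\infty$), so this branch of your Mazur-type argument cannot be repaired along those lines. Your treatment of the branch $s_n\geq\sigma$ (bound via Theorem \ref{thm:spaces_decrease}, identify the weak limit by duality, weak closedness of the graph) is correct, and it is fair to note that the paper itself disposes of the membership $u\in\Lambda^{\sigma,A}_0(\Omega)$ with a single ``consequently'' after establishing the distributional identity $v=D^\sigma u$; but the quantitative fix you propose for the remaining case does not work, so as written the proposal does not prove the theorem in the generality claimed.
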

\begin{proof}
    From Theorems \ref{thm:spaces_decrease} and \ref{thm:poincare_inequality}, $u_s$ is uniformly bounded in $\Lambda^{s_*,A}_0(\Omega)$ for some $0<s_*<1$ satisfying $s_*<s_n$ for each $n\in\N$. From Banach-Alaoglu's theorem and Theorem \ref{thm:compact_sobolev_embedding}, there exist a subsequence which we still denote by $u_n$ such that
    \begin{equation*}
        u_n\to u \mbox{ in } L^A(\Omega) \quad \mbox{ and } \quad D^{s_n} u_n\rightharpoonup v\mbox{ in } L^A(\R^d),
    \end{equation*}
    for some functions $u\in L^A(\Omega)$ and $v\in L^A(\R^d)$. Using the duality between $D^s$ and $\mathrm{div}_s$,  we have
    \begin{equation*}
        \int_{\R^d}{D^{s_n}u_n\cdot \varphi}\,dx=-\int_{\R^d}{u_n\mathrm{div}_{s_n}\varphi}\,dx, \quad \forall \varphi\in C^\infty_c(\R^d;\R^d).
    \end{equation*}
    Taking the limit, we obtain
    \begin{equation*}
        \int_{\R^d}{v\cdot \varphi}\,dx=-\int_{\R^d}{u\mathrm{div}_{s_n}\varphi}\,dx, \quad \forall \varphi\in C^\infty_c(\R^d;\R^d),
    \end{equation*}
    which concludes that $v=D^\sigma u$ because test functions are dense in $L^A(\R^d;\R^d)$, see \cite[Theorem 3.7.15]{harjulehto2019orlicz}, and consequently $u\in \Lambda^{\sigma,A}_0(\Omega)$. 
\end{proof}

\section{Application to the Dirichlet problem}\label{sec:dirichlet_problem}
As an application of the functional framework that was developed in the previous section, we are able to study the existence and uniqueness of solutions to
\begin{equation}\label{eq:dirichlet_problem_for_applications}
    \begin{cases}
        -D^s\cdot(a(x,D^s u)D^s u)=F_s & \mbox{ in }\Omega\\
        u=0 & \mbox{ on } \R^d\setminus \Omega,
    \end{cases}
\end{equation}
as well as their continuous dependence with respect to the fractional parameter $s$. In fact, for the existence and uniqueness we have the following result.
\begin{theorem}\label{thm:exitence_solutions}
    Let $\Omega$ be a bounded open set, $0\leq s\leq 1$, and $a$ be a measurable and bounded function in $x$ for all $r>0$ and Lipschitz continuous in $r$ for a.e. $x\in\R^d$, satisfying \eqref{eq:hypothesis_on_a} with some constants $a_-,a_+>0$. Assume also that the $\Phi$-function $A$ given by \eqref{eq:relation_between_a_and_A} also satisfies \hyperref[def:a0]{(A0)}, \hyperref[def:a1]{(A1)} and \hyperref[def:a2]{(A2)}. and that the function $F_s\in\Lambda^{-s,A'}(\Omega)$. Then, there exists a unique solution to \eqref{eq:dirichlet_problem_for_applications}. 
\end{theorem}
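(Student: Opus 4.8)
The plan is to recast \eqref{eq:dirichlet_problem_for_applications} as an operator equation in the reflexive, separable Banach space $X:=\Lambda^{s,A}_0(\Omega)$ and to solve it via the theory of monotone operators. I would define $\mathcal{A}\colon X\to X'=\Lambda^{-s,A'}(\Omega)$ by
\begin{equation*}
    \langle \mathcal{A}u,v\rangle_{X'\times X}:=\int_{\R^d}{a(x,|D^s u|)\,D^s u\cdot D^s v}\,dx ,\qquad u,v\in X ,
\end{equation*}
so that $u$ is a weak solution of \eqref{eq:dirichlet_problem_for_applications} precisely when $\mathcal{A}u=F_s$ in $X'$. The proof then reduces to checking that $\mathcal{A}$ is well defined and bounded, hemicontinuous, strictly monotone and coercive, and to applying the Minty--Browder surjectivity theorem for coercive monotone hemicontinuous operators on reflexive separable Banach spaces, strict monotonicity then giving uniqueness of the preimage of $F_s$. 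Separability of $X$ is immediate from its definition as the completion of $C^\infty_c(\Omega)$, and reflexivity follows because the map $P$ of Proposition \ref{prop:characterization_dual} embeds $X$ isometrically onto a closed subspace of $L^A(\R^d;\R^{d+1})$, which is reflexive since $A$ and $A'$ satisfy the $\Delta_2$ condition, and reflexivity passes to closed subspaces.

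Next I would establish that $\mathcal{A}$ is well defined and bounded. Differentiating \eqref{eq:relation_between_a_and_A} gives $\partial_\ell A(x,\ell)=a(x,\ell)\,\ell$, so Young's equality for the conjugate together with the inequality $\ell\,\partial_\ell A(x,\ell)\le(a_++1)A(x,\ell)$, which is \dec{a_++1}, yields the pointwise bound
\begin{equation*}
    A'\!\left(x,\,a(x,|\xi|)|\xi|\right)=|\xi|\,\partial_\ell A(x,|\xi|)-A(x,|\xi|)\le a_+\,A(x,|\xi|),\qquad \xi\in\R^d .
\end{equation*}
Integrating over $\R^d$ and using Lemma \ref{lemma:relation_norm_modular} for both $A$ and $A'$ shows that $\xi\mapsto a(x,|\xi|)\xi$ maps bounded sets of $L^A(\R^d;\R^d)$ into bounded sets of $L^{A'}(\R^d;\R^d)$; Hölder's inequality in generalized Orlicz spaces then gives $|\langle\mathcal{A}u,v\rangle|\le 2\,\|a(x,|D^su|)D^su\|_{L^{A'}}\,\|D^sv\|_{L^A}$, so $\mathcal{A}u\in X'$ and $\mathcal{A}$ carries bounded sets to bounded sets. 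Hemicontinuity of $t\mapsto\langle\mathcal{A}(u+tv),w\rangle$ would follow from the Lipschitz continuity of $a(x,\cdot)$, the continuity of $t\mapsto D^s(u+tv)$ in $L^A$, and a dominated convergence argument in which the $\Delta_2$ properties of $A$ and $A'$ (combined with the bound above and Young's inequality) supply an integrable majorant uniform for $t$ in a neighbourhood of the limit.

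Monotonicity is the structural point. Since $\nabla_\xi\big(A(x,|\xi|)\big)=a(x,|\xi|)\xi$ and $\xi\mapsto A(x,|\xi|)$ is convex, $\mathcal{A}$ is monotone; moreover \eqref{eq:pointwise_relationship_between_a_and_A} with $a_->0$ forces $a(x,\ell)>0$ for $\ell>0$, hence $\ell\mapsto a(x,\ell)\ell$ is strictly increasing and $\xi\mapsto A(x,|\xi|)$ strictly convex, which gives $\big(a(x,|\xi|)\xi-a(x,|\eta|)\eta\big)\cdot(\xi-\eta)>0$ for all $\xi\ne\eta$ by the elementary computation for radial monotone fields. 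For coercivity I would use
\begin{equation*}
    \langle\mathcal{A}u,u\rangle=\int_{\R^d}{a(x,|D^su|)|D^su|^2}\,dx\ \ge\ (a_-+1)\,J_A(D^su)\ \ge\ \tfrac12(a_-+1)\min\big\{\|D^su\|_{L^A}^{p},\|D^su\|_{L^A}^{q}\big\} ,
\end{equation*}
with $p=a_-+1>1$ and $q=a_++1$, from \eqref{eq:pointwise_relationship_between_a_and_A} and \eqref{eq:reverse_relation_norm_modular}, together with the Poincaré inequality of Theorem \ref{thm:poincare_inequality} (and its classical counterpart at $s=1$, respectively the bound $\|u\|_{L^A(\R^d)}\le C\|D^0u\|_{L^A(\R^d)}$ at $s=0$, which comes from $\mathcal{R}\cdot\mathcal{R}=-\mathrm{Id}$ and the Calder\'on--Zygmund boundedness of $\mathcal{R}$); this makes $u\mapsto\|D^su\|_{L^A(\R^d)}$ an equivalent norm on $X$, and since $p>1$ the right-hand side grows superlinearly in $\|u\|_X$. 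Minty--Browder then produces $u\in X$ with $\mathcal{A}u=F_s$, and strict monotonicity gives uniqueness: if $\mathcal{A}u_1=\mathcal{A}u_2$, testing with $u_1-u_2$ forces $D^su_1=D^su_2$ a.e.\ and hence $u_1=u_2$ by Poincaré. The step I expect to be the main obstacle is the mapping behaviour of the superposition operator $\xi\mapsto a(x,|\xi|)\xi$ between $L^A$ and the conjugate space $L^{A'}$ — converting the pointwise growth relation between $a$ and $A$ into genuine boundedness and hemicontinuity of $\mathcal{A}$ — after which the monotone-operator machinery is routine (the degenerate case $p=q$, i.e.\ $A(x,\ell)\simeq\ell^p$, being the classical one).
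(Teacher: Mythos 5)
Your proposal is correct, but it reaches existence by a different route than the paper. The paper treats \eqref{eq:dirichlet_problem_for_applications} variationally: existence is obtained by the direct method of the Calculus of Variations applied to the energy $u\mapsto\int_{\R^d}A(x,|D^su|)\,dx-\langle F_s,u\rangle$, using convexity of $A(x,\cdot)$, the two-sided growth relation \eqref{eq:pointwise_relationship_between_a_and_A} and the Poincar\'e inequality of Theorem \ref{thm:poincare_inequality}; uniqueness is then proved by strict monotonicity of $-\Delta^s_A$, written via the mean-value type integrand $J(x)=\int_0^1\bigl(a(x,|\theta_t|)+|\theta_t|\,\partial_r a(x,|\theta_t|)\bigr)\,dt>0$ with $\theta_t=tD^su+(1-t)D^sv$, which gives $\langle-\Delta^s_Au+\Delta^s_Av,u-v\rangle=\int_{\R^d}J(x)|D^su-D^sv|^2\,dx$. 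You instead run the full Minty--Browder machinery: boundedness and hemicontinuity of the superposition operator $\xi\mapsto a(x,|\xi|)\xi$ from $L^A$ to $L^{A'}$, strict monotonicity of the radial field, coercivity from \eqref{eq:pointwise_relationship_between_a_and_A}, \eqref{eq:reverse_relation_norm_modular} and Poincar\'e, plus reflexivity of $\Lambda^{s,A}_0(\Omega)$ via the isometric embedding $P$ of Proposition \ref{prop:characterization_dual}. Both arguments are sound and rest on the same structural facts; what your route buys is that it makes explicit the functional-analytic ingredients (reflexivity, the $L^A\to L^{A'}$ mapping property — which is exactly the content of the paper's Lemmas \ref{lemma:estimate_conjugate_derivative} and \ref{lemma:dual_boundedness_elliptic}, developed there for the continuous-dependence Theorem \ref{theorem:continuous_dependence_s} — hemicontinuity, coercivity) and it covers the endpoint cases $s=0,1$ explicitly, whereas the variational route is shorter because weak lower semicontinuity of a convex integral functional and coercivity are all that is needed, with no boundedness or hemicontinuity verification of the operator. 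Your pointwise conjugate estimate $A'(x,a(x,|\xi|)|\xi|)\le a_+A(x,|\xi|)$ is precisely the paper's Lemma \ref{lemma:estimate_conjugate_derivative} with $q=a_++1$, and your strict-monotonicity argument (positivity and strict monotonicity of $\ell\mapsto a(x,\ell)\ell$ from \eqref{eq:hypothesis_on_a}) is equivalent to the paper's computation with $J(x)$, so no gap remains.
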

\begin{proof}
    The existence of the solution is a simple consequence of the direct method of Calculus of Variations, since we have that $A(x,\cdot)$ is convex for a.e. $x\in\R^d$, and it satisfies 
    \begin{equation*}
        (1+a_-)A(x,\ell)\leq a(x,\ell)\ell^2\leq (1+a_+)A(x,\ell),
    \end{equation*}
    as well as the Poincaré inequality, Theorem \eqref{thm:poincare_inequality}.

    For the uniqueness, we just need to prove that  $-\Delta^s_A$ is strictly monotone. We prove the strict monotonicity using the same argument as in \cite[Theorem 3.1]{lo2024orlicz}. Indeed, by defining $\theta_t=t D^s u+(1-t)D^s v$ for any two functions $u,v,\in \Lambda^{s,A}_0(\Omega)$, and 
    \begin{equation*}
        J(x)=\int_0^1{a(x,|\theta_t|)+|\theta_t|\partial_r a(x,|\theta_t|)}\,dt>0
    \end{equation*}
    where we used \eqref{eq:hypothesis_on_a}, we obtain that
    \begin{align*}
        \langle -\Delta^s_A u-\Delta^s_A v, u-v\rangle
        &=\int_{\R^d}{(a(x,|D^s u|)D^s u-a(x, |D^s v|)D^s v)\cdot (D^su -D^s v)}\,dx\\
        &=\int_{\R^d}{J(x)|D^s u-D^s v|^2}\,dx\geq 0
    \end{align*}
    with equality only when $u=v$.
\end{proof}

For the continuous dependence of the solutions of \eqref{eq:dirichlet_problem_for_applications} with respect to the fractional parameter $s$, we need the following lemmas.

\begin{lemma}\label{lemma:estimate_conjugate_derivative}
    Let $a(x,r)$ be a measurable and bounded function in $x$ for all $r>0$ and Lipschitz continuous in $r$ for a.e. $x\in\R^d$, such that the function $A$, defined in \eqref{eq:relation_between_a_and_A}, is a generalized $\Phi$-function. If $A$ is \hyperref[def:Dec]{$(\mathrm{Dec})_q$} with $q>1$, then
    \begin{equation*}
        A'(x, a(x, r)r)\leq (q-1)A(x, r).
    \end{equation*}
\end{lemma}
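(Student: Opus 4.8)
The plan is to use that, because $a(x,\cdot)$ is Lipschitz continuous, the $\Phi$-function $\ell\mapsto A(x,\ell)$ is $C^1$ with derivative $\partial_\ell A(x,\ell)=a(x,\ell)\,\ell$, and that this derivative is nondecreasing since $A(x,\cdot)$ is convex. The proof then reduces to two independent estimates that are combined at the end; one fixes $x$ for which all hypotheses hold, notes that the inequality is trivial at $r=0$ (both sides vanish), and assumes $r>0$.

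First I would evaluate $A'(x,a(x,r)r)$ exactly by a Fenchel (Young) argument. Writing $t=a(x,r)r$, the map $\rho\mapsto \rho t-A(x,\rho)$ is concave on $[0,\infty)$ with derivative $t-\partial_\rho A(x,\rho)=t-a(x,\rho)\rho$, which vanishes at $\rho=r$; hence the supremum defining $A'(x,t)=\sup_{\rho\ge 0}\{\rho t-A(x,\rho)\}$ is attained at $\rho=r$, so
\begin{equation*}
    A'(x,a(x,r)r)=r\cdot a(x,r)r-A(x,r)=a(x,r)r^2-A(x,r).
\end{equation*}

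Second I would extract from \dec{q} the pointwise bound $a(x,\ell)\ell^2\le q\,A(x,\ell)$: since $\ell\mapsto A(x,\ell)/\ell^q$ is decreasing and differentiable on $(0,\infty)$, its derivative is nonpositive there, and multiplying
\begin{equation*}
    \frac{d}{d\ell}\frac{A(x,\ell)}{\ell^q}=\frac{\ell\,\partial_\ell A(x,\ell)-q\,A(x,\ell)}{\ell^{q+1}}\le 0
\end{equation*}
by $\ell^{q+1}>0$ and inserting $\partial_\ell A(x,\ell)=a(x,\ell)\ell$ gives the claim. Combining the two displays yields $A'(x,a(x,r)r)=a(x,r)r^2-A(x,r)\le q\,A(x,r)-A(x,r)=(q-1)A(x,r)$, as desired.

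The only delicate point is justifying the equality in the first step: the definition of the conjugate gives for free only the lower bound $A'(x,a(x,r)r)\ge a(x,r)r^2-A(x,r)$, which points the wrong way, so one really must invoke concavity of $\rho\mapsto\rho t-A(x,\rho)$ — equivalently the monotonicity of $\ell\mapsto a(x,\ell)\ell$ coming from convexity of $A(x,\cdot)$ — to conclude that $\rho=r$ is a global maximizer and hence that the identity, not just the inequality, holds.
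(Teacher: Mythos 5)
Your proof is correct and follows essentially the same route as the paper: both identify $A'(x,a(x,r)r)=a(x,r)r^2-A(x,r)$ by showing, via convexity of $A(x,\cdot)$, that the supremum in the conjugate is attained at $\rho=r$, and both obtain $a(x,r)r^2\leq qA(x,r)$ by differentiating $\ell\mapsto A(x,\ell)\ell^{-q}$ using \dec{q}. Your explicit concavity justification for the maximizer is just a slightly more detailed rendering of the paper's same argument.
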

\begin{proof}
    Following the lines of \cite[Lemma 2.9]{bonder2019fractional}, to characterize $A'(x, a(x, r)r)$, we have to study the maximum value of $h(\ell)=a(x,r)r\ell-A(x,\ell)$ for all $\ell>0$, see \eqref{eq:convex_conjugate}. Computing the derivative of $h$, it is possible to prove from the convexity of $A(x,\ell)$ with respect to $\ell$, that $r$ is the point where $h$ attains its maximum value. Then
    \begin{equation*}
        A'(x, a(x, r)r)=a(x,r)r^2-A(x,r).
    \end{equation*}
    Since $A$ is \hyperref[def:Dec]{$(\mathrm{Dec})_q$}, by differentiating $A(x,\ell)\ell^{-q}$ with respect to $\ell$, it is possible to obtain that
    \begin{equation*}
        a(x,r)r^2\leq qA(x,r),
    \end{equation*}
    which concludes the proof.
\end{proof}

\begin{lemma}\label{lemma:dual_boundedness_elliptic}
    Let $a$ be as in the previous Lemma, and assume that $a(x,r)r$ is monotone with respect to $r$. If $\{\xi_n\}_{n\in\N}$ is sequence of functions uniformly bounded in $L^A(\R^d;\R^d)$, then there exists a constant $C$ independent of $n$ such that
    \begin{equation*}
        \|a(x,|\xi_n|)\xi_n\|_{L^{A'}(\R^d;\R^d)}\leq C.
    \end{equation*}
\end{lemma}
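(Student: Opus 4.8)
The plan is to bound the $A'$-modular of $a(x,|\xi_n|)\xi_n$ in terms of the $A$-modular of $\xi_n$, and then convert both modular estimates into norm estimates. First I would note that, since $A(x,\cdot)$ given by \eqref{eq:relation_between_a_and_A} is increasing, its derivative $r\mapsto a(x,r)r$ is nonnegative, so $a(x,r)\ge 0$ and consequently
\[
    |a(x,|\xi_n(x)|)\,\xi_n(x)| = a(x,|\xi_n(x)|)\,|\xi_n(x)| \qquad\text{for a.e. } x\in\R^d.
\]

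The central step is to apply Lemma \ref{lemma:estimate_conjugate_derivative} pointwise, with $r=|\xi_n(x)|$, which gives
\[
    A'\!\big(x,\, a(x,|\xi_n(x)|)\,|\xi_n(x)|\big) \le (q-1)\,A\big(x,|\xi_n(x)|\big) \qquad\text{for a.e. } x\in\R^d.
\]
Integrating this inequality over $\R^d$ yields $J_{A'}\big(a(x,|\xi_n|)\xi_n\big)\le (q-1)\,J_A(\xi_n)$. Since $\{\xi_n\}$ is uniformly bounded in $L^A(\R^d;\R^d)$ and $A$ satisfies \dec{q} (hence the $\Delta_2$-condition), the modular $J_A(\xi_n)$ is bounded by a constant independent of $n$ --- for instance by \eqref{eq:reverse_relation_norm_modular} of Lemma \ref{lemma:relation_norm_modular}. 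Therefore $J_{A'}\big(a(x,|\xi_n|)\xi_n\big)\le C_0$ with $C_0$ independent of $n$.

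It then remains to pass from this uniform modular bound to a uniform bound of the $L^{A'}$-norm. Here I would invoke the elementary inequality, valid for any (generalized) $\Phi$-function $B$,
\[
    \|g\|_{L^B(\R^d)} \le \max\{1,\, J_B(g)\},
\]
which follows at once from $B(x,0)=0$ and the convexity of $B(x,\cdot)$: if $\lambda:=J_B(g)\ge 1$ then $J_B(g/\lambda)\le\lambda^{-1}J_B(g)\le 1$. Applying it with $B=A'$ and $g=a(x,|\xi_n|)\xi_n$ gives $\|a(x,|\xi_n|)\xi_n\|_{L^{A'}(\R^d;\R^d)}\le\max\{1,C_0\}=:C$, as claimed.

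I do not expect a serious obstacle: the statement is essentially a repackaging of Lemma \ref{lemma:estimate_conjugate_derivative} together with the norm--modular comparisons. The only point needing a little care is that Lemma \ref{lemma:relation_norm_modular} is formulated for $A$, not for $A'$; to avoid having to control the growth of $A'$ in the last step I would use the convexity inequality above rather than Lemma \ref{lemma:relation_norm_modular} applied to $A'$. The monotonicity hypothesis on $r\mapsto a(x,r)r$ enters only implicitly, through the convexity of $A$, which was already used in the proof of Lemma \ref{lemma:estimate_conjugate_derivative} to identify $A'(x,a(x,r)r)=a(x,r)r^2-A(x,r)$.
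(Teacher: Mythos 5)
Your proof is correct, but it follows a genuinely different route from the paper. You estimate the conjugate modular directly: since $a\geq 0$ (as $\partial_\ell A(x,\ell)=a(x,\ell)\ell\geq 0$), you apply Lemma \ref{lemma:estimate_conjugate_derivative} pointwise with $r=|\xi_n(x)|$, integrate to get $J_{A'}(a(x,|\xi_n|)\xi_n)\leq (q-1)J_A(\xi_n)$, bound $J_A(\xi_n)$ uniformly from the norm bound, and pass back to the $L^{A'}$-norm by the elementary convexity inequality $\|g\|_{L^{A'}}\leq\max\{1,J_{A'}(g)\}$. The paper instead argues by duality: it tests $a(x,|\xi_n|)\xi_n$ against an arbitrary $\Psi\in L^A(\R^d;\R^d)$, splits $\Psi=\xi_n-(\xi_n-\Psi)$, and uses the monotonicity of $a(x,r)r$, the Fenchel--Young inequality and Lemma \ref{lemma:estimate_conjugate_derivative} to bound the pairing by $(3q-1)J_A(\xi_n)+J_A(\Psi)$, concluding through the associate-norm characterization of $\|\cdot\|_{L^{A'}}$ and \eqref{eq:reverse_relation_norm_modular}. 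Your version is shorter, avoids the duality pairing altogether, and, as you observe, never actually needs the extra monotonicity hypothesis on $r\mapsto a(x,r)r$ beyond the convexity of $A$ already used in Lemma \ref{lemma:estimate_conjugate_derivative}; your trick of using $J_B(g/\lambda)\leq\lambda^{-1}J_B(g)$ also sidesteps having to verify growth conditions for $A'$. The paper's approach, on the other hand, produces the bound in the form in which it is used later (a uniform bound on the duality products $\int a(x,|\xi_n|)\xi_n\cdot\Psi\,dx$), which is why it is phrased through the pairing. One shared small caveat: bounding $J_A(\xi_n)$ via \eqref{eq:reverse_relation_norm_modular} formally uses \inc{p} as well as \dec{q}, which is not listed in the lemma's hypotheses but holds in the setting where the lemma is applied (and the paper's own proof invokes the same inequality); alternatively, \dec{q} alone gives $J_A(\xi_n)\leq\max\{1,\|\xi_n\|_{L^A}\}^q$ by the same scaling argument you used for $A'$.
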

\begin{proof}
    Let $\{\xi_n\}_{n\in\N}$ be a uniformly bounded sequence in $L^A(\R^d;\R^d)$ and $\Psi\in L^A(\R^d;\R^d)$. From monotonicity of $a(x,r)r$ with respect to $r$, Young's inequality, convexity of $A$ with respect to the last variable and Lemma \ref{lemma:estimate_conjugate_derivative}, we obtain
    \begin{align*}
            &\int_{\R^d}{a(x,|\xi_n|)\xi_n\cdot \Psi}\,dx\leq \int_{\R^d}{a(x,|\xi_n|)\xi_n\cdot \xi_n}\,dx-\int_{\R^d}{a(x,|\xi_n|)\xi_n\cdot (\xi_n-\Psi)}\,dx\\
            &\qquad\leq \int_{\R^d}{\left(A'(x, a(x,|\xi_n|)|\xi_n|)+A(x, |\xi_n|)\right)}\,dx\\
            &\qquad\qquad\qquad\qquad+2\int_{\R^d}{\left(A'(x, a(x, |\xi_n|)|\xi_n|)+A(x, \frac{1}{2}|\xi_n-\Psi|)\right)}\,dx\\
            &\qquad\leq (3q-2)\int_{\R^d}{A(x,|\xi_n|)}\,dx+2\int_{\R^d}{A(x, \frac{1}{2}(\xi_n-\Psi))}\,dx\\
            &\qquad \leq (3q-1)\int_{\R^d}{A(x,|\xi_n|)}\,dx+\int_{\R^d}{A(x,\Psi)}\,dx.
    \end{align*}
    The result then follows from applying \eqref{eq:reverse_relation_norm_modular}.
\end{proof}

\begin{theorem}\label{theorem:continuous_dependence_s}
    Let $\Omega$ be a bounded open set and let $a$ and $A$ satisfy the hypothesis of Theorem \ref{thm:exitence_solutions}. Consider a sequence $\{s_n\}\subset (0,1]$ such that $s_n\to \sigma\in(0,1]$ and functions $\{\boldsymbol{f}_n\}\subset L^{A'}(\R^d;\R^d)$ such that $\boldsymbol{f}_n\to \boldsymbol{f}$ in $L^{A'}(\R^d;\R^d)$. Denoting by $\{u_n\}$ the sequence of solutions $u_n\in  \Lambda^{s_n,A}_0(\Omega)$ to
    \begin{equation}\label{eq:approximating_problems}
        \int_{\R^d}{a(x, D^{s_n}u_n)D^{s_n}u_n\cdot D^{s_n} w)}\,dx=\langle F_n, w\rangle_{\Lambda^{-s_n,A'}(\Omega)\times \Lambda^{s_n,A}_0(\Omega)}\quad\forall w\in \Lambda^{s_n,A}_0(\Omega),
    \end{equation}
    where $F_n=-D^{s_n} \boldsymbol{f}_n$ in the sense of the Proposition \ref{prop:characterization_dual}, one can extract a subsequence, still denoted $\{u_n\}$ such that
    \begin{equation*}
        u_n\to u \mbox{ in } L^A(\Omega) \quad\mbox{ and } \quad D^{s_n} u_n\rightharpoonup D^\sigma u \mbox{ in } L^A(\R^d;\R^d)
    \end{equation*}
    where $u\in\Lambda^{\sigma,A}_0(\Omega)$ solves
    \begin{equation*}
        \int_{\R^d}{a(x, D^{\sigma}u_n)D^{\sigma}u\cdot D^{s\sigma} w)}\,dx=\langle F, w\rangle_{\Lambda^{-\sigma,A'}(\Omega)\times \Lambda^{\sigma,A}_0(\Omega)}\quad\forall w\in \Lambda^{\sigma,A}_0(\Omega),
    \end{equation*}
    with $F=-D^\sigma \boldsymbol{f}$ in the sense of the Proposition \ref{prop:characterization_dual}.
\end{theorem}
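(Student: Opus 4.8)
The strategy is the standard monotone-operator-plus-compactness argument for PDEs with varying fractional order, combining the uniform estimates from the previous section with a Minty-type trick. First I would establish the uniform bound on the solutions: testing \eqref{eq:approximating_problems} with $w=u_n$ gives
\begin{equation*}
    \int_{\R^d}{a(x,D^{s_n}u_n)|D^{s_n}u_n|^2}\,dx=\langle F_n,u_n\rangle=\int_{\R^d}{\boldsymbol{f}_n\cdot D^{s_n}u_n}\,dx,
\end{equation*}
and using \eqref{eq:pointwise_relationship_between_a_and_A} on the left, Young's inequality for the pairing $L^{A'}$--$L^A$ on the right, together with the convergence $\boldsymbol{f}_n\to\boldsymbol{f}$ (hence $\|\boldsymbol f_n\|_{L^{A'}}$ bounded) and Lemma \ref{lemma:relation_norm_modular}, one deduces $\|D^{s_n}u_n\|_{L^A(\R^d)}\leq C$ uniformly in $n$. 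Then Theorem \ref{thm:convergence_bellido} applies directly and yields a subsequence with $u_n\to u$ in $L^A(\Omega)$ and $D^{s_n}u_n\rightharpoonup D^\sigma u$ in $L^A(\R^d;\R^d)$, with $u\in\Lambda^{\sigma,A}_0(\Omega)$.

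Next I would identify the limit equation. By Lemma \ref{lemma:dual_boundedness_elliptic} the sequence $\boldsymbol{\zeta}_n:=a(x,|D^{s_n}u_n|)D^{s_n}u_n$ is uniformly bounded in $L^{A'}(\R^d;\R^d)$, so up to a further subsequence $\boldsymbol{\zeta}_n\rightharpoonup\boldsymbol{\zeta}$ weakly in $L^{A'}(\R^d;\R^d)$. Fix a test function $\varphi\in C^\infty_c(\Omega)$; for $n$ large $\varphi\in\Lambda^{s_n,A}_0(\Omega)$, and we may take $w=\varphi$ in \eqref{eq:approximating_problems}. On the right, $\langle F_n,\varphi\rangle=\int_{\R^d}\boldsymbol{f}_n\cdot D^{s_n}\varphi$, and since $\boldsymbol{f}_n\to\boldsymbol{f}$ strongly while $D^{s_n}\varphi\to D^\sigma\varphi$ in $L^A$ (by Proposition \ref{prop:conv_frac_grad_fixed_function_stationary} applied to the fixed test function $\varphi$), this converges to $\int_{\R^d}\boldsymbol{f}\cdot D^\sigma\varphi=\langle F,\varphi\rangle$. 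On the left, $\int_{\R^d}\boldsymbol{\zeta}_n\cdot D^{s_n}\varphi\to\int_{\R^d}\boldsymbol{\zeta}\cdot D^\sigma\varphi$ by pairing the weak convergence $\boldsymbol{\zeta}_n\rightharpoonup\boldsymbol{\zeta}$ against the strong convergence $D^{s_n}\varphi\to D^\sigma\varphi$. By density of $C^\infty_c(\Omega)$ in $\Lambda^{\sigma,A}_0(\Omega)$ we get $\int_{\R^d}\boldsymbol{\zeta}\cdot D^\sigma w\,dx=\langle F,w\rangle$ for all $w\in\Lambda^{\sigma,A}_0(\Omega)$.

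The remaining and main step is to show $\boldsymbol{\zeta}=a(x,|D^\sigma u|)D^\sigma u$ a.e., which is where the Minty--Browder monotonicity argument enters and is the real obstacle, since the operator acts through a fractional gradient whose order is itself varying. The plan is: test \eqref{eq:approximating_problems} with $w=u_n$ to get $\int_{\R^d}\boldsymbol{\zeta}_n\cdot D^{s_n}u_n=\langle F_n,u_n\rangle$; pass to the limit on the right using $u_n\to u$ in $L^A$, the representation $F_n=-D^{s_n}\boldsymbol{f}_n$ and Proposition \ref{prop:existence_approximating_function} (to produce an admissible comparison sequence converging to $u$), obtaining $\int_{\R^d}\boldsymbol{\zeta}_n\cdot D^{s_n}u_n\to\langle F,u\rangle=\int_{\R^d}\boldsymbol{\zeta}\cdot D^\sigma u$. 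Then for arbitrary $v\in\Lambda^{\sigma,A}_0(\Omega)$ pick, via Proposition \ref{prop:existence_approximating_function}, a sequence $v_n\in\Lambda^{s_n,A}_0(\Omega)$ with $D^{s_n}v_n\to D^\sigma v$ in $L^A$, and use monotonicity of the vector field $\boldsymbol{\xi}\mapsto a(x,|\boldsymbol{\xi}|)\boldsymbol{\xi}$ (which follows from \eqref{eq:hypothesis_on_a} as in the proof of Theorem \ref{thm:exitence_solutions}) to write
\begin{equation*}
    0\leq\int_{\R^d}\bigl(a(x,|D^{s_n}u_n|)D^{s_n}u_n-a(x,|D^{s_n}v_n|)D^{s_n}v_n\bigr)\cdot\bigl(D^{s_n}u_n-D^{s_n}v_n\bigr)\,dx.
\end{equation*}
Passing to the limit, using the energy convergence together with the strong convergence $D^{s_n}v_n\to D^\sigma v$ and the weak convergence $\boldsymbol{\zeta}_n\rightharpoonup\boldsymbol{\zeta}$ (and the continuity of $\boldsymbol{\xi}\mapsto a(x,|\boldsymbol{\xi}|)\boldsymbol{\xi}$ from $L^A$ to $L^{A'}$, so that $a(x,|D^{s_n}v_n|)D^{s_n}v_n\to a(x,|D^\sigma v|)D^\sigma v$ in $L^{A'}$), yields
\begin{equation*}
    0\leq\int_{\R^d}\bigl(\boldsymbol{\zeta}-a(x,|D^\sigma v|)D^\sigma v\bigr)\cdot\bigl(D^\sigma u-D^\sigma v\bigr)\,dx.
\end{equation*}
Finally, taking $v=u\pm t\,w$ with $t\downarrow 0$ (the classical Minty device) forces $\boldsymbol{\zeta}=a(x,|D^\sigma u|)D^\sigma u$, so $u$ solves the limit problem; uniqueness of that limit identifies the full sequence. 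The delicate points to check carefully are the admissibility of the comparison sequences for each $s_n$ (handled by Proposition \ref{prop:existence_approximating_function}) and the energy-convergence step $\int\boldsymbol{\zeta}_n\cdot D^{s_n}u_n\to\int\boldsymbol{\zeta}\cdot D^\sigma u$, which is the technical crux because both the integrand and the differential operator vary with $n$.
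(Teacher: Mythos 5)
Your proposal is correct in outline and rests on the same pillars as the paper's proof: the uniform energy bound obtained by testing with $u_n$ and using \eqref{eq:pointwise_relationship_between_a_and_A}, compactness through Theorem \ref{thm:convergence_bellido}, the dual bound of Lemma \ref{lemma:dual_boundedness_elliptic}, recovery sequences from Proposition \ref{prop:existence_approximating_function}, and a Minty-type monotonicity argument. The implementation of the identification step differs. You extract a weak $L^{A'}$-limit $\boldsymbol{\zeta}$ of the fluxes $a(x,|D^{s_n}u_n|)D^{s_n}u_n$, identify $\int\boldsymbol{\zeta}\cdot D^\sigma w=\langle F,w\rangle$ by pairing against fixed test functions (via Proposition \ref{prop:conv_frac_grad_fixed_function_stationary}), obtain the energy convergence from the strong--weak pairing $\int\boldsymbol{f}_n\cdot D^{s_n}u_n\to\int\boldsymbol{f}\cdot D^\sigma u$, and run the classical Minty device $v=u\pm t\,w$. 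The paper never extracts $\boldsymbol{\zeta}$: it fixes $w$, sets $u_\varepsilon=(1-\varepsilon)u+\varepsilon w$, derives a liminf inequality directly from monotonicity against $a(x,D^\sigma u_\varepsilon)D^\sigma u_\varepsilon$, divides by $\varepsilon$ and invokes only the hemicontinuity of $-\Delta^\sigma_A$, then closes with a recovery sequence $w_n$ for $w$ and the equations solved by $u_n$. Your route is viable but requires two ingredients you should justify explicitly: (i) weak sequential compactness of the bounded flux sequence in $L^{A'}(\R^d;\R^d)$, which holds here because under $(\mathrm{Inc})_p$ and $(\mathrm{Dec})_q$ with $1<p\leq q<\infty$ both $A$ and $A'$ satisfy $\Delta_2$ and the spaces are reflexive; and (ii) the continuity of $\boldsymbol{\xi}\mapsto a(x,|\boldsymbol{\xi}|)\boldsymbol{\xi}$ from $L^A$ to $L^{A'}$ along the sequences $D^{s_n}v_n\to D^\sigma v$, which is stronger than the hemicontinuity the paper uses, though it can be obtained from Lemma \ref{lemma:estimate_conjugate_derivative} together with a dominated-convergence argument at the level of the modular. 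Finally, the Minty step over comparison functions of the form $D^\sigma v$ identifies $\boldsymbol{\zeta}$ with $a(x,|D^\sigma u|)D^\sigma u$ only as functionals on $\{D^\sigma w:\,w\in\Lambda^{\sigma,A}_0(\Omega)\}$, not pointwise a.e.; that weaker identification suffices for the theorem, so state the conclusion in that form, and note that upgrading to convergence of the full sequence is not required by the statement.
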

\begin{proof}
    We start by observing that we have an uniform bound for $D^{s_n} u_n$ in $L^A(\R^d;\R^d)$ independent of $n$. Indeed, because $u_n$ is a solution of \eqref{eq:approximating_problems} and because of \eqref{eq:pointwise_relationship_between_a_and_A}, we have that for any given $\delta>0$ we have
    \begin{align*}
        &\|D^{s_n}u_n\|_{L^A(\R^d;\R^d)}\leq \int_{\R^d}{A(x, |D^{s_n}u_n|)}\,dx+1\\
        &\qquad \leq\frac{1}{1+a_-}\int_{\R^d}{a(x, |D^{s_n}u_n|)|D^{s_n}u_n|^2}\,dx+1\\
        &\qquad\leq \frac{1}{1+a_-}\langle F_n, u_n\rangle+1\\
        &\qquad\leq C_\delta\|\boldsymbol{f}_n\|_{L^{A'}(\R^d;\R^d)}+\delta\|D^{s_n}u_n\|_{L^A(\R^d;\R^d)}+1.
    \end{align*}
    Since $\sigma>0$, we can apply Theorem \ref{thm:convergence_bellido} to extract subsequences $\{s_n\}_{n\in\N}$ and $\{u_n\}_{n\in\N}$ such that
    \begin{equation*}
        u_n\to u \mbox{ in } L^A(\Omega) \quad\mbox{ and } \quad D^{s_n} u_n\rightharpoonup D^\sigma u \mbox{ in } L^A(\R^d;\R^d),
    \end{equation*}
    for some $u\in\Lambda^{\sigma,A}_0(\Omega)$.
    
    To prove that $u$ is a solution to the limit problem, let us fix a function $w\in\Lambda^{\sigma,A}_0(\Omega)$. For any $0<\varepsilon<1$, consider the function $u_\varepsilon=(1-\varepsilon)u+\varepsilon w\in \Lambda^{\sigma,A}_0(\Omega)$. Consider also, from Proposition \ref{prop:existence_approximating_function}, a sequence of functions $\{g_n\}$ with $g_n\in \Lambda^{s_n,A}_0(\Omega)$ such that $D^{s_n}g_n\to D^\sigma u$ in $L^A(\R^d;\R^s)$. From the monotonicity of $a(x,\xi)\xi$ with respect to $\xi$, it can be proved through algebraic manipulation that
    \begin{equation}\label{eq:liminf_inequality_with_epsilon_elliptic}
        \begin{aligned}
            &\varepsilon\liminf_{n\to\infty}{\int_{\R^d}{a(x, D^{s_n}u_n)D^{s_n}u_n\cdot D^\sigma (u-w)}\,dx}\\
            &\qquad\geq \varepsilon \int_{\R^d}{a(x, D^\sigma u_\varepsilon)D^\sigma u_\varepsilon\cdot D^\sigma (u-w)}\,dx\\
            &\qquad\qquad+\lim_{n\to\infty}{\int_{\R^d}{a(x, D^\sigma u_\varepsilon)D^\sigma u_\varepsilon\cdot (D^{s_n}u_n-D^\sigma u)}\,dx}\\
            &\qquad\qquad+\liminf_{n\to\infty}{\int_{\R^d}{a(x, D^{s_n}u_n)D^{s_n}u_n\cdot D^{s_n}(g_n-u_n)}\,dx}\\
            &\qquad\qquad-\lim_{n\to\infty}{\int_{\R^d}{a(x, D^{s_n}u_n)D^{s_n}u_n\cdot (D^{s_n}g_n-D^\sigma u)}\,dx}
        \end{aligned}
    \end{equation}
    Since $D^\sigma u_\varepsilon$ is uniformly bounded in $L^A(\R^d;\R^d)$ with respect to $0<\varepsilon<1$, Lemma \ref{lemma:dual_boundedness_elliptic} implies that $a(x,|D^\sigma u_\varepsilon|)D^\sigma  u_\varepsilon$ is uniformly bounded in $L^{A'}(\R^d;\R^d)$ with respect to $\varepsilon$. Therefore,
    \begin{equation*}
        \lim_{n\to\infty}{\int_0^t{\int_{\R^d}{a(x, D^\sigma u_\varepsilon)D^\sigma u_\varepsilon\cdot (D^{s_n}u_n-D^\sigma u)}\,dx}\,d\tau}=0
    \end{equation*}
    and
    \begin{equation*}
        \lim_{n\to\infty}{\int_0^t{\int_{\R^d}{a(x, D^{s_n}u_n)D^{s_n}u_n\cdot (D^{s_n}g_n-D^\sigma u)}\,dx}\,d\tau}=0.
    \end{equation*}
    because by construction $D^{s_n}u_n\rightharpoonup D^\sigma u$ in $L^A(\R^d;\R^d)$, and by definition of $g_n$ we have $D^{s_n}g_n\to D^\sigma u$ in $L^A(\R^d;\R^d)$.
    Moreover, since $u_n$ is a solution to \eqref{eq:approximating_problems} with the fractional parameter $s_n$, we have
    \begin{align*}
        &\lim_{n\to\infty}{\int_{\R^d}{a(x, D^{s_n}u_n)D^{s_n}u_n\cdot D^{s_n}(g_n-u_n)}\,dx}=\lim_{n\to\infty}\langle F_n,g_n-u_n\rangle=0.
    \end{align*}
    
    Applying these limits to \eqref{eq:liminf_inequality_with_epsilon_elliptic}, allows us to obtain
    \begin{multline*}
        \varepsilon\liminf_{n\to\infty}{\int_{\R^d}{a(x, D^{s_n}u_n)D^{s_n}u_n\cdot D^\sigma (u-w)}\,dx}\\\geq \varepsilon \int_{\R^d}{a(x, D^\sigma u_\varepsilon)D^\sigma u_\varepsilon\cdot D^\sigma (u-w)}\,dx.
    \end{multline*}
    Dividing both sides of the previous inequality by $\varepsilon$, and then taking the limit as $\varepsilon\to 0$, together with the fact that $-\Delta^\sigma_A:\Lambda^{\sigma,A}_0(\Omega)\to \Lambda^{-\sigma,A'}(\Omega)$ is hemicontinuous, we deduce that
    \begin{multline*}
        \liminf_{n\to\infty}{\int_{\R^d}{a(x, D^{s_n}u_n)D^{s_n}u_n\cdot D^\sigma(u-w)}\,dx}\\
        \geq\int_{\R^d}{a(x, D^\sigma u)D^\sigma u\cdot D^\sigma(u-w)}\,dx.
    \end{multline*}
    Since $D^{s_n} u_n\rightharpoonup D^\sigma u$ in $L^A(\R^d;\R^d)$, it is possible to obtain from algebraic manipulation that
    \begin{align*}
        &\liminf_{n\to\infty}{\int_{\R^d}{a(x, D^{s_n}u_n)D^{s_n}u_n\cdot (D^{s_n}u_n-D^\sigma w)}\,dx}\\
        &\qquad=\liminf_{n\to\infty}{\left(\int_{\R^d}{a(x, D^{s_n}u_n)D^{s_n}u_n\cdot (D^{s_n}u_n-D^\sigma u)}\,dx\right.}\\
        &\qquad\qquad\left.+\int_{\R^d}{a(x, D^{s_n}u_n)D^{s_n}u_n\cdot D^\sigma (u-w)}\,dx\right)\\
        &\qquad\geq \liminf_{n\to\infty}{\int_{\R^d}{(a(x, D^{s_n}u_n)D^{s_n}u_n-a(x, D^\sigma u)D^\sigma u)\cdot (D^{s_n}u_n-D^\sigma u)}\,dx}\\
        &\qquad\qquad+\lim_{n\to\infty}{\int_{\R^d}{a(x, D^\sigma u)D^\sigma u\cdot (D^{s_n}u_n-D^\sigma u)}\,dx}\\
        &\qquad\qquad\qquad+\liminf_{n\to\infty}{\int_{\R^d}{a(x, D^{s_n}u_n)D^{s_n}u_n\cdot D^\sigma (u-w)}\,dx}\\
        &\qquad\geq \lim_{n\to\infty}{\int_{\R^d}{a(x, D^\sigma u)D^\sigma u\cdot (D^{s_n}u_n-D^\sigma u)}\,dx}\\
        &\qquad\qquad+\liminf_{n\to\infty}{\int_{\R^d}{a(x, D^{s_n}u_n)D^{s_n}u_n\cdot D^\sigma (u-w)}\,dx}\\
        &\qquad\geq \lim_{n\to\infty}{\int_{\R^d}{a(x, D^\sigma u)D^\sigma u\cdot (D^{s_n}u_n-D^\sigma u)}\,dx}\\
        &\qquad\qquad+\int_{\R^d}{a(x, D^\sigma u)D^\sigma u\cdot D^\sigma (u-w)}\,dx\\
        &\qquad=\int_{\R^d}{a(x, D^\sigma u)D^\sigma u\cdot D^\sigma (u-w)}\,dx.
    \end{align*}
    Consequently, if we apply once again Proposition \ref{prop:existence_approximating_function} to construct a sequence of functions $\{w_n\}_{n\in\N}$ with $w_n\in\Lambda^{s_n,A}_0(\R^d;\R^d)$ such that $D^{s_n}w_n\to D^\sigma w$ in $L^A(\R^d;\R^d)$, Lemma \ref{lemma:dual_boundedness_elliptic} for the uniform boundedness of $a(x,|D^{s_n} u_n|)D^{s_n}u_n$ in $L^{A'}(\R^d;\R^d)$, and then use the fact that $u_n$ is a solution to \eqref{eq:approximating_problems}, we obtain that
    \begin{align*}
        &\int_{\R^d}{a(x, D^\sigma u)D^\sigma u\cdot D^\sigma(u-w)}\,dx\\
        &\qquad\leq \liminf_{n\to\infty}{\int_{\R^d}{a(x,D^{s_n} u_n)D^{s_n}u_n\cdot (D^{s_n}u_n-D^\sigma w)}\,dx}\\
        &\qquad=\lim_{n\to\infty}{\int_{\R^d}{a(x,D^{s_n} u_n)D^{s_n}u_n\cdot (D^{s_n}w_n-D^\sigma w)}\,dx}+\lim_{n\to\infty}{\langle F_n, u_n-w_n\rangle}\\
        &\qquad=\langle F, u-w\rangle.
    \end{align*}
    The proof then concludes from the fact that $w\in\Lambda^{\sigma,A}_0(\Omega)$ was chosen arbitrarily.
\end{proof}
\begin{remark}
    The existence and continuous dependence results obtained in  \cite{campos2023unilateral} for variational inequalities in $\Lambda^{s,p}_0(\Omega)$ can be generalized to the framework of the fractional generalized Orlicz-Sobolev spaces that we introduced in this paper.
\end{remark}

\textbf{Acknowledgments.} The authors' research was done under the PhD FCT-grant UI/BD/152276/2021.

\end{document}